\numberwithin{equation}{section}
\theoremstyle{plain}
\newtheorem{mthm}{Theorem}
\newtheorem{theorem}{Theorem}[section]
\newtheorem{lemma}[theorem]{Lemma}
\newtheorem{proposition}[theorem]{Proposition}
\newtheorem{corollary}[theorem]{Corollary}
\theoremstyle{definition}
\newtheorem{definition}[theorem]{Definition}
\newtheorem*{definition*}{Definition}
\newtheorem{remark}[theorem]{Remark}
\newtheorem{example}[theorem]{Example}
\newcommand{\C}{\mathbb{C}}\newcommand{\cC}{\mathcal{C}}
\newcommand{\bF}{\mathbb{F}}
\newcommand{\cH}{\mathcal{H}}
\newcommand{\cI}{\mathcal{I}}
\newcommand{\cJ}{\mathcal{J}}
\newcommand{\cK}{\mathcal{K}}
\newcommand{\cN}{\mathcal{N}}
\newcommand{\cP}{\mathcal{P}}
\newcommand{\bT}{\mathbb{T}}\newcommand{\cT}{\mathcal{T}}
\newcommand{\cU}{\mathcal{U}}
\newcommand{\Z}{\mathbb{Z}}\newcommand{\cZ}{\mathcal{Z}}
\newcommand{\op}{\mathtt{op}}
\newcommand{\cAP}{\mathcal{AP}}
\newcommand{\cWM}{\mathcal{WM}}
\newcommand{\cQN}{\mathcal{QN}}
\newcommand{\eps}{\varepsilon}
\newcommand{\norm}[1]{\left\|#1\right\|}
\newcommand{\abs}[1]{\left|#1\right|}
\newcommand{\ot}{\otimes}
\newcommand{\otb}{\bar{\otimes}}
\newcommand{\algot}{\otimes_{alg}}
\newcommand{\rarrow}{\rightarrow}
\newcommand{\tnorm}[1]{{\left\vert\kern-0.25ex\left\vert\kern-0.25ex\left\vert #1 
		\right\vert\kern-0.25ex\right\vert\kern-0.25ex\right\vert}}
\DeclareSymbolFont{Symbols}{OMS}{cmsy}{m}{n}
\DeclareMathSymbol{\Emptyset}{\mathord}{Symbols}{"3B}
\newcommand{\act}{\curvearrowright}
\newcommand{\cross}{\rtimes}
\newcommand{\Orb}{\operatorname{Orb}}
\newcommand{\id}{\operatorname{id}}
\renewcommand{\sp}{\operatorname{span}}
\newcommand{\Inv}{\operatorname{Inv}}
\title{On compact extensions of tracial $W^*$-dynamical systems}
\author[A. Jamneshan]{Asgar Jamneshan}
\address{University of Bonn, Mathematical Institute, Endenicher Allee 60, D-53115 Bonn}
\email{ajamnesh@math.uni-bonn.de}
\author[P. Spaas]{Pieter Spaas}
\address{Department of Mathematical Sciences, University of Copenhagen, Universitetsparken 5, DK-2100 Copenhagen \O, Denmark}
\email{pisp@math.ku.dk}
\date{\today}
\begin{document}

\begin{abstract} 
We establish several classification results for compact extensions of tracial $W^*$-dynamical systems and for relatively independent joinings thereof for actions of arbitrary discrete groups. We use these results to answer a question of Austin, Eisner, and Tao and some questions raised by Duvenhage and King. Moreover, combining our results with an earlier classification of weakly mixing extensions by Popa, we can derive  non-commutative Furstenberg--Zimmer type dichotomies on the $L^2$-level. Although in general an adequate generalization of the Furstenberg--Zimmer tower of intermediate compact extensions doesn't seem possible in the von Neumann algebraic framework, we show that there always exists a non-commutative analogue of the finer Host--Kra--Ziegler tower for any ergodic action of a countable abelian group. \end{abstract}

\maketitle

\section{Introduction and statement of the main results}

Throughout this paper, we fix a discrete group $\Gamma$ unless mentioned otherwise. 
A \emph{tracial von Neumann algebra dynamical system} or \emph{tracial $W^*$-dynamical system} is a tuple $(N,\tau,\sigma)$ where $(N,\tau)$ is a tracial von Neumann algebra and $\sigma:\Gamma\act N$ is a trace-preserving action of $\Gamma$ on $N$. In this paper, we do not assume groups to be countable nor von Neumann algebras to be separable unless stated otherwise. If $Q\subset N$ is a $\Gamma$-invariant von Neumann subalgebra, then we say that $N$ is an \emph{extension} of $Q$, and denote this dynamical inclusion by $\Gamma\act (Q\subset N)$.  

For example, consider a commutative tracial von Neumann algebra $(L^\infty(X,\mu),\int)$ where $L^\infty(X,\mu)$ is the algebra of essentially bounded complex functions on a probability space $(X,\mu)$ and the trace is given by integration $\int$. In this context, a trace-preserving action $\sigma:\Gamma\act L^\infty(X,\mu)$ is equivalently described by a measure-preserving action of $\Gamma$ on the probability algebra associated to the probability space $(X,\mu)$, and $\Gamma$-invariant von Neumann subalgebras of $L^\infty(X,\mu)$ uniquely correspond to $\Gamma$-invariant sub-$\sigma$-algebras of the associated probability algebra (e.g., see \cite[\S 7]{jt-foundational}), where a probability algebra is a measure algebra with total mass equal to 1. Structural ergodic theory studies the behavior of measure-preserving group actions on probability algebras relative to $\Gamma$-invariant subalgebras. The aim of this paper is to study the extent to which certain results in structural ergodic theory can be established for general tracial $W^*$-dynamical systems. 
More precisely, we are interested in non-commutative analogues of results in the Furstenberg--Zimmer structure theory and an important refinement of the Furstenberg--Zimmer structure theory in the case that $\Gamma$ is abelian, known as the Host--Kra--Ziegler structure theory\footnote{Furstenberg--Zimmer structure theory was originally developed in the articles \cite{furstenberg-szemeredi,zimmer1,zimmer2}. The interested reader is referred to \cite{einsiedler-ward,furstenberg-book,glasner,kerrli,tao-poincare} for textbook expositions. The Host--Kra--Ziegler structure theory was initiated in \cite{host-kra, ziegler}, see also the textbook exposition \cite{hk-book}.}. 

Roughly speaking, the Furstenberg--Zimmer structure theorem states that any probability algebra dynamical system can be decomposed into a chain of subsystems of a more algebraic nature, called the \emph{distal} part, which are either structured \emph{compact} extensions of earlier subsystems or inverse limits of such extensions, and a final chaotic \emph{weakly mixing} extension of the distal part. In terms of orbital behavior, functions in the distal part of the system have in some precise sense predictable or controlled orbits, while orbits of functions in the weakly mixing part exhibit a strong form of randomness. In the commutative world of probability algebra dynamical systems, the Furstenberg--Zimmer structure theorem is ``universally'' applicable. Indeed, it was originally established for any action of a second countable locally compact group on a countably generated probability algebra by the work of Furstenberg and Zimmer, and has recently been shown to hold for arbitrary actions of a (discrete) group on a probability algebra, independently by Edeko, Haase, and Kreidler in \cite{ehk} and the first author in \cite{jamneshan2019fz}. Furstenberg \cite{furstenberg-szemeredi} originally developed this structure theorem to prove a multiple recurrence theorem for $\mathbb{Z}$-actions, strengthening Poincar\'e's single recurrence theorem, which he famously used to give an ergodic-theoretic proof of Szemer\'edi's theorem \cite{szemeredi}. By analogy with the averages in the classical mean ergodic theorem of von Neumann used for single recurrence, non-conventional ergodic averages correspond to multiple recurrence. In general, it is a difficult problem to prove convergence of non-conventional ergodic averages and to determine the limit. In a breakthrough, Host and Kra \cite{host-kra} and independently Ziegler \cite{ziegler} proved $L^2$-convergence for ergodic $\mathbb{Z}$-actions by classifying the subsystems which govern the limiting behavior. These subsystems are special types of compact extensions, and their deep insight is that these compact extensions can be classified as inverse limits of rotations on nilmanifolds, see \cite{hk-book} for a comprehensive exposition. This refinement of the Furstenberg--Zimmer structure theory is known as the Host--Kra--Ziegler structure theory and has been influential in areas such as combinatorics and number theory, e.g., see \cite{green-tao,gtz}. 

In recent decades, there has been a surge of interest in non-commutative analogues of results in Furstenberg--Zimmer structure theory, character theory, and related topics such as multiple recurrence and convergence of non-conventional ergodic averages. The motivation seems to have been diverse, for example stemming from structure theory of von Neumann algebras (see, e.g., \cite{popa,ioana-arxiv,ioana,chifan-das}), Lie theory and higher rank lattices (see, e.g., \cite{bout1,bout}), non-commutative analogues of results in Furstenberg--Zimmer structure theory and the theory of joinings (see, e.g., \cite{joining-1,joining-2,joining-3,skew-product-1,skew-product-2,duvenhage-joining,duvenhage-joining2,duvenhage-joining3,duvenhage-rel-compact,duvenhage-rel-ergodic,duvenhage-rel-mixing,groh,kerrli,automorphism-quantum,olesen,runde,viselter}), and non-commutative multiple recurrence and convergence of non-conventional ergodic averages (see, e.g., \cite{austin,duvenhage-bergelson,szemeredi-wstar,eisner,fidalea-diagonal,fidaleo-entangled,fidaleo-nonconventional,fidaleo-periodic,niculescu,debeer}).

This paper contributes to this development by providing a comprehensive treatment of compact extensions within the framework of tracial $W^*$-dynamical systems. We now state and discuss our main results and their relation to the existing literature. 

\subsection*{Main results}

Let $\Gamma\act (N,\tau)$ be a tracial $W^*$-dynamical system, and let $Q\subset N$ be a $\Gamma$-invariant von Neumann subalgebra.
We say that $N$ is a \emph{compact extension} of $Q$ if every element of $N$ is \textit{almost periodic relative to $Q$}, that is, its orbit is conditionally precompact, which roughly means that for every $\eps>0$, it is contained within the $\eps$-neighborhood of a finitely generated $Q$-module zonotope of $L^2(N)$ (see Section~\ref{sec-compact} for the precise definition). Our first main result establishes  equivalent characterizations of a compact extension, by analogy with such characterizations in the measure-theoretic framework  (cf. \cite[\S 6.3]{furstenberg-book}, \cite[Theorem 4.1]{jamneshan2019fz}). It connects the above definition using relatively almost periodic elements with conditional Hilbert--Schmidt operators (item (2) below, see Subsection~\ref{ssec:HS} for the definitions), and finitely generated modules (item (3) below, see also Subsection~\ref{ssec:modules}). 

\begin{restatable}{mthm}{compchar}
\label{thm:compchar}
Let $\Gamma\act (N,\tau)$ be a tracial $W^*$-dynamical system, and let $Q\subset N$ be a $\Gamma$-invariant von Neumann subalgebra. 
Then the following are equivalent. 
	\begin{enumerate}
		\item $N$ is a compact extension of $Q$.
		\item $\mathrm{span}\{K\ast_Q\xi\mid K\in HS_Q\; \Gamma\text{-equivariant}, \xi\in L^2(N)\}$ is dense in $L^2(N)$.
		\item $L^2(N)$ is the closure of the union of the $Q$-finitely generated $\Gamma$-invariant right (or left) $Q$-submodules.
	\end{enumerate}
\end{restatable}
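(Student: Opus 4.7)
The plan is to establish the cyclic implications $(iii)\Rightarrow(ii)\Rightarrow(i)\Rightarrow(iii)$. The unifying tool is the conditional Hilbert--Schmidt space $\chs=L^2(N)\ot_Q L^2(N)$, whose elements act on $L^2(N)$ as right $Q$-linear conditionally Hilbert--Schmidt operators via $K\mapsto(\xi\mapsto K\ast_Q\xi)$, with $\Gamma$-invariance of $K$ equivalent to $\Gamma$-equivariance of the operator, and with the $L^2$-adjoint of $K\ast_Q(\cdot)$ again convolution by a $\Gamma$-invariant element of $\chs$. Two further facts are crucial: a closed right $Q$-submodule $\cM\sub L^2(N)$ is $Q$-finitely generated if and only if the orthogonal projection onto $\cM$ is implemented by some $K_\cM\in\chs$, and any conditional Hilbert--Schmidt operator sends bounded subsets of $L^2(N)$ to conditionally precompact ones.

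The implication $(iii)\Rightarrow(ii)$ is then immediate: to each $Q$-finitely generated $\Gamma$-invariant right $Q$-submodule $\cM$ one attaches the projection $P_\cM=K_\cM\ast_Q(\cdot)$ with $K_\cM\in\chs$ automatically $\Gamma$-invariant, so $\cM\sub\{K_\cM\ast_Q\xi:\xi\in L^2(N)\}$ and the union in $(iii)$ lies inside the span of $(ii)$. For $(ii)\Rightarrow(i)$: if $K\in\chs$ is $\Gamma$-invariant and $\xi\in L^2(N)$, then $\sigma_g(K\ast_Q\xi)=K\ast_Q\sigma_g(\xi)$, so the orbit of $K\ast_Q\xi$ is the image of the bounded orbit $\{\sigma_g\xi\}_g$ under the conditional Hilbert--Schmidt operator $K\ast_Q(\cdot)$, hence conditionally precompact, making $K\ast_Q\xi$ almost periodic relative to $Q$. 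The set of relatively almost periodic vectors is a closed right $Q$-submodule of $L^2(N)$, so the density in $(ii)$ forces it to equal $L^2(N)$, and in particular to contain $N$.

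The bulk of the argument is $(i)\Rightarrow(iii)$. Let $\cW$ be the closed linear span of all $Q$-finitely generated $\Gamma$-invariant right $Q$-submodules of $L^2(N)$; it is itself a closed $\Gamma$-invariant right $Q$-submodule. Suppose $\cW\neq L^2(N)$ and pick nonzero $\eta\in N\cap\cW^\perp$, which is almost periodic by $(i)$. The goal is to build a $\Gamma$-invariant $K\in\chs$ with $K\ast_Q\eta\neq 0$. Granting this, $T:=K\ast_Q(\cdot)$ and its $L^2$-adjoint $T^{*}$ are both $\Gamma$-equivariant conditional Hilbert--Schmidt operators, so $\im(T)$ and $\im(T^{*})$ are $Q$-finitely generated $\Gamma$-invariant right $Q$-submodules, both contained in $\cW$, whence $T^{*}T\eta\in\cW$. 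Then
\[
0<\norm{T\eta}^2=\langle\eta,T^{*}T\eta\rangle=0
\]
by $\eta\in\cW^\perp$, a contradiction.

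The delicate step, and the main obstacle, is the construction of such a $K$. For a possibly non-amenable $\Gamma$ one cannot simply average the rank-one kernel $\eta\ot_Q\eta^{*}$ over the group. The idea is that conditional precompactness of the orbit $\{\sigma_g\eta\}$ in $L^2(N)$ propagates, up to a controllable zonotope $\eps$-error, to norm-precompactness of $\{\sigma_g(\eta\ot_Q\eta^{*})\}$ in $\chs$; an Alaoglu--Birkhoff-style fixed-point argument applied to the induced unitary $\Gamma$-representation on $\chs$ then furnishes a $\Gamma$-fixed point $K$ in the closed convex hull of this orbit. The nonvanishing $K\ast_Q\eta\neq 0$ should follow from a convexity argument, using that the distinguished orbit element $(\eta\ot_Q\eta^{*})\ast_Q\eta$ is a nonzero positive $Q$-multiple of $\eta$, and hence controls the lower-semicontinuous functional $\|\cdot\ast_Q\eta\|$ on the orbit away from zero.
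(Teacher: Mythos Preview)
Your reversed cycle $(iii)\Rightarrow(ii)\Rightarrow(i)$ is essentially sound; the paper runs the opposite direction $(1)\Rightarrow(2)\Rightarrow(3)\Rightarrow(1)$. One minor correction: in $(ii)\Rightarrow(i)$ you should take $\xi\in N$ rather than $L^2(N)$, so that the $Q$-coefficients $\langle\xi_i,\sigma_g\xi\rangle_Q$ coming out of Proposition~\ref{prop:nc:CHS} are operator-norm bounded as required by the zonotope definition; then invoke closedness of $\cAP^2_{Q,N}$. Also, your ``iff'' between finite generation of $\cM$ and $P_\cM\in\chs$ is false in the reverse direction (finite lifted trace does not imply finite generation; see the discussion before Lemma~\ref{lem-aet}), though you only use the forward direction in $(iii)\Rightarrow(ii)$.

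The implication $(i)\Rightarrow(iii)$ has real gaps. First, a conditional Hilbert--Schmidt operator does \emph{not} have $Q$-finitely generated image---already false for $Q=\C$, where Hilbert--Schmidt operators can have infinite rank. What you actually need is that $\overline{\im(T)}$ lies in $\cW$, i.e., is a closure of a union of $Q$-finitely generated $\Gamma$-invariant submodules; this comes from the spectral projections $1_{[\eps,\infty)}(|K|)$ and is precisely the paper's $(2)\Rightarrow(3)$ argument, which you would have to inline. Second, norm-precompactness of $\{\sigma_g(\eta\ot_Q\eta^*)\}$ in $\chs$ is both false (the middle coefficients $\kappa_i(g)\kappa_j(g)^*$ range over an infinite-dimensional set) and unnecessary: the $\Gamma$-fixed $K$ is simply the unique minimal-norm element of the closed convex hull of the orbit in the Hilbert space $\chs$. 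Third, your nonvanishing argument does not work as stated: the functional $\|\cdot\ast_Q\eta\|$ is not bounded below on the orbit. The correct computation is $\langle\eta,K\ast_Q\eta\rangle=\langle K,\eta\ot_Q\eta^*\rangle=\|K\|^2$, so once you know $K\ast_Q\eta\in\cW\perp\eta$ you are forced to $K=0$. At that point you still need to show that $K=0$ together with almost periodicity of $\eta$ implies $\eta=0$, and this step---the actual place where hypothesis $(i)$ enters---is missing from your outline. It is the substantive content of the paper's $(1)\Rightarrow(2)$: from a sequence $K_n\to 0$ in the convex hull one extracts $\gamma\in\Gamma$ with all $\|E_Q(\sigma_\gamma(\eta)^*\eta_i)\|_2$ small, and combining this with the zonotope approximation $\sigma_\gamma(\eta)\approx\sum_i\eta_i\kappa_i(\gamma)$ bounds $\|E_Q(\eta^*\eta)\|_2$ by a multiple of $\eps$.

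In summary, your $(i)\Rightarrow(iii)$ is morally the paper's $(1)\Rightarrow(2)$ and $(2)\Rightarrow(3)$ fused into one step, but the two key mechanisms---the spectral decomposition controlling the range, and the almost-periodicity argument forcing $\eta=0$ from $K=0$---are respectively misstated and omitted.
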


Different notions of relatively almost periodic elements and of compact extensions for $W^*$-dynamical systems have appeared in the literature. For example, our definition of relatively almost periodic elements (Definition \ref{def:rap}) is used in this form in \cite{chifan-das}, where it is used to establish classification results for intermediate von Neumann subalgebras, and it was already indicated in \cite[Remark 3.16]{kerrli} (which in turn was inspired by the operator-theoretic conceptualization in the commutative case in \cite[\S 2.13]{tao-poincare}). The relation with conditional Hilbert--Schmidt operators is written out for commutative von Neumann algebras in \cite[Appendix C]{kerrli} (see also Section~\ref{ssec:HS} below), and it is remarked there that a similar relation should hold for general von Neumann algebras. A definition of compact extensions in terms of finitely generated submodules (see item (iii) above) appears in \cite[\S 4]{austin} and a similar definition is given in \cite[\S 3]{duvenhage-rel-compact}. Nevertheless, even though parts of Theorem \ref{thm:compchar} were probably known to experts, a more comprehensive treatment of relative compactness similar to the commutative case, including establishing the equivalence of these definitions, seems to be missing in the literature to the best of our knowledge. 

In previous functional-analytic approaches to Furstenberg--Zimmer structure theory, in particular the part of the theory dealing with compact extensions, an important role is usually played by the so-called ``conditional $L^2$-spaces'', see \cite{tao-poincare,kerrli,jamneshan2019fz,ehk} where these spaces also go by the names of ``conditional Hilbert modules'' or ``Hilbert--Kaplansky modules''. The proof of Theorem \ref{thm:compchar} in Section \ref{sec-compact}, shows that one can avoid introducing such conditional spaces, and we work directly on the usual $L^2$-spaces. 
Finally, we also note that Theorem \ref{thm:compchar} is established for arbitrary discrete groups acting on general von Neumann algebras, and hence our results add to most known partial results by removing commutativity and countability/separability assumptions.

We use our classification of compact extensions from Theorem \ref{thm:compchar} to answer \cite[Question~4.3]{austin} by Austin, Eisner, and Tao about finite dimensional approximations of $Q$-submodules of finite lifted trace, see  Lemma \ref{lem-aet}.  We also answer two questions raised by Duvenhage and King in \cite[\S 6]{duvenhage-rel-compact} about the classification of intermediate subsystems of compact extensions and characterizations of ``generalized eigenfunctions'', see Remark \ref{rem-duvenhage}.   

\subsection*{Addendum} After the first version of our paper appeared on the arXiv, the authors learned\footnote{We thank Terry Tao for bringing this preprint to our attention.} of the unpublished preprint \cite{Jolissaint}, in which Question~4.3 by Austin, Eisner, and Tao from \cite{austin} is also answered with a direct hands-on proof. Our proof is different, and our answer to the question follows immediately from a more general classification result, which we believe puts it in an appropriate framework.

Having characterized compact extensions, a next natural step, as in classical Furstenberg--Zimmer theory, is to try to study the maximal compact extension inside an arbitrary extension. It is easy to verify that the space of conditionally almost periodic elements in $L^2(N)$ is always a closed Hilbert subspace. Thus a natural candidate for the maximal compact extension of $Q$ inside $N$ is the intersection of this closed Hilbert subspace with $N$. In commutative ergodic theory, this intersection corresponds to a $\Gamma$-invariant von Neumann subalgebra of $L^\infty(X,\mu)$. However, for general $W^*$-dynamical systems this intersection is only a strong operator topology closed linear subspace of $N$ that may not be closed under multiplication and/or involution, that is, it may not be a von Neumann subalgebra. This was observed by Austin, Eisner and Tao in \cite[Example 4.2]{austin}, and we recall their example in Section \ref{sec-compact}. Given this obstruction, one could instead try to consider the von Neumann algebra generated by the relatively almost periodic elements as suggested in \cite[Remark 3.16]{kerrli}, see also \cite{chifan-peterson}. However, the example of Austin, Eisner, and Tao shows that this von Neumann subalgebra can contain all relatively weakly mixing elements. It thus seems that this is not a satisfactory solution in general. 

Nevertheless, under additional hypotheses on the inclusion $Q\subset N$, it is possible to identify the space of relatively almost periodic elements with a von Neumann subalgebra. A natural such hypothesis is quasi-regularity of the inclusion which has been put forward by Chifan and Peterson in \cite{chifan-peterson}. We recall its definition and provide a proof as to its effect in Section \ref{sec-compact} for the convenience of the reader. 

Quasi-normalizers had been connected to Furstenberg--Zimmer structure theory before in \cite[\S 6]{ioana-arxiv}, where Ioana shows that for an ergodic action $\Gamma\act (X,\mu)$ of a countable discrete group on a standard probability space $(X,\mu)$, the quasi-normalizer of $L(\Gamma)$ inside $L^\infty(X)\cross \Gamma$ is exactly $L^\infty(Y)\cross\Gamma$, where $Y$ is the maximal compact subsystem of $X$. This can be used to give a purely von Neumann algebraic description of the Furstenberg--Zimmer tower in the commutative setting (see \cite{chifan-peterson}, see also \cite[Theorem 2.3.]{chifan-das}). Further connections with the quasi-normalizers of the involved algebras in the $W^*$-dynamical setting are also explored in \cite{chifan-peterson}. 

Alternatively, if one does not insist on the maximality of the compact extension, one could ask if there are other natural intermediate compact extensions which correspond to von Neumann subalgebras. In the commutative context, a prominent and useful example of a tower of intermediate compact extensions is formed by the Host--Kra--Ziegler factors \cite{host-kra,ziegler}, when one considers actions of abelian groups. We will show in Section \ref{sec-hk} that there is a natural non-commutative analogue of these factors for abelian group actions, leading to a tower of von Neumann subalgebras corresponding to intermediate compact extensions (without additional hypotheses on the subalgebras).

Before that, we study relatively independent joinings of tracial $W^*$-dynamical systems in Section \ref{sec:joingings}. In particular, we classify the invariant elements of relatively independent joinings  (see Proposition \ref{prop:invJoin}). Moreover, we extend an important theorem of Furstenberg \cite[Theorem~7.1]{furstenberg-szemeredi} on relatively independent joinings of compact extensions over a common factor (see also \cite[Theorem~9.21]{glasner}) to $W^*$-dynamical central extensions as follows. 

\begin{mthm}
\label{thm:RelProd}
Let $(N_1,\tau_1,\sigma_1)$ and $(N_2,\tau_2,\sigma_2)$ be $W^*$-dynamical $\Gamma$-systems and let $Q$ be a common $\Gamma$-invariant von Neumann subalgebra contained in their respective centers where the actions on $Q$ agree. For $i=1,2$, denote by $P_i\subset N_i$ the maximal compact extension of $Q$ inside $N_i$. 
Consider the extension $\Gamma\act (Q\subset N_1\otb_Q N_2^\op)$, where $N_1\otb_Q N_2^\op$ denotes the relatively independent product of $N_1,N_2$ over $Q$, see Definition \ref{def-relprod}. Then the maximal compact extension $P$ of $Q$ inside $N_1\otb_Q N_2^\op$ is given by $P=P_1\otb_Q P_2^\op$. 
\end{mthm}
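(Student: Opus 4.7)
The plan is to prove the two inclusions $P_1 \otb_Q P_2^{\op} \subseteq P$ and $P \subseteq P_1 \otb_Q P_2^{\op}$ separately, using the finitely generated module characterization of compact extensions from Theorem~\ref{thm:compchar}(iii) for the first and a Furstenberg-style dichotomy argument at the $L^2$-level for the second.

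For the forward inclusion, the goal is to verify that $P_1 \otb_Q P_2^{\op}$ is itself a compact extension of $Q$. Since $Q$ is central in each $N_i$, the relative tensor product $L^2(P_1) \ot_Q L^2(P_2^{\op})$ carries a well-defined right $Q$-module structure. Given $\Gamma$-invariant finitely generated right $Q$-submodules $M_i \subseteq L^2(P_i)$, the relative tensor $M_1 \ot_Q M_2 \subseteq L^2(P_1 \otb_Q P_2^{\op})$ is again a $\Gamma$-invariant finitely generated right $Q$-submodule, whose generators are the elementary tensors of those of $M_1$ and $M_2$. Applying Theorem~\ref{thm:compchar}(iii) to each $P_i$, the union of all such $M_1 \ot_Q M_2$ is dense in $L^2(P_1 \otb_Q P_2^{\op})$, and another application of Theorem~\ref{thm:compchar}(iii) yields that $P_1 \otb_Q P_2^{\op}$ is a compact extension of $Q$, hence contained in $P$.

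For the reverse inclusion, the idea is to decompose $L^2(N_i) = L^2(P_i) \oplus W_i$, where $W_i$ denotes the orthogonal complement of the space of conditionally almost periodic vectors over $Q$ inside $L^2(N_i)$. This yields an orthogonal decomposition
\[
L^2(N_1) \ot_Q L^2(N_2^{\op}) = \bigl(L^2(P_1) \ot_Q L^2(P_2^{\op})\bigr) \oplus R,
\]
where $R = (W_1 \ot_Q L^2(N_2^{\op})) + (L^2(N_1) \ot_Q W_2)$. The key claim is that $R$ contains no nonzero conditionally almost periodic vector. Granting this, any $x \in P$ lies in $L^2(P_1) \ot_Q L^2(P_2^{\op}) \cap (N_1 \otb_Q N_2^{\op}) = P_1 \otb_Q P_2^{\op}$, completing the argument.

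The main obstacle is precisely this key claim: that a relatively weakly mixing component in either tensor slot obstructs conditional almost periodicity in the joining. The natural approach uses the conditional Hilbert--Schmidt characterization from Theorem~\ref{thm:compchar}(ii): every conditionally almost periodic vector is of the form $K \ast_Q \xi$ for some $\Gamma$-invariant kernel $K \in L^2(N) \ot_Q L^2(N)$ with $N = N_1 \otb_Q N_2^{\op}$. Using the centrality of $Q$, such invariant kernels should decompose into tensor combinations of kernels on the individual factors, and combining the classification of invariant vectors of relatively independent joinings from Proposition~\ref{prop:invJoin} with the defining property of $W_i$ should force the pairing of such a kernel with a vector from $R$ to vanish. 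Verifying this decomposition and the vanishing cleanly in the non-commutative setting is where the bulk of the technical work would lie, and it is the step that directly parallels (and generalizes) Furstenberg's classical orthogonality argument for joinings of compact extensions.
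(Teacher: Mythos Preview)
Your overall scaffolding---the forward inclusion via Theorem~\ref{thm:compchar}(iii) and the reduction of the reverse inclusion to showing that the complement $R$ carries no conditionally almost periodic vector---is sound and matches the paper's setup. The forward inclusion is treated more explicitly in your write-up than in the paper, which simply declares it ``easy to see''; your argument there is fine.

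The genuine gap is in your proposed mechanism for the key step. Your suggestion that a $\Gamma$-invariant kernel $K\in L^2(N)\ot_Q L^2(N)$, with $N=N_1\otb_Q N_2^{\op}$, ``should decompose into tensor combinations of kernels on the individual factors'' is either trivially true (any element of the four-fold tensor is a sum of elementary tensors, none of which need be invariant) or false (an invariant element need not be a sum of tensors of \emph{invariant} factor-kernels). Invoking Proposition~\ref{prop:invJoin} does not rescue this: applying it to the rearranged tensor $(L^2(N_1)\ot_Q L^2(N_1))\ot_Q (L^2(N_2)\ot_Q L^2(N_2))$ would require knowing $\cAP^2_{Q,\,N_i\otb_Q N_i^{\op}}$, which is precisely the content of the theorem you are proving---so the argument becomes circular.

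What the paper actually does (following Furstenberg) is quite different from a decomposition of $K$. One starts with $f$ lying in a $\Gamma$-invariant finitely generated $Q$-module $V\subset L^2(N_1)\ot_Q L^2(N_2)$ with $f\perp L^2(P_1)\ot_Q L^2(P_2)$, takes a $Q$-orthonormal basis $f_1,\dots,f_r$ of $V$, and then \emph{contracts over the $N_2$-variable} to build
\[
\psi \coloneqq (\id\ot\id\ot E_Q)\Bigl(\sum_i \iota_{13}(f_i)\iota_{23}(f_i^*)\Bigr)\in L^2(N_1)\ot_Q L^2(N_1).
\]
The unitarity of the transition matrix of the $\Gamma$-action on the basis $(f_i)$ makes $\psi$ itself $\Gamma$-invariant. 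Hence $\psi$ is an invariant conditional Hilbert--Schmidt operator on $L^2(N_1)$ alone, and its range lies in $L^2(P_1)$. One then pairs back with the $f_i$ and uses the assumed orthogonality to $L^2(P_1)\ot_Q L^2(P_2)$ to force $\psi=0$, from which $f=0$ follows by a trace computation. This contraction-to-one-factor trick is the essential idea you are missing; it does not arise from decomposing $K$, and Proposition~\ref{prop:invJoin} plays no role in the paper's proof of this theorem.
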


Several of the results developed so far are used in the proof of Theorem \ref{thm:RelProd}, in particular the characterization using conditional Hilbert--Schmidt operators from Theorem~\ref{thm:compchar}(2). Although our proof is inspired by Furstenberg's original proof, we provide a purely algebraic and streamlined version of the proof. In particular, our proof also works without any additional countability/separability assumptions.

Combining our results on relatively independent joinings and compact extensions with Popa's classification of weakly mixing extensions \cite[\S 2]{popa}, we analyse the dichotomy between relatively weakly mixing and relatively compact $W^*$-dynamical systems in Section~\ref{sec-dichotomy}. We prove a relative dichotomy on the $L^2$-level for arbitrary extensions of $W^*$-dynamical systems which can be reduced to the von Neumann algebra level if the inclusion is quasi-regular. 
By the previously mentioned example of Austin, Eisner, and Tao we cannot expect a relative dichotomy to hold on the von Neumann algebra level without further assumptions on the inclusion. Moreover, quasi-regularity is not necessarily preserved under taking the maximal compact extension (see Remark \ref{ex:qreg}). 
These are therefore serious obstructions to establishing a complete analogue of the Furstenberg--Zimmer structure theorem for general $W^*$-dynamical systems, beyond the aforementioned dichotomies. Nevertheless, as mentioned earlier, some alternative results in this direction, exploiting the quasi-normalizers of the involved algebras, are established in \cite{chifan-peterson}.

We continue Section~\ref{sec-dichotomy} by extending a classical characterization of weakly mixing extensions in terms of joinings to the von Neumann algebraic framework. Namely, under the assumption of quasi-regularity, we characterize weakly mixing extensions of $W^*$-dynamical systems as being those that are disjoint from compact extensions, see Proposition~\ref{thm-disjoint}. We point out that our positive results in this Section generalize, refine, and provide new proofs of related results in \cite{joining-1,duvenhage-joining2,groh,olesen}.  

Finally, assuming now that $\Gamma$ is countable and abelian, we develop a non-commutative version of the Host--Kra cubic systems and the Gowers--Host--Kra seminorms for ergodic $W^*$-dynamical systems in Section \ref{sec-hk}. Using our previous results on joinings, we establish the following non-commutative Host--Kra--Ziegler tower of compact extensions:  

\begin{mthm}\label{thm-hk}
Let $\Gamma$ be a countable discrete abelian group, $(N,\tau)$ a tracial von Neumann algebra with separable predual, and let $(N,\tau,\sigma)$ be an ergodic $W^*$-dynamical $\Gamma$-system.  
Then there exists an increasing sequence $Z_1\subset Z_2\subset \ldots \subset N$ of inclusions of $W^*$-systems with the following properties. 
\begin{itemize}
    \item[(i)]  $Z_1$ is the maximal compact $W^*$-subsystem of $N$. 
    \item[(ii)] For each $n\geq 1$, $Z_{n+1}$ is a compact extension of $Z_n$. 
    \item[(iii)] For each $n\geq 1$, $x\in Z_n$ if and only if $\tnorm{x}_{n+1}=0$, where $\tnorm{\cdot}_{n+1}$ denotes the non-commutative version of the Gowers--Host--Kra uniformity seminorms, see Definition \ref{def-seminorm}. 
\end{itemize}
\end{mthm}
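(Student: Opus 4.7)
The plan is to adapt the Host–Kra–Ziegler construction of the nilfactor tower to the tracial $W^*$-setting, combining the characterization of compact extensions (Theorem~\ref{thm:compchar}), the joining theorem (Theorem~\ref{thm:RelProd}), and the relative $L^2$-dichotomy from Section~\ref{sec-dichotomy}.

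First, I would inductively define the cubic $W^*$-systems $N^{[k]}$ by $N^{[0]} := N$ and $N^{[k+1]} := N^{[k]} \otb_{Q_k} (N^{[k]})^{\op}$, where $Q_k := (N^{[k]})^\Gamma$ denotes the $\Gamma$-invariants for the diagonal action on $N^{[k]}$. Ergodicity yields $Q_0 = \C$, and Proposition~\ref{prop:invJoin} together with abelianness of $\Gamma$ allows an inductive computation of each $Q_k$. The Gowers–Host–Kra uniformity seminorm $\tnorm{x}_k$ of Definition~\ref{def-seminorm} is then realized as the $2^k$-th root of the trace on $N^{[k]}$ of a tensor built from $x$ and $x^*$ at the vertices of $\{0,1\}^k$. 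Standard trace manipulations show that $\tnorm{\cdot}_k$ is a seminorm and satisfies the Cauchy–Schwarz–Gowers monotonicity $\tnorm{x}_k \leq \tnorm{x}_{k+1}$.

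Next, set $Z_n := \{x \in N : \tnorm{x}_{n+1} = 0\}$ and establish (i)–(iii) by induction on $n$. For $n = 1$, I would expand $\tnorm{x}_2$ as the squared $L^2$-norm of the conditional expectation of an appropriate tensor onto $(N \otb N^{\op})^\Gamma$, and identify this invariant algebra via Proposition~\ref{prop:invJoin}; Theorem~\ref{thm:RelProd} applied with $Q = \C$ then identifies $Z_1$ with the maximal compact subsystem of $N$, giving (i). For the inductive step, the key identity
\[
\tnorm{x}_{n+1}^{2^{n+1}} = \bigl\|E_{Q_{n+1}}\bigl(x^{\langle n+1\rangle}\bigr)\bigr\|_{L^2(N^{[n+1]})}^2,
\]
where $x^{\langle n+1\rangle} \in N^{[n+1]}$ is the element built from $x$ and $x^*$ at the vertices of $\{0,1\}^{n+1}$, combined with Theorem~\ref{thm:RelProd} applied to the joining $N \otb_{Z_n} N^{\op}$, equates the vanishing of $\tnorm{x}_{n+1}$ with $x$ lying in the maximal compact extension of $Z_n$ inside $N$. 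This simultaneously delivers (ii) and (iii).

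The main obstacle is verifying that each $Z_n$ is genuinely a von Neumann subalgebra rather than merely a strong-operator-closed linear subspace, since the Austin–Eisner–Tao example of Section~\ref{sec-compact} shows that the space of relatively almost periodic elements can fail to be an algebra for arbitrary compact extensions. The resolution must exploit the specific cubic construction: one expects that $\tnorm{\cdot}_{n+1}$ satisfies an algebra-type Cauchy–Schwarz–Gowers inequality forcing its zero set to be closed under multiplication and involution, thereby bypassing the Austin–Eisner–Tao obstruction. Countable abelianness of $\Gamma$ and separability of the predual will enter in controlling limits and ensuring the inductive construction produces genuine $\Gamma$-invariant von Neumann subalgebras through appropriate approximation arguments.
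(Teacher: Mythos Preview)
Your proposal contains a fundamental misreading and two genuine strategic gaps.

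First, the definition $Z_n := \{x \in N : \tnorm{x}_{n+1} = 0\}$ cannot be right: since $\tnorm{1}_{n+1}=1$, this set does not even contain the unit. The correct relationship (and what the paper actually proves in Lemma~\ref{lem:normchar}) is $E_{Z_n}(x)=0 \iff \tnorm{x}_{n+1}=0$; the zero set of the seminorm is the \emph{orthogonal complement} of $Z_n$, not $Z_n$ itself. Statement (iii) in the theorem is infelicitously phrased, but your definition makes the whole program collapse immediately.

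Second, your plan to apply Theorem~\ref{thm:RelProd} to the joining $N\otb_{Z_n} N^{\op}$ in the inductive step fails because that theorem requires $Q$ to lie in the \emph{center} of both factors, and there is no reason for $Z_n\subset N$ to be central. Relatedly, your conclusion that $Z_{n+1}$ equals the maximal compact extension of $Z_n$ inside $N$ is simply false in general, as the paper explicitly remarks after Theorem~\ref{thm:cZcompact}; this already fails in the commutative Host--Kra theory. What the paper establishes is only that $\cZ_k$ is \emph{some} compact extension of $\cZ_{k-1}$, and the proof of Theorem~\ref{thm:cZcompact} uses only Proposition~\ref{prop:invJoin} (which has no centrality hypothesis), not Theorem~\ref{thm:RelProd}.

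Third, and most importantly, you correctly flag the von Neumann algebra problem but your proposed fix via a multiplicative Cauchy--Schwarz--Gowers inequality will not work: the zero set of $\tnorm{\cdot}_{n+1}$ is a linear subspace, not a subalgebra, so there is nothing to salvage there. The paper's resolution is entirely different and is the real content of Section~\ref{sec-hk}: one introduces the group $\cT_*^{[k]}$ of \emph{side transformations} acting on $N^{[k]}$, proves (Proposition~\ref{prop:zerocoord}) that its invariants in $L^2(N^{[k]},\tau^{[k]})$ coincide with $L^2(N)\ot 1^{\ot 2^k-1}$, and then \emph{defines} $\cZ_{k-1}$ as the image in $N$ of the fixed-point algebra $\cJ^{[k]^*}$. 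Being a fixed-point algebra, this is automatically a von Neumann subalgebra, and the Austin--Eisner--Tao obstruction is bypassed by construction rather than by any inequality. The seminorm characterization (Lemma~\ref{lem:normchar}) and the compact-extension property (Theorem~\ref{thm:cZcompact}) are then deduced \emph{after} $\cZ_{k-1}$ is already known to be an algebra. Your outline has the logical order reversed and is missing the side-transformation mechanism entirely.
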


We believe this non-commutative generalization of the Host--Kra--Ziegler hierarchy to be a promising new direction in the study of $W^*$-dynamical systems, and we plan to further investigate the compact extensions arising in Theorem~\ref{thm-hk} in future work. In particular, we plan to establish finer algebraic and geometric descriptions of them by analogy with the commutative theory for $\mathbb{Z}$-actions and relate them with non-commutative multiple recurrence and non-commutative non-conventional ergodic averages. 

\subsection*{Organization of the paper} Besides the introduction, this paper has five other sections. In the Preliminaries, we recall some definitions, constructions, and tools needed in the remainder of the paper, and we prove some useful lemmas. In Section~\ref{sec-compact}, we give a precise definition of a compact extension of $W^*$-dynamical systems. We then prove our first main classification Theorem~\ref{thm:compchar}, derive some corollaries, answer the previously mentioned questions from the literature, and discuss a few related results.
We continue in Section~\ref{sec:joingings} with the study of joinings of $W^*$-dynamical systems, including a proof of Theorem~\ref{thm:RelProd}. In Section~\ref{sec-dichotomy}, we investigate the dichotomy between weakly mixing and compact extensions. Finally, in Section~\ref{sec-hk}, we extend the notions of Host--Kra--Ziegler factors and Gowers--Host--Kra uniformity seminorms to the von Neumann algebraic framework and prove Theorem~\ref{thm-hk}.

\subsection*{Acknowledgements} The first author thanks Terence Tao for encouragement. The second author thanks Adrian Ioana for some stimulating discussions, and Ionu\c{t} Chifan for a useful discussion on results related to \cite{chifan-peterson}. The authors would also like to thank Cyril Houdayer for kindly pointing out an issue with Definition~\ref{def:joining} in an earlier version of the paper. The authors are grateful to the anonymous referee for helpful corrections and suggestions.

\section{Preliminaries}\label{sec-prelim}

\subsection{von Neumann algebras}\label{sec-prelim-algebras}

In this section, we fix some general notations for tracial von Neumann algebras. 
For basic results and any undefined concepts in the theory of tracial von Neumann algebras, we refer the interested reader to \cite{AP}. 

Let $\cH$ be a Hilbert space. Von Neumann algebras are unital $^*$-subalgebras of the $^*$-algebra $B(\cH)$ of bounded operators on $\cH$ that are closed in the weak (or equivalently strong) operator topology. Von Neumann's bi-commutant theorem tells us that von Neumann algebras are exactly the $^*$-subalgebras $M$ that are equal to their double commutant $M''$. This equivalence gives rise to a useful interplay between the algebraic and analytic aspects of these objects. 

Unless explicitly stated otherwise, all von Neumann algebras we will consider are \textit{tracial von Neumann algebras}: von Neumann algebras $M$ equipped with a faithful normal tracial state $\tau$, that is, a positive linear functional $\tau:M\rarrow\C$ satisfying $\tau(1)=1$, $\tau(xy)=\tau(yx)$ for all $x,y\in M$, $\tau(x^*x)=0$ iff $x=0$ for all $x\in M$, and $\tau$ is continuous with respect to the weak operator topology when restricted to the unit ball of $M$. 
When necessary, we also write $\tau_M$ to emphasize the von Neumann algebra $M$ on which the trace is defined.  
We denote by $\norm{x}_2 \coloneqq \sqrt{\tau(x^*x)}$ the \textit{2-norm} of $x\in M$ and denote by $L^2(M)$ the completion of $M$ with respect to this norm. When identifying $M$ with a dense subspace of $L^2(M)$, we will also denote it by $\hat M$, and we will write $\hat x\in L^2(M)$ when $x\in M$. 

Let $P\subset M$ be an inclusion of von Neumann algebras, which unless stated otherwise is assumed to be unital (that is, $1_P = 1_M$). We denote by $E_P:M\rarrow P$ the unique $\tau$-preserving conditional expectation from $M$ onto $P$, which arises as the restriction to $M$ of the orthogonal projection from $L^2(M)$ onto $L^2(P)$. We denote by $P'\cap M \coloneqq \{x\in M\mid \forall y\in P: xy=yx\}$ the \emph{relative commutant} of $P$ in $M$, and by $\cN_M(P)\coloneqq \{u\in\cU(M)\mid uPu^*=P\}$ the (set-wise) \emph{normalizer} of $P$ in $M$. We say that $P$ is \emph{regular} in $M$ if $\cN_M(P)'' = M$, that is, $\cN_M(P)$ generates $M$ as a von Neumann algebra.
We will further denote by $\cU(M)$ the unitary group of $M$ and by $\cP(M)$ the set of projections in $M$.

\subsection{Hilbert modules and conditionally orthonormal bases}\label{ssec:modules}

For the reader's convenience, we will recount here some of the basic properties and constructions which we will need later on. A von Neumann algebraist may want to skip this section and refer back to the notation if necessary.

\begin{definition}
Let $M$ be a von Neumann algebra. A \textit{left $M$-module} is a Hilbert space $\cH$ together with a normal $^*$-homomorphism $\pi: M\rarrow B(\cH)$. Similarly, a \textit{right $M$-module} comes with a normal $^*$-homomorphism $\pi: M^{\text{op}}\rarrow B(\cH)$, where $M^{\text{op}}$ denotes the opposite von Neumann algebra, that is, $M$ equipped with reversed multiplication operation  $x\cdot y \coloneqq yx$.
\end{definition}

We will mostly work with right modules, though we note that this is just a choice, and we could as well interchange the roles of left and right throughout the paper.

\begin{example}
The most basic example of a right $M$-module is $L^2(M)$, where $M$ acts by right multiplication. Similarly, given any Hilbert space $\cH$, we can let $M$ act by right multiplication on $L^2(M)\ot \cH$ (i.e., right multiplication on $L^2(M)$, and trivially on $\cH$). Given a projection $p\in B(L^2(M)\ot\cH)$ which is invariant under right multiplication by $M$, we also obtain in this way a right $M$-module of the form $p(L^2(M)\ot\cH)$. 
In fact, every right $M$-module is isomorphic to a module of this form, see for instance \cite[Proposition 8.2.3]{AP}. 
\end{example}

\begin{definition}\label{def-leftbdd}
Let $(M,\tau)$ be a tracial von Neumann algebra and $\cH$ a right Hilbert $M$-module. 
We say that a vector $\xi\in\cH$ is \textit{left ($M$-)bounded} if there exists $c>0$ such that $\norm{\xi x} \leq c\norm{x}_2$ for every $x\in M$, that is, the map $\hat x\mapsto \xi x$ extends to a bounded operator $L_\xi$ from $L^2(M)$ to $\cH$. We denote by $\cH^0$ the subspace of left bounded vectors in $\cH$.  
\end{definition}

It is known (see for instance \cite[Proposition 8.4.4]{AP}) that $\cH^0$ is always dense in $\cH$. 
Observe that given $\xi,\eta\in \cH^0$, the operator $L_\xi^*L_\eta$ commutes with the right $M$-action on $L^2(M)$, and thus equals left multiplication by an element from $M$ (see for instance \cite[Theorem 7.1.1]{AP}). Upon identifying $L_\xi^*L_\eta$ with this element, we thus get $L_\xi^*L_\eta\in M$. We can thus define an \textit{$M$-valued inner product} on $\cH^0$ by $$\langle \xi,\eta\rangle_M\coloneqq L_\xi^*L_\eta\in M$$  
The following straightforward lemma shows that this $M$-valued inner product indeed behaves like an inner product with values in $M$ (see also \cite[Lemma 8.4.5]{AP}).

\begin{lemma}
   Let $(M,\tau)$ be a tracial von Neumann algebra and $\cH$ a right Hilbert $M$-module. For all $x\in M$ and $\xi,\eta\in \cH^0$, the following properties hold. 
    \begin{enumerate}
        \item $\langle \xi,\xi\rangle_M\geq 0$ and $\langle \xi,\xi\rangle_M=0$ if and only if $\xi=0$.
        \item $(\langle \xi,\eta\rangle_M)^* = \langle \eta,\xi\rangle_M$.
        \item $\langle \xi,\eta x\rangle_M = \langle \xi,\eta\rangle_M x$ and $\langle \xi x,\eta\rangle_M = x^* \langle \xi,\eta\rangle_M$.
        \item $\tau(\langle \xi,\eta\rangle_M) = \langle \xi,\eta\rangle_\cH$.
    \end{enumerate}
\end{lemma}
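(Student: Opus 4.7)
The plan is to reduce everything to manipulations of the bounded operators $L_\xi : L^2(M) \to \cH$ together with the standard representation $\lambda : M \to B(L^2(M))$ by left multiplication. The one algebraic identity I would record first is
\[
L_{\xi x} = L_\xi \circ \lambda(x), \qquad \xi \in \cH^0,\ x \in M,
\]
which one checks on vectors of the form $\hat y$ with $y \in M$ by computing $L_{\xi x}(\hat y) = (\xi x)y = \xi(xy) = L_\xi(\lambda(x)\hat y)$, and then extends by boundedness. Taking adjoints yields $L_{\xi x}^* = \lambda(x^*) L_\xi^*$. This single identity, together with the definition $\langle \xi,\eta\rangle_M = L_\xi^* L_\eta$ (viewed as an element of $M$ via $\lambda$), drives all four items.

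For item (1), I would note that $L_\xi^* L_\xi$ is a positive operator in $B(L^2(M))$ that commutes with the right $M$-action, hence corresponds via $\lambda$ to a positive element of $M$; for faithfulness, evaluating at $\hat 1$ gives $L_\xi \hat 1 = \xi$, so $L_\xi^* L_\xi = 0$ forces $L_\xi = 0$ and hence $\xi = 0$. Item (2) is immediate from $(L_\xi^* L_\eta)^* = L_\eta^* L_\xi$. For item (3), the key identity applies on each side: for instance $\langle \xi x,\eta\rangle_M = L_{\xi x}^* L_\eta = \lambda(x^*) L_\xi^* L_\eta = x^* \langle \xi,\eta\rangle_M$, and symmetrically $\langle \xi,\eta x\rangle_M = L_\xi^* L_\eta \lambda(x) = \langle \xi,\eta\rangle_M x$. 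For item (4), I would use the standard identity $\tau(a) = \langle \hat 1, \lambda(a)\hat 1\rangle_{L^2(M)}$ for $a\in M$ applied to $a = \langle \xi,\eta\rangle_M$; since $L_\xi \hat 1 = \xi$ and $L_\eta \hat 1 = \eta$, this rewrites as $\langle \hat 1, L_\xi^* L_\eta \hat 1\rangle_{L^2(M)} = \langle L_\xi \hat 1, L_\eta \hat 1\rangle_\cH = \langle \xi,\eta\rangle_\cH$.

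There is essentially no obstacle here; the only care needed is to keep track of the convention that elements of $M$ are identified with operators in $B(L^2(M))$ via left multiplication, so that the product in $M$ matches composition of operators and $\lambda$ is a $*$-homomorphism. Once that is in place, all four properties are short one- or two-line computations, consistent with the lemma's characterization as ``straightforward''.
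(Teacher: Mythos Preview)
Your proposal is correct and is exactly the standard argument; the paper itself does not give a proof but simply calls the lemma ``straightforward'' and refers to \cite[Lemma~8.4.5]{AP}, so your write-up is consistent with (and more detailed than) what the paper provides.
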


\begin{example}\label{ex:PsubM}
	When given an inclusion of tracial von Neumann algebras $P\subset M$, we can naturally view $L^2(M)$ as a right (or left) $P$-module. In this case, the $P$-valued inner product is determined by
	\[
	\langle x,y \rangle_P = E_P(x^*y)
	\]
	for $x,y\in M$.
\end{example}

One of the features of this $M$-valued inner product is that it gives us a notion of orthogonality (relative to $M$), and therefore a notion of conditionally orthonormal basis: 

\begin{definition}
 Let $(M,\tau)$ be a tracial von Neumann algebra and $\cH$ a right Hilbert $M$-module. 
  A \textit{conditionally orthonormal basis}, or a \textit{Pimsner--Popa basis}, for $\cH$ is a family $(\xi_i)_{i\in I}$ of non-zero left bounded vectors such that   
  \begin{itemize}
    \item[(i)] $\overline{\sum_i \xi_i M} = \cH$, and
    \item[(ii)] for all $i,j\in I$: $\langle \xi_i,\xi_j\rangle_M = \delta_{i,j} p_i$ for some projections $p_i\in\cP(M)$.
\end{itemize}
If $(\xi_i)_{i\in I}$ satisfies (ii), but not necessarily (i), we call the family $(\xi_i)_{i\in I}$ \emph{conditionally orthonormal}.  
If $\cH$ has a finite orthonormal basis as a module over $M$, then we also say that $\cH$ is \emph{finite rank}. 
\end{definition}

\begin{remark}
    A standard Gram-Schmidt orthonormalization shows that any finitely generated module has a finite orthonormal basis, and is thus a finite rank module, see for instance \cite[Lemma~8.5.2]{AP}.
\end{remark}

We also record here the following useful proposition, characterizing finite rank modules:

\begin{proposition}\label{prop:finiterank}
    Let $\cH$ be a right $M$-module and $(\xi_i)_{i\in I}$ be a conditionally orthonormal basis. Then $\cH$ is finite rank if and only if there exists $C>0$ such that $\sum_{i\in I} E_\cZ(\langle \xi_i,\xi_i\rangle_M) \leq C 1_M$, where $\cZ$ denotes the center of $M$ and $E_\cZ$ denotes its center-valued trace. 
\end{proposition}
\begin{proof}
This is exactly \cite[Proposition~9.3.2(i)]{AP} once one observes that $\hat{E}_Z(1)$ as defined in that proposition equals $\sum_{i\in I} E_\cZ(\langle \xi_i,\xi_i\rangle_M)$. Indeed, by \cite[Lemma~8.4.8]{AP}, $\sum_{i\in I} L_{\xi_i} L_{\xi_i}^* = 1$, and as in the beginning of \cite[\S 9.3]{AP}, we have $\hat E_\cZ(TT^*) = E_\cZ(T^*T)$ for any bounded, right $M$-linear operator $T: L^2(M) \to \cH$. Since $\langle \xi_i,\xi_i\rangle_M = L_{\xi_i}^*L_{\xi_i}$, the claim thus follows immediately.
\end{proof}

We refer the interested reader to \cite[Sections~8.4 and 9.3]{AP} for more details and properties of orthonormal bases and $M$-modules.

\subsection{Bimodules}\label{sec:bimod}

\begin{definition}
Let $(M_1,\tau_1)$ and $(M_2,\tau_2)$ be two tracial von Neumann algebras. An \textit{$M_1$-$M_2$-bimodule} is a Hilbert space $\cH$ endowed with two normal, commuting $^*$-homomorphisms $\pi_1:M_1\rightarrow B(\cH)$ and $\pi_2:M_2^{\text{op}}\rightarrow B(\cH)$. 
In this case, we define a $^*$-homomorphism $\pi_{\cH}:M_1\otimes M_2^{\text{op}}\rightarrow B(\cH)$ by $\pi_{\cH}(x\otimes y^{\text{op}}) = \pi_1(x)\pi_2(y^{\text{op}})$ and write $x\xi y=\pi_1(x)\pi_2(y^{\text{op}})\xi$, for all $x\in M_1$, $y\in M_2$ and $\xi\in\cH$. 
We also write $_{M_1}{\cH}_{M_2}$ to indicate that $\cH$ is an $M_1$-$M_2$-bimodule. 
\end{definition}

Examples of bimodules include the {\it trivial} $M_1$-bimodule $_{M_1}L^2(M_1)_{M_1}$ and the {\it coarse} $M_1$-$M_2$-bimodule $_{M_1}L^2(M_1)\otimes L^2(M_2)_{M_2}$.

Next, we recall an important construction for bimodules. If $\cH$ is an $M_1$-$M_2$-bimodule and $\cK$ is an $M_2$-$M_3$-bimodule, then the \emph{Connes fusion tensor product}  of $\cH$ and $\cK$ is an $M_1$-$M_3$-bimodule denoted by $\cH\otimes_{M_2}\cK$. 
In order to define this tensor product, one checks (see for instance \cite[Proposition 13.2.1]{AP}) that the formula
\[
\langle \xi_1\ot\eta_1,\xi_2\ot\eta_2\rangle \coloneqq \langle \eta_1,\langle \xi_1,\xi_2\rangle_{M_2} \eta_2\rangle_\cK
\]
yields a positive sesquilinear form on the algebraic tensor product $\cH^0\odot \cK$. We then define $\cH\ot_{M_2}\cK$ as the separation and completion of $\cH^0\odot\cK$ with respect to this sesquilinear form. It is straightforward to check that this Hilbert space becomes an $M_1$-$M_3$-bimodule for the canonical actions given by
\[
x(\xi\ot_{M_2}\eta) = (x\eta)\ot_{M_2}\eta, \quad \text{and} \quad (\xi\ot_{M_2}\eta)y = \xi\ot_{M_2}(\eta y),
\]
for $x\in M_1$, $y\in M_3$, $\xi\in \cH^0$, $\eta\in K$. Moreover, the resulting tensor product operation on bimodules is easily seen to be associative. For further details, we refer the interested reader to e.g. \cite[Appendix B.$\delta$ of Chapter 5]{co94} or \cite[Chapter 13]{AP}.

Next, fix two von Neumann algebras $M$ and $N$. An $M$-$N$-bimodule $\cH$ together with a fixed vector $\xi\in\cH$ is also called a \emph{pointed $M$-$N$-bimodule}.  It is well-known (see, e.g., \cite[Section 13.1.2]{AP}) that there is a one-to-one correspondence between normal completely positive maps $\Phi:M\rarrow N$ and pointed $M$-$N$-bimodules $(\cH,\xi)$ with $\xi$ a left $N$-bounded and cyclic vector in $\cH$, that is, $\overline{\sp\{M\xi N\}}=\cH$. We briefly recall the construction here for later reference.

First, let $\Phi:M\rarrow N$ be a normal completely positive map. On the algebraic tensor product $M\odot N^{\op}$, we consider the positive sesquilinear functional 
\[
\langle x_1\ot y_1,x_2\ot y_2\rangle_\Phi = \tau_N(y_1^*\Phi(x_1^*x_2)y_2)
\]
where $x_1,x_2\in M$, $y_1,y_2\in N$. Let $\cH_\Phi$ denote the completion of the quotient of $M\odot N^{\op}$ by the kernel of this functional, which becomes an $M$-$N$-bimodule for the canonical actions by left, resp. right, multiplication. Note that $\cH$ comes equipped with a special vector, namely (the class of) $1_M\ot 1_N\eqqcolon \xi_\Phi$, which is easily seen to be cyclic and left $N$-bounded.

Conversely, given a pointed $M$-$N$-bimodule $(\cH,\xi)$, where $\xi$ is cyclic and left $N$-bounded, we can build a completely positive map as follows. Consider the bounded operator $L_\xi: L^2(N)\rarrow \cH$ from Definition \ref{def-leftbdd}, and for $x\in M$ define $\Phi_\cH(x)\coloneqq L_\xi^* \pi_M(x) L_\xi$. It is then easily checked that $\Phi_\cH$ takes values in $N$, is completely positive, and that $\Phi\mapsto (\cH_\Phi,\xi_\Phi)$ and $(\cH,\xi)\mapsto \Phi_\cH$ are inverse to each other.

\subsection{Relatively independent products}\label{ssec:relprod}

In this section, we briefly recall the classical construction of relatively independent products for later reference, and rewrite their $L^2$-space in purely algebraic terms.  
We will recall the constructions in the case of standard probability spaces where a classical disintegration is available. 
Similar constructions are available for general probability algebras by working with canonical disintegrations (e.g., see  \cite[\S 2,3]{jamneshan2019fz} and the references therein). 
In the next section, we will discuss the analogous $L^2$-spaces in the non-commutative setting, discuss conditional Hilbert--Schmidt operators, and relate them to these $L^2$-spaces.

Suppose $(X,\mu)$ and $(Y,\nu)$ are standard probability spaces, and let $\pi:X\rarrow Y$ be a factor map. We then have a classical disintegration 
\[
\mu = \int_Y d\mu_y\,d\nu(y)
\]
where $\mu_y$ is a probability measure supported on $\pi^{-1}(\{y\})$. We can now construct a relatively independent product $X\times_Y X$ as follows. As a set, take 
\[
X\times_Y X = \{(x_1,x_2)\in X\times X\mid \pi(x_1) = \pi(x_2)\},
\]
and equip it with the measure
\[
\mu\times_Y\mu = \int_Y (\mu_y\times \mu_y)\,d\nu(y).
\]
It is then easy to verify that this relatively independent product fits in the commutative diagram
\[
\xymatrix{ & X\times_Y X \ar[ld]_{\Pi_1} \ar[rd]^{\Pi_2} & \\
X \ar[rd]_{\pi} & & X \ar[ld]^{\pi}\\
& Y &}
\]
for the coordinate projection maps $\Pi_1:X\times_Y X\rarrow X$ and $\Pi_2:X\times_Y X\rarrow X$ onto the first and second copy of $X$ respectively. Note that we have
\[
\int_{X\times_Y X} f_1(x)f_2(x') \,d(\mu\times_Y\mu)(x,x') = \int_Y \left(\int_{X\times X} f_1(x)f_2(x') \,d(\mu_y\times \mu_y(x,x'))\right)d\nu(y).
\]
From the pointless view on the level of the $L^2$-spaces, the above gives the commutative diagram of inclusions
\[
	\xymatrix{ & L^2(X\times_Y X) & \\
		L^2(X) \ar[ru]^{\iota_1} & & L^2(X) \ar[lu]_{\iota_2}\\
		& L^2(Y) \ar[lu]^{\pi_*} \ar[ru]_{\pi_*} &}
\]
Note that all the above $L^2$-spaces are canonically (left and right) $L^\infty(Y)$-modules. Moreover, it is not hard to see that
\begin{equation}\label{eq:RelProdL2}
L^2(X\times_Y X)\cong L^2(X)\ot_{L^\infty(Y)} L^2(X)
\end{equation}
where the right hand side denotes the Connes fusion tensor product of $L^2(X)$ as a right $L^\infty(Y)$-module with $L^2(X)$ as a left $L^\infty(Y)$-module. Note that in this commutative situation, $L^2(X)\ot_{L^\infty(Y)} L^2(X)$ is still canonically an $L^\infty(Y)$-module. 

To check \eqref{eq:RelProdL2}, we observe that the $L^\infty(Y)$-valued inner product on $L^\infty(X)$ is given by
\[
\langle \xi,\eta\rangle_{L^\infty(Y)}= E_{L^\infty(Y)}(\overline{\xi}\eta)\in L^\infty(Y),
\]
for $\xi,\eta\in L^\infty(X)$ (cf. Example~\ref{ex:PsubM}). 
One can then establish an isomorphism
\begin{equation}\label{eq:HSiso}
L^2(X)\ot_{L^\infty(Y)} L^2(X) \xrightarrow{\cong} L^2(X\times_Y X)
\end{equation}
by
\[
\xi\ot_{L^\infty(Y)} \eta \mapsto \zeta(x_1,x_2) = \xi(x_1)\eta(x_2).
\]
This map is obviously surjective, and the following calculation implies that it is well-defined and injective.
\begin{equation*}
\begin{split}
\langle \xi\ot_{L^\infty(Y)}\eta, &\xi\ot_{L^\infty(Y)}\eta\rangle_{L^2(X)\ot_{L^\infty(Y)} L^2(X)}\\ &= \langle \eta,\langle \xi,\xi\rangle_{L^\infty(Y)} \eta\rangle_{L^2(X)} \\
&= \langle \eta, E_{L^\infty(Y)}(\abs{\xi}^2)\eta\rangle_{L^2(X)}\\
&= \int_X E_{L^\infty(Y)}(\abs{\xi}^2)(x) \abs{\eta}^2(x)\,d\mu(x)\\
&= \int_Y \left(\int_X E_{L^\infty(Y)}(\abs{\xi}^2)(x) \abs{\eta}^2(x)\,d\mu_y(x)\right) d\nu(y)\\
&= \int_Y \left(\int_X  E_{L^\infty(Y)}(\abs{\xi}^2)(y) \abs{\eta}^2(x)\,d\mu_y(x)\right) d\nu(y)\\
&= \int_Y \left(\int_X \left[\int_X \abs{\xi}^2(x')\,d\mu_{y}(x')\right] \abs{\eta}^2(x)\,d\mu_y(x)\right) d\nu(y)\\
&= \int_Y \left(\int_{X\times X} \abs{\xi}^2(x')\abs{\eta}^2(x)\,d(\mu_{y}\times \mu_y(x,x'))\right) d\nu(y)\\
&= \int_{X\times_Y X} \abs{\xi}^2(x')\abs{\eta}^2(x)\,d(\mu\times_Y \mu)(x',x)\\
&= \langle \zeta, \zeta\rangle_{L^2(X\times_Y X)}.
\end{split}
\end{equation*}

\subsection{Conditional Hilbert--Schmidt operators}\label{ssec:HS} Our treatment of conditional Hilbert--Schmidt operators is motivated by the ones given in \cite[\S 3.3.2]{petersonnotes} and \cite[Appendix~C]{kerrli}, though some care has to be taken since the $Q$-valued inner product is only defined on the dense subspace of left bounded vectors in general, and we can only make sense of conditional orthonormality on this dense subspace. 

Before continuing, we recall the basic construction for inclusions of tracial von Neumann algebras.

\begin{definition}
    Let $(N,\tau)$ be a tracial von Neumann algebra and $Q\subset N$ be a von Neumann subalgebra. Denote by $e_Q\in B(L^2(N))$ the orthogonal projection onto $L^2(Q)$. The von Neumann algebra $\langle N,e_Q\rangle$ generated by $N$ and $e_Q$ inside $B(L^2(N))$ is called the \textit{basic construction} for the inclusion $Q\subset N$.
\end{definition}

It is well-known that $\langle N,e_Q\rangle$ consists exactly of the operators in $B(L^2(N))$ which commute with the right $Q$-action, see for instance \cite[Proposition~9.4.2]{AP}.

We can now define conditional Hilbert--Schmidt operators in the above framework. This definition appears for instance in \cite[Definition~3.6]{petersonnotes} in the commutative framework.

\begin{definition}\label{def:HS}
Let $(N,\tau)$ be a tracial von Neumann algebra and $Q\subset N$ a von Neumann subalgebra. An operator $T\in B(L^2(N))$ is \textit{Hilbert--Schmidt relative to $Q$} (or, \textit{conditionally Hilbert--Schmidt} if $Q$ is clear from context) if $T\in \langle N,e_Q\rangle$, i.e., $T$ is in the basic construction of $Q\subset N$, and $\sum_{\xi\in\Omega} \norm{T\xi}_2^2 < \infty$ for every conditionally orthonormal set $\Omega\subset L^2(N)^0$ where we view $L^2(N)$ as a right $Q$-module. 

We will denote by $HS_{N,Q}$, or $HS_Q$, the set of conditional Hilbert--Schmidt operators, and we equip it with the Hilbert--Schmidt norm given by $\norm{T}_{HS}^2\coloneqq \sum_i \norm{Te_i}_2^2$, where $\{e_i\}_i$ is any conditionally orthonormal basis of $L^2(N)$ as a right $Q$-module.
\end{definition}

We note that it follows from Lemma~\ref{lem:HS} below that the Hilbert--Schmidt norm defined above is independent of the chosen basis.

\begin{remark}
One can define conditional Hilbert--Schmidt operators more generally between any two right $Q$-modules: Suppose $\cH$ and $\cK$ are right $Q$-modules. Then a map $T:\cH\rarrow\cK$ is \textit{conditionally Hilbert--Schmidt} if it is $Q$-linear and $\sum_{\xi\in\Omega} \norm{T\xi}_2^2 < \infty$ for every conditionally orthonormal set $\Omega\subset \cH^0$. However, we will mostly be interested in the specific case of Hilbert--Schmidt operators defined in Definition~\ref{def:HS}.
\end{remark}

\begin{remark}
We point out that the Hilbert space norm $\norm{.}_2$ is used for the sum in the above definition, whereas orthogonality of the involved elements is taken with respect to the conditional inner product $\langle .\,,.\rangle_Q$, whose associated norm is only defined on the dense set of bounded vectors. Also, note that in the unconditional situation where $Q=\C$, we recover the usual notion of a Hilbert--Schmidt operator. 
\end{remark}

In the setting of Definition~\ref{def:HS}, it turns out there is an easy way to describe the conditional Hilbert--Schmidt operators. First, recall that, given a von Neumann algebra inclusion $Q\subset N$, the basic construction $\langle N,e_Q\rangle$ is in general no longer a tracial von Neumann algebra, but it has a canonical semifinite trace $\hat\tau$, which on positive elements takes values in $[0,\infty]$, and which is defined on the dense $^*$-subalgebra $\mathrm{span}\{xe_Q y\mid x,y\in N\}$ by
\[
\hat \tau(xe_Q y) = \tau(xy),
\]
see, e.g., \cite[\S 9]{AP}.

\begin{lemma}\label{lem:HS}
Let $(N,\tau)$ be a tracial von Neumann algebra and $Q\subset N$ a von Neumann subalgebra. An operator $T\in \langle N,e_Q\rangle$ is conditionally Hilbert--Schmidt if and only if $\hat \tau(T^*T)<~\infty$.
\end{lemma}
\begin{proof}
This is immediate from \cite[Proposition 8.4.15]{AP}, which implies that we have
\[
\sum_i \norm{Te_i}_2^2 = \hat\tau(T^*T),
\]
for any operator $T\in \langle N,e_Q\rangle$, and any conditionally orthonormal basis $\{e_i\}_i$ of $L^2(N)$ as a $Q$-module.
\end{proof}

The previous lemma shows that we can view the space of conditional Hilbert--Schmidt operators $HS_Q$ naturally as a subspace of $L^2(\langle N,e_Q\rangle,\hat \tau)$, when we equip $HS_Q$ with its Hilbert--Schmidt norm $\norm{T}_{HS}^2 = \sum_i \norm{Te_i}_2^2 = \hat\tau(T^*T)$. In fact, $L^2(\langle N,e_Q\rangle,\hat \tau)$ is the completion of $HS_Q$ for the Hilbert--Schmidt norm. Note that $HS_Q$ is not necessarily complete: for instance, when $Q=N$, we have $\langle N,e_N\rangle = N$ and $L^2(\langle N,e_N\rangle,\hat \tau) = L^2(N,\tau)$. Nevertheless, the next lemma shows that balls of finite operator-norm radius inside $HS_Q$ are in fact closed, which is analogous to the fact that the unit ball of a tracial von Neumann algebra is complete for the trace norm (which on the unit ball coincides with the strong operator topology).

\begin{lemma}\label{lem:rBallHScomplete}
Let $(N,\tau)$ be a tracial von Neumann algebra and $Q\subset N$ a von Neumann subalgebra. Then for every $r>0$, the convex set
\[
B_r\coloneqq \{T\in HS_Q\mid \norm{T}\leq r\}
\]
is complete in the Hilbert--Schmidt norm. Here $\norm{T}$ denotes the usual operator norm of $T\in B(L^2(N))$.
\end{lemma}
\begin{proof}
Assume $(T_n)_n$ is a Cauchy sequence in $HS_Q$ for the Hilbert--Schmidt norm. 
For $x,y\in HS_Q$, since $yy^*\leq \norm{yy^*}1$, we have $\norm{xy}_{HS}^2 = \hat\tau(xyy^*x^*)\leq \norm{y}^2\norm{x}_{HS}^2$, and thus for any $S\in HS_Q$, we get
\[
\norm{T_nS-T_mS}_{HS} \leq \norm{S}\norm{T_n-T_m}_{HS}.
\]
Since $(T_n)_n$ is Cauchy for the Hilbert--Schmidt norm, we get that $(T_nS)_n$ is Cauchy in $L^2(\langle N,e_Q\rangle,\hat\tau)$ for every $S\in HS_Q$. Since $HS_Q$ is dense inside $L^2(\langle N,e_Q\rangle,\hat\tau)$, we thus get an operator $T\in B(L^2(\langle N,e_Q\rangle,\hat\tau))$ determined by $T(S) = \lim_n T_nS$ for $S\in HS_Q$. In particular, we get that $T_n$ converges in the strong operator topology to $T$, and thus $\norm{T}\leq r$. Since $\langle N,e_Q\rangle\subset B(L^2(\langle N,e_Q\rangle,\hat\tau))$ is a von Neumann algebra, and thus closed in the strong operator topology, we moreover conclude that $T\in \langle N,e_Q\rangle$. Finally, let $(p_i)_i$ be an increasing net of finite projections in $\langle N,e_Q\rangle$ converging to $1$ in the strong operator topology. Then 
\[
\hat\tau(T^*T) = \sup_i \hat\tau(T^*p_iT) = \sup_i \norm{p_iT}_{HS}^2 = \sup_i \lim_n \norm{p_i T_n}_{HS}^2 \leq \sup_i\lim_n \norm{p_i}^2\norm{T_n}_{HS}^2 <\infty.
\]
We conclude that $T\in HS_Q$. Similarly, we get $\hat\tau((T_n-T)^*(T_n-T))\to 0$, and hence $T_n$ converges to $T$ in the Hilbert--Schmidt norm. This finishes the proof of the lemma.
\end{proof}

\subsection{Hilbert--Schmidt operators as convolution operators}\label{ssec:HSconv} In the commutative setting $L^\infty(Y)\subset L^\infty(X)$, given a Hilbert--Schmidt operator $K\in HS_{L^\infty(Y)}\subset \langle L^\infty(X), e_{L^\infty(Y)}\rangle$, we can consider the conditional convolution operator $K\ast_Y \cdot:L^2(X)\rarrow L^2(X)$ given by
\begin{equation}\label{eq:conv}
(K\ast_Y f)(x)\coloneqq \int_X K(x,x') f(x') \, d\mu_{\pi(x)}(x').
\end{equation}
Considering the inclusion $\iota_1: L^2(X)\hookrightarrow L^2(X)\ot_{L^\infty(Y)} L^2(X)$ in the first coordinate, we see that the corresponding orthogonal projection on this subspace is given by
\begin{equation*}
\begin{split}
p_1 = (\id\ot E_{L^\infty(Y)}):L^2(X)\ot_{L^\infty(Y)} L^2(X) \;&\rarrow\; L^2(X)\\
\xi\ot_{L^\infty(Y)}\eta \;&\mapsto\; \xi E_{L^\infty(Y)}(\eta)
\end{split}
\end{equation*}
for $\xi,\eta\in L^\infty(X)$. Similarly for $\iota_2$ and $p_2$. 

Using the canonical isomorphism $L^2(X\times_Y X)\cong L^2(X)\ot_{L^\infty(Y)} L^2(X)$ from \eqref{eq:HSiso}, we then get for $f\in L^2(X)$:
\begin{equation}\label{eq:pointlessconv}
K\ast_Y f = p_1(K\iota_2(f)) = (\id\ot E_{L^\infty(Y)})(K(1\ot_{L^\infty(Y)} f)).
\end{equation}
Indeed, if $K=\xi\ot_{L^\infty(Y)}\eta$ for $\xi,\eta\in L^\infty(X)$ and $f\in L^\infty(X)$, then by definition we have
\begin{equation*}
\begin{split}
((\id\ot E_{L^\infty(Y)})(K(1\ot_{L^\infty(Y)} f)))(x) &= ((\id\ot E_{L^\infty(Y)})(\xi\ot_{L^\infty(Y)} \eta f))(x)\\
&= (\xi E_{L^\infty(Y)}(\eta f))(x)\\
&= \xi(x) \int_X \eta(x')f(x')\,d\mu_{\pi(x)}(x')\\
&= \int_X \xi(x) \eta(x')f(x')\,d\mu_{\pi(x)}(x')\\
\end{split}
\end{equation*}
By linearity and continuity, we then see from \eqref{eq:conv} that \eqref{eq:pointlessconv} indeed holds.

This formalism now readily extends to general von Neumann algebras. Let $Q\subset N$ be an inclusion of tracial von Neumann algebras. Then the Connes fusion tensor product $L^2(N)\ot_Q L^2(N)$ plays the role of the relatively independent product, and it is well-known that $L^2(\langle N,e_Q\rangle, \hat\tau)$ is isomorphic to $L^2(N)\ot_Q L^2(N)$ via the canonical map $xe_Q y\mapsto \hat x\ot_Q \hat y$ (see, e.g., \cite[Exercise 13.13]{AP}).

Now consider a conditional Hilbert--Schmidt operator $K\in HS_Q\subset \langle N,e_Q\rangle$. Viewing $K$ as an operator $K:L^2(N)\to L^2(N)$, we also write $K\ast_Q f$ instead of $K(f)$ for $f\in L^2(N)$ to emphasize the fact that we view $K$ as a convolution operator, and by analogy with the commutative situation. We claim that
\begin{equation}\label{def:conditionalconvolution}
    K\ast_Q f =  (\id\ot E_Q)(K(1\ot_Q f))
\end{equation}
for $f\in L^2(N)$. Indeed, since $\langle N,e_Q\rangle$ is generated as a von Neumann algebra by elements of the form $xe_Qx'$ with $x,x'\in N$, it suffices to check \eqref{def:conditionalconvolution} for operators of that form. Fixing $x,x'\in N$, we then see that for any $y\in N$:
\[
xe_Qx'\ast_Q \hat y = (xe_Qx')(\hat y) = \widehat{xE_Q(x'y)}\in L^2(N).
\]
Similar to the commutative situation, we have that $xe_Qx'$ corresponds to $x\ot_Q x'$ through the aforementioned isomorphism, and thus we see that (dropping the hats for notational convenience)
\[
xe_Q x'\ast_Q y = x E_Q(x'y) = (\id\ot E_Q)(x\ot_Q x'y) = (\id\ot E_Q)((x\ot_Q x')(1\ot_Q y)),
\]
for all $y\in N$. Since $N$ is dense in $L^2(N)$, this proves our claim \eqref{def:conditionalconvolution}. 

We will most commonly use \eqref{def:conditionalconvolution} when working with conditional Hilbert--Schmidt operators in the remainder of this paper, since it is in our situation often computationally easier to work with $L^2(N)\ot_Q L^2(N)$ rather than with $L^2(\langle N,e_Q\rangle, \hat\tau)$.

\subsection{Some finiteness results for conditional Hilbert--Schmidt operators}

In this section, we establish some finiteness results for conditional Hilbert--Schmidt operators, which will help us later in proving that certain associated modules are of finite rank.

We fix a tracial von Neumann algebra $(N,\tau)$, a von Neumann subalgebra $Q\subset N$, and we view $L^2(N)$ as a right $Q$-module. Recall that $L^2(N)^0$ denotes the dense subspace of left bounded vectors equipped with the $Q$-valued inner product $\langle .\,,.\rangle_Q$. On $L^2(N)^0$, we associate with $\langle .\,,.\rangle_Q$ a $Q$-valued norm defined by 
\[
\norm{x}_Q \coloneqq \langle x,x\rangle_Q^{1/2}
\]
We note that in this situation,
\[
\norm{x}_Q^2 = \langle x,x\rangle_Q = E_Q(x^*x)
\]
for $x\in N$. The usual Hilbert space norm on $L^2(N)$ will be denoted by $\norm{.}_2$.

The main result of this section is Lemma~\ref{lem:centertrace} which shows that conditional Hilbert--Schmidt operators satisfy a stronger summability criterion than the one in their definition, where the $2$-norm is replaced by the center-valued trace. For this, we first establish the following fact, which can be viewed as a conditional Bessel inequality. Surely this has been observed by experts before, but we include a proof for the reader's convenience.

\begin{lemma}\label{lem:condBessel}
    If $\Omega\subset L^2(N)^0$ is a conditionally orthonormal set, then for every $K\in HS_Q$,
    \begin{equation}\label{eq:condBessel}
    \sum_{f\in \Omega} (\id\ot E_Q)(K(1\ot_Q f)) (\id\ot E_Q)(K(1\ot_Q f))^* \leq (\id\ot E_Q)(KK^*).
    \end{equation}
\end{lemma}
\begin{proof}
In the computation below, we will write $1\ot f$ instead of $1\ot_Q f$ for notational convenience. If $\Omega$ is finite, we can directly compute:
\begin{align*}
    0&\leq (\id\ot E_Q)\Big[\Big(K^* - \sum_{f\in \Omega} (1\ot f)(\id\ot E_Q)(K(1\ot f))^*\Big)^*\cdot\\
    &\qquad\qquad\qquad\qquad \Big(K^* - \sum_{f\in \Omega} (1\ot f)(\id\ot E_Q)(K(1\ot f))^*\Big)\Big]\\
    &= (\id\ot E_Q)(KK^*) - 2 \sum_{f\in \Omega} (\id\ot E_Q)\left[K(1\ot f)(\id\ot E_Q)(K(1\ot f))^*\right] \\
    &\qquad + \sum_{f,g\in \Omega} (\id\ot E_Q)\left[(\id\ot E_Q)(K(1\ot f))(1\ot f)^*(1\ot g)(\id \ot E_Q)(K(1\ot g))^*\right]\\
    &= (\id\ot E_Q)(KK^*) - 2\sum_{f\in \Omega} (\id\ot E_Q)(K(1\ot f))(\id\ot E_Q)(K(1\ot f))^* \\
    &\qquad + \sum_{f,g\in \Omega} (\id\ot E_Q)(K(1\ot f)) E_Q(f^*g)(\id \ot E_Q)(K(1\ot g))^*\\
    &\leq (\id\ot E_Q)(KK^*) - \sum_{f\in \Omega} (\id\ot E_Q)(K(1\ot f))(\id\ot E_Q)(K(1\ot f))^*,
\end{align*}
where we used that $\Omega$ is conditionally orthonormal in the last inequality. If $\Omega$ is infinite, it follows from the above that the sum on the left-hand side of \eqref{eq:condBessel} converges, and then the same computation yields the desired result.
\end{proof}

Using this, we can now establish the following stronger summability criterion for conditional Hilbert--Schmidt operators using the $Q$-valued norm, but where for technical reasons we need to pass to adjoints. Below in Lemma~\ref{lem:centertrace}, this technicality disappears thanks to the tracial property of the center-valued trace.

\begin{lemma}\label{lem:Qnorm}
    Let $K\in HS_Q$ be a conditional Hilbert--Schmidt operator. Then there exists a constant $C>0$ such that for any conditionally orthonormal subset $\Omega\subset L^2(N)^0$, we have 
    \[
    \sum_{f\in \Omega} \norm{(K\ast_Q f)^*}_Q^2 \leq C1_Q.
    \]
\end{lemma}
\begin{proof}
Firstly, we note that if $K\in HS_Q\subset \langle N,e_Q\rangle$, then $E_Q(KK^*)\leq \norm{KK^*}1_Q$. Furthermore, through the identification $K\mapsto \hat K\in L^2(N)\ot_Q L^2(N)$, we have $E_Q(KK^*) = (E_Q\ot E_Q)(\hat K\hat K^*)$. We will drop the hat for notational convenience in what follows.

Set $C\coloneqq \norm{KK^*}$. We can now compute:
\begin{align*}
    \sum_{f\in \Omega} \norm{(K\ast_Q f)^*}_Q^2 &= \sum_{f\in \Omega} \langle (K\ast_Q f)^*, (K\ast_Q f)^*\rangle_Q\\
    &= \sum_{f\in \Omega} E_Q\left((\id\ot E_Q)(K(1\ot_Q f)) (\id\ot E_Q)(K(1\ot_Q f))^*\right).
\end{align*}
By Lemma~\ref{lem:condBessel}, we thus get
\[
    \sum_{f\in \Omega} \norm{(K\ast_Q f)^*}_Q^2 \leq E_Q((\id\ot E_Q)(KK^*)) = (E_Q\ot E_Q)(KK^*) \leq C1_Q.
\]
This finishes the proof.
\end{proof}

For the next lemma, we denote by $\cZ = \cZ(Q)$ the center of $Q$, and by $E_\cZ:Q\to\cZ$ the corresponding center-valued trace.

\begin{lemma}\label{lem:centertrace}
    Let $K\in HS_Q$ be a conditional Hilbert--Schmidt operator. Then there exists a constant $C>0$ such that for any conditionally orthonormal subset $\Omega\subset L^2(N)^0$, we have 
    \[
    \sum_{f\in \Omega} E_\cZ(\langle K\ast_Q f, K\ast_Q f\rangle_Q) \leq C1_Q.
    \]
\end{lemma}
\begin{proof}
Using the tracial property of $E_\cZ$, we compute
\begin{align*}
    \sum_{f\in \Omega} E_\cZ(\langle K\ast_Q f, K\ast_Q f\rangle_Q) &= \sum_{f\in \Omega} E_\cZ(E_Q((K\ast_Q f)^* (K\ast_Q f)))\\
    &= \sum_{f\in \Omega} E_\cZ((K\ast_Q f)^* (K\ast_Q f))\\
    &= \sum_{f\in \Omega} E_\cZ((K\ast_Q f) (K\ast_Q f)^*)\\
    &= \sum_{f\in \Omega} E_\cZ(E_Q((K\ast_Q f) (K\ast_Q f)^*)).
\end{align*}
By Lemma~\ref{lem:Qnorm}, $\sum_{f\in \Omega} E_Q((K\ast_Q f) (K\ast_Q f)^*)= \sum_{f\in \Omega} \norm{(K\ast_Q f)^*}_Q^2 \leq C 1_Q$, and hence
\[
\sum_{f\in \Omega} E_\cZ(E_Q((K\ast_Q f) (K\ast_Q f)^*))= E_\cZ\left(\sum_{f\in \Omega} \norm{(K\ast_Q f)^*}_Q^2\right) \leq C 1_Q.
\]
This finishes the proof.
\end{proof}

\section{Compact extensions}\label{sec-compact}

In this section, we define \textit{conditionally (relative to $Q$) almost periodic elements} by analogy with the commutative case, and prove Theorem \ref{thm:compchar}. The following definition is inspired by the operator-theoretic conceptualizations in the commutative case in \cite[\S 2.13]{tao-poincare}, and is similar to the definitions in \cite[\S 3.1, Appendix C]{kerrli}. 

\begin{definition}
Let $Q\subset N$ be an inclusion of von Neumann algebras. Suppose $\cH$ is a right $Q$-module. 
    \begin{itemize}
        \item[(i)] A subset $Z\subset \cH$ is called a \textit{finitely generated module zonotope} if it is of the form $\{\sum_{f\in F} f\cdot x\mid x\in Q, \norm{x}\leq 1\}$ for some finite set $F\subset \cH$.
        \item[(ii)] We call $\cH$ a \textit{normed $Q$-module} if it is equipped with a norm $\norm{.}$ satisfying $\norm{\xi x}\leq \norm{\xi}\norm{x}$ for all $\xi\in\cH$, $x\in Q$.
        \item[(iii)] If $\cH$ is a normed $Q$-module, a set $K\subset \cH$ is called \textit{conditionally precompact} if for every $\eps>0$, there exists a finitely generated module zonotope $Z$ such that $K\subseteq_\eps Z$, that is, $\inf_{z\in Z}\norm{k-z} \leq \eps$ for all $k\in K$.
    \end{itemize}
\end{definition}

We note that working with right modules is just a choice, and one could easily adapt the definitions and results to left modules.

\begin{definition}\label{def:rap}
Let $\Gamma\act (N,\tau)$ be a tracial $W^*$-dynamical system, and let $Q\subset N$ be a $\Gamma$-invariant von Neumann subalgebra. 
Consider $L^2(N)$ as a right $Q$-module in the canonical way, and equip it with its usual 2-norm $\norm{.}_2$ coming from the trace of $N$. This turns $L^2(N)$ into a normed $Q$-module. We call an element $\xi\in L^2(N)$ \textit{almost periodic relative to $Q$} if its orbit is conditionally precompact. If $Q$ is clear from the context, we will also call $\xi$ \textit{conditionally almost periodic}, or just \textit{almost periodic}.
    
    We denote by $\cAP_{Q,N}^2\subset L^2(N)$ the set of all elements in $L^2(N)$ that are almost periodic relative to $Q$, and by $\cAP_{Q,N}= \cAP_{Q,N}^2\cap N$ the set of all elements in $N$ almost periodic relative to $Q$. When $N$ is clear from the context, we also write $\cAP^2_Q$ and $\cAP_Q$.
\end{definition}

Unwrapping the definition, one immediately gets the following.

\begin{lemma}\label{lem:cap}
Let $\Gamma\act^\sigma (N,\tau)$ be a tracial $W^*$-dynamical system, and let $Q\subset N$ be a $\Gamma$-invariant von Neumann subalgebra.  For any $\xi\in L^2(N)$, the following are equivalent.
    \begin{enumerate}
        \item $\xi$ is almost periodic relative to $Q$.
        \item For every $\eps>0$, there exist $\eta_1, \ldots, \eta_k\in L^2(N)$ such that for every $\gamma\in \Gamma$, there exist $\kappa_i(\gamma)\in Q$ with $\sup_{i,\gamma}\norm{\kappa_i(\gamma)}<\infty$ and 
        \[
        \norm{\sigma_\gamma(\xi) - \sum_{i=1}^k \eta_i\kappa_i(\gamma)}_2 \leq \eps.
        \]
    \end{enumerate}
\end{lemma}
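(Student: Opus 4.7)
The plan is to directly unfold the definition of conditional precompactness of the orbit $\{\sigma_\gamma(\xi) : \gamma \in \Gamma\}$ as a subset of the normed $Q$-module $L^2(N)$. Both implications amount to bookkeeping that identifies the ``finitely generated module zonotope'' appearing in the previous definition with a finite sum $\sum_{i=1}^k \eta_i \kappa_i(\gamma)$ as in item (2), up to rescaling the generators to absorb or restore the operator-norm bound on the $Q$-coefficients. The key point is that a zonotope $\{\sum_{i=1}^k \eta_i x_i : x_i \in Q,\ \norm{x_i}\leq 1\}$ and the image of a uniformly bounded family $(\kappa_i(\gamma))_{\gamma}$ through the linear map $(x_1,\ldots,x_k)\mapsto \sum_i \eta_i x_i$ differ only by a rescaling of the generators.

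For (1) $\Rightarrow$ (2), I would fix $\eps > 0$ and apply conditional precompactness to the orbit of $\xi$ with slack parameter, say, $\eps/2$. This produces a finite set $F = \{\eta_1, \ldots, \eta_k\} \subset L^2(N)$ such that for every $\gamma \in \Gamma$,
\[
\inf\Bigl\{\,\bigl\|\sigma_\gamma(\xi) - \sum_{i=1}^k \eta_i x_i\bigr\|_2 \,:\, x_i \in Q,\ \norm{x_i}\leq 1 \Bigr\} \,\leq\, \eps/2.
\]
For each $\gamma$ I then select $Q$-coefficients $\kappa_i(\gamma)$ of operator norm at most $1$ realizing distance at most $\eps$; the resulting family satisfies $\sup_{i,\gamma}\norm{\kappa_i(\gamma)} \leq 1 < \infty$, which is (2). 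The mild point that the infimum need not be attained is handled precisely by first invoking precompactness with the strictly smaller parameter $\eps/2$.

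For the converse (2) $\Rightarrow$ (1), I would start from the data $\eta_1,\ldots,\eta_k$ and $\{\kappa_i(\gamma)\}_\gamma$ provided by (2) for a given $\eps>0$, and set $M \coloneqq \sup_{i,\gamma}\norm{\kappa_i(\gamma)} < \infty$ (the trivial case $M=0$ being immediate). Rewriting
\[
\sum_{i=1}^k \eta_i \kappa_i(\gamma) \;=\; \sum_{i=1}^k (M\eta_i)\bigl(M^{-1}\kappa_i(\gamma)\bigr),
\]
each new coefficient $M^{-1}\kappa_i(\gamma)$ lies in the closed unit ball of $Q$. Hence the orbit of $\xi$ is contained in the $\eps$-neighborhood of the finitely generated module zonotope generated by $\{M\eta_1,\ldots,M\eta_k\}$, which is exactly conditional precompactness of the orbit.

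I do not expect any real obstacle: this lemma is a restatement of the definition of relative almost periodicity in concrete terms, and the only minor technicality is the rescaling argument plus the issue of non-attained infima, both of which are routine.
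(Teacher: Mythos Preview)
Your proposal is correct and is exactly what the paper intends: the paper does not give a proof at all, merely stating that the lemma follows by ``unwrapping the definition,'' and your argument carries out precisely this unwrapping, with the rescaling and non-attained-infimum bookkeeping handled correctly.
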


Note that we could also pick the $\eta_i$ in item (2) above from $N$ instead of $L^2(N)$ by density. The following properties follow from a straightforward computation which is left to the interested reader.

\begin{lemma}\label{lem:closed}
The following statements hold. 
    \begin{itemize}
        \item[(i)] $\cAP_{Q,N}^2\subset L^2(N)$ is a $\Gamma$-invariant Hilbert subspace.
        \item[(ii)] $\cAP_{Q,N}\subset N$ is a $\Gamma$-invariant linear subspace which is closed in the strong operator topology.
    \end{itemize}
\end{lemma}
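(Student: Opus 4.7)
The plan is to observe that conditional almost periodicity is a purely metric condition on orbits, stable under the relevant operations; everything reduces to elementary manipulations of finitely generated module zonotopes combined with the fact that each $\sigma_\gamma$ acts as a 2-norm isometry.

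For $\Gamma$-invariance in both items, note that for any $\gamma_0 \in \Gamma$, right multiplication by $\gamma_0$ permutes $\Gamma$, so the orbit $\{\sigma_\gamma(\sigma_{\gamma_0}\xi) : \gamma \in \Gamma\}$ coincides with $\{\sigma_{\gamma'}(\xi) : \gamma' \in \Gamma\}$ as a set, and hence is conditionally precompact whenever the orbit of $\xi$ is. For the linear structure of $\cAP^2_{Q,N}$, the key observation is that the class of finitely generated module zonotopes is closed under Minkowski sums and complex scalings: if $Z_i = \{\sum_{f \in F_i} f \cdot x_f : x_f \in Q,\, \norm{x_f} \leq 1\}$ for $i=1,2$ and $\lambda_1, \lambda_2 \in \C$, then $\lambda_1 Z_1 + \lambda_2 Z_2$ is contained in the finitely generated module zonotope with generating set $\{\lambda_1 f : f \in F_1\} \cup \{\lambda_2 g : g \in F_2\}$. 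Combined with the triangle inequality, this yields that $\lambda_1 \xi + \lambda_2 \eta$ is conditionally almost periodic whenever $\xi$ and $\eta$ are. For 2-norm closedness of $\cAP^2_{Q,N}$, if $\xi_n \to \xi$ in $\norm{\cdot}_2$ with $\xi_n \in \cAP^2_{Q,N}$, then given $\eps > 0$ I would pick $n$ with $\norm{\xi_n - \xi}_2 \leq \eps/2$ and a zonotope $Z$ with $\{\sigma_\gamma(\xi_n) : \gamma \in \Gamma\} \subseteq_{\eps/2} Z$; since $\sigma_\gamma$ is a 2-norm isometry, this upgrades to $\{\sigma_\gamma(\xi) : \gamma \in \Gamma\} \subseteq_\eps Z$, completing item (i).

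For item (ii), linearity and $\Gamma$-invariance descend from (i) via $\cAP_{Q,N} = \cAP^2_{Q,N} \cap N$. The only remaining point is SOT-closedness: if a net $(\xi_i) \subset \cAP_{Q,N}$ converges to $\xi \in N$ in the strong operator topology on $B(L^2(N))$, then evaluating the net on the vector $\hat{1} \in L^2(N)$ yields $\hat{\xi_i} \to \hat{\xi}$ in $\norm{\cdot}_2$; thus $\hat{\xi} \in \cAP^2_{Q,N}$ by item (i), so $\xi \in \cAP_{Q,N}$. There is no serious obstacle here; the only mild subtlety is correctly interpreting a finitely generated module zonotope as a Minkowski sum $\sum_{f \in F} f \cdot B_Q$ of $Q$-scaled segments, after which the stability claims for sums and scalings become transparent.
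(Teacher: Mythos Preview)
Your proof is correct and is precisely the kind of straightforward verification the paper has in mind; in fact the paper gives no proof at all, stating only that the properties ``follow from a straightforward computation which is left to the interested reader.'' Your interpretation of a finitely generated module zonotope as the Minkowski sum $\sum_{f\in F} f\cdot B_Q$ is the intended one (consistent with Lemma~\ref{lem:cap}), and with that reading the stability under sums, scalings, and $2$-norm limits is exactly as you describe; the reduction of SOT-closedness to $2$-norm closedness via evaluation at $\hat 1$ is also the natural argument.
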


We can now use the above to formally define compact extensions in the von Neumann algebra setting.

\begin{definition}\label{def:relcompact}
(cf. \cite[Definition 3.9]{chifan-das}) Let $\Gamma\act (N,\tau)$ be a tracial $W^*$-dynamical system, and let $Q\subset N$ be a $\Gamma$-invariant von Neumann subalgebra.  We call $N$ a \textit{compact extension} of $Q$ if $\cAP_{Q,N}^2 = L^2(N)$ (or equivalently $\cAP_{Q,N} = N$).
\end{definition}

Before proving Theorem~\ref{thm:compchar}, we make the following observation:

\begin{remark}
    In the setting of Definition~\ref{def:relcompact}, given a conditional Hilbert--Schmidt operator $K\in HS_Q$, we can view $K$ as an operator $K:L^2(N)\to L^2(N)$, or as an element of $L^2(N)\ot_Q L^2(N)$ (see Subsection~\ref{ssec:HSconv}). It is then a straightforward exercise to see that $K:L^2(N)\to L^2(N)$ is $\Gamma$-equivariant if and only if $K\in L^2(N)\ot_Q L^2(N)$ is $\Gamma$-invariant. We will freely make use of this observation without explicit mentioning.
\end{remark}

We can now give the proof of our first main theorem, which we recall here for convenience. 

\compchar*

\begin{proof}
$(1)\Rightarrow (2)$. Assume (1) holds and suppose that $f\in N$ is orthogonal to every function of the form $K\ast_Q \xi$ with $K\in HS_Q$ $\Gamma$-equivariant and $\xi\in L^2(N)$. It suffices to prove that $f=0$.

Consider $f\ot_Q f^*\in L^2(N)\ot_Q L^2(N)$ and let $K\in L^2(N)\ot_Q L^2(N)$ be the unique element of minimal norm in the closed convex hull $\cC$ of $\Orb(f\ot_Q f^*)=\{(\sigma_\gamma\times\sigma_\gamma)(f\ot_Q f^*)\mid \gamma\in\Gamma\}$. Since $f\ot_Q f^* \in HS_Q\subset L^2(N)\ot_Q L^2(N)$, we get that the convex hull of $\Orb(f\ot_Q f^*)$ is contained in $HS_Q$. Moreover, this convex hull is bounded in norm by $\norm{f}^2$. By Lemma~\ref{lem:rBallHScomplete}, it follows that $\cC\subset HS_Q$, and in particular, $K\in HS_Q$. By construction, $K$ is moreover $\Gamma$-invariant, and thus by assumption, $f$ is orthogonal to $K\ast_Q f$. In other words, we have
\begin{align*}
0 = \langle f, K\ast_Q f\rangle_{L^2(N)} &= \tau(f^* (\id\ot E_Q)(K(1\ot_Q f)))\\
&= \tau((\id\ot E_Q)(K(f^*\ot_Q f)))\\
&= \langle K, f\ot_Q f^*\rangle_{L^2(N)\ot_Q L^2(N)}.
\end{align*}
We thus get that $f\ot_Q f^*$ is orthogonal to $K$, and since $K$ is $\Gamma$-invariant, we conclude that $(\sigma_\gamma\times\sigma_\gamma)(f\ot_Q f^*)$ is orthogonal to $K$ for every $\gamma\in\Gamma$. Hence $K$ is orthogonal to itself and therefore $K=0$.

Fix $\eps>0$. Following Lemma \ref{lem:cap}(2), take a finite set $\eta_1, \dots, \eta_k \in N$ and for every $\gamma\in\Gamma$, $\kappa_i(\gamma)\in Q$, such that $\sup_{i,\gamma}\norm{\kappa_i(\gamma)} < \infty$ and 
\[
\norm{\sigma_\gamma(f) -  \sum_{i=1}^k \eta_i\kappa_i(\gamma)}_2 \leq \sqrt{\eps}.
\]
By the previous paragraph, we can find a sequence $(K_n)_n$ in the convex hull of $\Orb(f\ot_Q f^*)$ such that $\norm{K_n}_{L^2(N)\ot_Q L^2(N)}\rarrow 0$. In particular, for every $1\leq i\leq k$, we have $\langle K_n, \eta_i\ot_Q \eta_i^*\rangle_{L^2(N)\ot_Q L^2(N)}\rarrow 0$, and thus
\[
\sum_{i=1}^k \langle K_n, \eta_i\ot_Q \eta_i^*\rangle_{L^2(N)\ot_Q L^2(N)}\rarrow 0.
\]
For a given $n$, writing $K_n$ as a convex combination
\[
K_n = \sum_{j=1}^{m_n} \lambda_j (\sigma_{\gamma_j}\times\sigma_{\gamma_j})(f\ot_Q f^*),
\]
we can expand the inner product to get
\begin{align*}
\sum_{i=1}^k\langle K_n, \eta_i\ot_Q \eta_i^*\rangle_{L^2(N)\ot_Q L^2(N)} &= \sum_{i=1}^k\langle \sum_{j=1}^{m_n} \lambda_j (\sigma_{\gamma_j}\times\sigma_{\gamma_j})(f\ot_Q f^*), \eta_i\ot_Q \eta_i^*\rangle_{L^2(N)\ot_Q L^2(N)}\\
&= \sum_{i=1}^k\sum_{j=1}^{m_n} \lambda_j\langle \sigma_{\gamma_j}(f)^*, \langle \sigma_{\gamma_j}(f), \eta_i\rangle_Q \eta_i^*\rangle_{L^2(N)}\\
&= \sum_{i=1}^k\sum_{j=1}^{m_n} \lambda_j \tau(\sigma_{\gamma_j}(f) E_Q(\sigma_{\gamma_j}(f)^*\eta_i)\eta_i^*)\\
&= \sum_{i=1}^k\sum_{j=1}^{m_n} \lambda_j \tau(E_Q(\sigma_{\gamma_j}(f)^*\eta_i)E_Q(\eta_i^*\sigma_{\gamma_j}(f)))\\
&= \sum_{j=1}^{m_n} \lambda_j \left(\sum_{i=1}^k\norm{E_Q(\sigma_{\gamma_j}(f)^*\eta_i)}_2^2\right).
\end{align*}
Since the latter convex combination converges to zero, we conclude that upon taking $n$ large enough, there exists $\gamma\in\Gamma$ such that for every $1\leq i\leq k$,
\begin{equation}\label{eq:fg}
\norm{E_Q(\sigma_{\gamma}(f)^*\eta_i)}_2 < \frac{\eps}{k\cdot \sup_{i,\gamma}\norm{\kappa_i(\gamma)}}.
\end{equation}
We can then calculate 
\begin{align*}
\norm{\sigma_\gamma(f)}_2^2 &= \tau(\sigma_\gamma(f)^*\sigma_\gamma(f))\\ 
&= \tau(E_Q(\sigma_\gamma(f)^*\sigma_\gamma(f)))\\
&\leq \tau\left(E_Q(\sigma_\gamma(f)^*\sigma_\gamma(f)) + E_Q\left(\left(\sum_{i=1}^k \eta_i\kappa_i(\gamma)\right)^{\!\! *}\left(\sum_{i=1}^k \eta_i\kappa_i(\gamma)\right)\right)\right)\\
&= \tau\bigg(E_Q\left(\left(\sigma_\gamma(f)-\sum_{i=1}^k \eta_i\kappa_i(\gamma)\right)^{\!\! *}\left(\sigma_\gamma(f)-\sum_{i=1}^k \eta_i\kappa_i(\gamma)\right)\right) + \\
&\qquad + E_Q\left(\sigma_\gamma(f)^*\left(\sum_{i=1}^k \eta_i\kappa_i(\gamma)\right)\right) + E_Q\left(\left(\sum_{i=1}^k \eta_i\kappa_i(\gamma)\right)^{\!\! *}\sigma_\gamma(f)\right)\bigg)\\
&\leq \norm{\sigma_\gamma(f) -  \sum_{i=1}^k \eta_i\kappa_i(\gamma)}_2^2 + 2\sum_{i=1}^k \norm{E_Q(\sigma_{\gamma}(f)^*\eta_i)}_2\norm{\kappa_i(\gamma)}\\
&< 3\eps.
\end{align*}
Since $\eps$ was arbitrary, we conclude that $f=0$.

$(2)\Rightarrow (3)$. 
Let $K\in HS_Q$ be $\Gamma$-equivariant and assume w.l.o.g.\ that $K\geq 0$. Write $p_\eps\coloneqq 1_{[\eps, \infty)}(K)$, where we apply Borel functional calculus to $K$ as an operator $K:L^2(N)\rarrow L^2(N)$. Then $p_\eps$ is $\Gamma$-invariant, since it arises as a limit of polynomials in $K$, and thus $\cH_\eps\coloneqq p_\eps L^2(N)$ is a $\Gamma$-invariant $Q$-submodule of $L^2(N)$. 

Moreover, by construction we have $p_\eps K = K p_\eps \geq \eps p_\eps$, or in other words $K\geq \eps 1$ on the submodule $\cH_\eps$. In particular, 
\begin{equation}\label{eq:L2ineq}
\langle K\ast_Q f,f\rangle_{L^2(N)} \geq \eps \langle f,f\rangle_{L^2(N)}
\end{equation}
for every $f\in \cH_\eps$. We claim that moreover
\begin{equation}\label{eq:EZineq}
\abs{E_\cZ(\langle K\ast_Q f,f\rangle_{Q})} \geq \eps E_\cZ(\langle f,f\rangle_{Q}).
\end{equation}
Indeed, the latter inequality can be interpreted as an inequality of functions upon identifying $\cZ\cong L^\infty(X,\mu)$ for some probability space $(X,\mu)$. Now if \eqref{eq:EZineq} fails, then we can find a positive measure subset $A\subset X$ such that, denoting $q = 1_A\in \cZ$,
\[
\abs{E_\cZ(\langle K\ast_Q fq,fq\rangle_{Q})} = \abs{E_\cZ(\langle K\ast_Q f,f\rangle_{Q})}q < \eps E_\cZ(\langle f,f\rangle_{Q})q = \eps E_\cZ(\langle fq,fq\rangle_{Q}).
\]
However, this would imply that $\langle K\ast_Q fq,fq\rangle_{L^2(N)} = \tau(\langle K\ast_Q fq,fq\rangle_{Q}) < \eps \langle fq,fq\rangle_{L^2(N)}$, contradicting \eqref{eq:L2ineq} since $fq\in \cH_\eps$.

Viewing $\cH_\eps$ as a right $\cZ$-module, we note that
\[
\langle f,g\rangle_\cZ = E_\cZ(f^*g) = E_\cZ(E_Q(f^*g)) = E_\cZ(\langle f,g\rangle_Q),
\]
for $f,g\in \cH_\eps$. Since $\cZ$ is moreover abelian, we have a conditional Cauchy-Schwarz inequality: for all $f,g\in \cH_\eps$,
\[
\abs{\langle f,g\rangle_\cZ}^2 \leq \langle f,f\rangle_\cZ \langle g,g\rangle_\cZ,
\]
see \cite{Ka53}. 

Using this together with \eqref{eq:EZineq}, we thus get
\begin{align*}
     \eps E_\cZ(\langle f,f\rangle_Q) &\leq \abs{E_\cZ(\langle K\ast_Q f,f\rangle_{Q})}\\
    &= \abs{\langle K\ast_Q f,f\rangle_{\cZ}}\\
    &\leq \langle K\ast_Q f, K\ast_Q f\rangle_{\cZ}^{\frac{1}{2}} \langle f,f\rangle_{\cZ}^{\frac{1}{2}} \\
    &= E_\cZ(\langle K\ast_Q f,K\ast_Q f\rangle_{Q})^{\frac{1}{2}} E_\cZ(\langle f,f\rangle_{Q})^{\frac{1}{2}}
\end{align*}
Letting $\Omega\subset \cH_\eps$ be any conditionally orthonormal basis, which exists by \cite[Proposition~8.4.11]{AP}, we thus get that
\begin{align*}
    \sum_{f\in \Omega} E_\cZ(\langle f,f\rangle_Q) \leq \frac{1}{\eps^2}\sum_{f\in \Omega} E_\cZ(\langle K\ast_Q f,K\ast_Q f\rangle_{Q}).
\end{align*}
Since $K\in HS_Q$, we conclude from Lemma~\ref{lem:centertrace} that the right-hand side is bounded above by $\frac{C}{\eps^2}1_Q$ for some constant $C>0$. Proposition~\ref{prop:finiterank} thus implies that $\cH_\eps$ is finite rank.

Letting $\eps\rarrow 0$, we get that the union of the finitely generated $\Gamma$-invariant $Q$-submodules of the range of $K$ is dense in the range of $K$. This concludes the proof of $(2)\Rightarrow (3)$.

$(3)\Rightarrow (1)$. Let $\cH$ be a closed $\Gamma$-invariant $Q$-submodule of $L^2(N)$, which is generated as a $Q$-module by a finite orthonormal set of vectors $F\subset \cH$. Take $f\in \cH$. Then the orbit of $f$ is contained in $\sum_{h\in F} \norm{f}h B_Q$, where $B_Q$ denotes the unit ball of $Q$, which is by definition a $Q$-finitely generated module zonotope in $L^2(N)$. By assumption such modules generate $L^2(N)$, and thus we get that the space of conditionally almost periodic elements is dense in $L^2(N)$. Since this space is also closed by Lemma \ref{lem:closed}, it follows that (1) holds.
\end{proof}

\begin{remark}\label{rem-duvenhage}
Let $\Gamma\act (N,\tau)$ be a $W^*$-dynamical system, and assume $Q\subset P\subset N$ are $\Gamma$-invariant von Neumann subalgebras. We note that it follows immediately from Definition \ref{def:relcompact} that if $\Gamma\act (Q\subset N)$ is a compact extension, then also $\Gamma\act (Q\subset P)$ and $\Gamma\act (P\subset N)$ are compact extensions. Together with Theorem \ref{thm:compchar}(3), this answers a question raised in \cite[Section 6]{duvenhage-rel-compact}. Moreover, one can view relatively almost periodic functions as defined in Definition \ref{def:rap} as ``generalized eigenfunctions'' (cf. \cite[Definition 9.10]{glasner}). We thus show that these generalized eigenfunctions arise from conditional Hilbert--Schmidt operators, and through the equivalence with the characterization in terms of finite rank modules in Theorem \ref{thm:compchar}(3), this answers the other question raised in \cite[Section 6]{duvenhage-rel-compact} as well.
\end{remark}

As another corollary of the proof of Theorem \ref{thm:compchar}, we can answer a question from Austin, Eisner and Tao \cite[Question 4.3]{austin}. 
Consider again the basic construction $\langle N,e_Q\rangle$, and recall that $\langle N,e_Q\rangle$ coincides with $Q_{\text{right}}'\cap B(L^2(N))$, where $Q_{\text{right}}$ denotes right multiplication of $Q$. In terms of modules, this implies that if $V\subset L^2(N)$ is a closed right $Q$-submodule, then the orthogonal projection $P_V:L^2(N)\rarrow V$ belongs to $\langle N,e_Q\rangle$. One can use this to define the \textit{$Q$-dimension of V}, by letting
\[
\dim_Q(V)\coloneqq \hat\tau(P_V).
\]
We say that $V$ has finite lifted trace if $\hat\tau(P_V)<\infty$. We note that in general, having finite lifted trace is not equivalent to being finitely generated, e.g., see \cite[Section 8]{AP}. 

Given a tracial $W^*$-dynamical system $\Gamma\act (N,\tau)$, and a $\Gamma$-invariant von Neumann subalgebra $Q\subset N$, \cite[Question 4.3]{austin} asks whether any $\Gamma$-invariant right $Q$-submodule $V\subset L^2(N)$ of finite lifted trace can be approximated by $\Gamma$-invariant finitely generated right $Q$-submodules of $L^2(N)$. \cite[Lemma 4.1]{austin} gives a partial answer, by establishing this fact for $\Z$-actions and under the additional (restrictive) condition that $Q\subset N$ is central. We next observe that the proof of Theorem \ref{thm:compchar} can be used to answer their question in full generality. 

\begin{lemma}\label{lem-aet}
Let $\Gamma\act (N,\tau)$ be a tracial $W^*$-dynamical system, and let $Q\subset N$ be a $\Gamma$-invariant von Neumann subalgebra.  Suppose $V\subset L^2(N)$ is a $\Gamma$-invariant right $Q$-submodule of finite lifted trace. Then for any $\eps>0$, there exists a further $\Gamma$-invariant right $Q$-submodule $V_1\subset V$ such that
\begin{itemize}[nolistsep]
    \item $\hat\tau(P_V-P_{V_1})<\eps$, and
    \item $V_1$ is finitely generated.
\end{itemize}
\end{lemma}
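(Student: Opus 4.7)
My plan is to leverage the machinery in the proof of the implication $(2)\Rightarrow(3)$ of Theorem~\ref{thm:compchar}, applied to the projection $P_V$ itself. Since $\hat\tau(P_V)=\hat\tau(P_V^{*}P_V)<\infty$, the $\Gamma$-invariant projection $P_V$ lies in $L^2(\langle N,e_Q\rangle,\hat\tau)$. Via the canonical isomorphism $L^2(\langle N,e_Q\rangle,\hat\tau)\cong L^2(N)\ot_Q L^2(N)$ recalled in Example~\ref{ex:HS}, the projection $P_V$ is identified with a $\Gamma$-invariant element of $L^2(N)\ot_Q L^2(N)$; equivalently, regarded as an operator on $L^2(N)$, it is a $\Gamma$-invariant conditional Hilbert--Schmidt operator $K$ whose range is exactly $V$. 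This puts us squarely in the setting of the proof of $(2)\Rightarrow(3)$.

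Next I would replay the spectral part of that argument applied to this specific $K$. Borel functional calculus on the positive $\Gamma$-invariant operator $K$ produces $\Gamma$-invariant spectral projections, and Proposition~\ref{prop:nc:CHS}, combined with the lower bound $K\vert_{p_\delta L^2(N)}\geq\delta\,\mathrm{id}$ from the construction, shows that each of the corresponding spectral subspaces is a $\Gamma$-invariant finitely generated right $Q$-submodule contained in the range of $K$, which here equals $V$. Letting the spectral parameter decrease, I obtain an increasing family $\{V_\delta\}_{\delta>0}$ of $\Gamma$-invariant finitely generated right $Q$-submodules of $V$ whose union is dense in $V$, exactly as in the concluding step of the proof of $(2)\Rightarrow(3)$.

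To pass from density to the quantitative bound, I would use the normality of $\hat\tau$ on $\langle N,e_Q\rangle$ and the finite lifted trace hypothesis: the monotone convergence $P_{V_\delta}\uparrow P_V$ in the strong operator topology forces $\hat\tau(P_V-P_{V_\delta})\to 0$ as $\delta\to 0$. Setting $V_1:=V_\delta$ for $\delta$ small enough then yields a $\Gamma$-invariant finitely generated right $Q$-submodule $V_1\subset V$ satisfying $\hat\tau(P_V-P_{V_1})<\eps$, which is the required conclusion.

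The step I expect to require the most care is the finite-generation output at each spectral scale, since the paper takes pains to distinguish finite lifted trace from finite generation (cf.\ the reference to \cite[Section~8]{AP}). The real content of the lemma is thus that, for the specific $\Gamma$-invariant conditional Hilbert--Schmidt operator $K=P_V$, one can nevertheless extract a dense family of $\Gamma$-invariant \emph{finitely generated} submodules approximating $V$—even though $V$ itself need not be finitely generated—and this is precisely the structural input supplied by the combination of Borel functional calculus and Proposition~\ref{prop:nc:CHS} inherited from the proof of Theorem~\ref{thm:compchar}.
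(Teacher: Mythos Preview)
Your proposal is correct and follows essentially the same approach as the paper's proof: both identify $P_V$ as a $\Gamma$-invariant conditional Hilbert--Schmidt operator (equivalently, a $\Gamma$-invariant element of $L^2(N)\ot_Q L^2(N)$) via Example~\ref{ex:HS}, and then invoke the proof of $(2)\Rightarrow(3)$ from Theorem~\ref{thm:compchar}. Your write-up simply spells out the final quantitative step---passing from density of the $V_\delta$ in $V$ to $\hat\tau(P_V-P_{V_\delta})\to 0$ via normality of $\hat\tau$ and the finite-trace hypothesis---which the paper leaves implicit in the phrase ``immediately implies the result.''
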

\begin{proof}
Since $P_V\in \langle N,e_Q\rangle$, it follows from Lemma~\ref{lem:HS} that the condition that $V\subset L^2(N)$ has finite lifted trace exactly means that $P_V$ is a conditional Hilbert--Schmidt operator. Therefore, if $V\subset L^2(N)$ is a $\Gamma$-invariant right $Q$-submodule of finite lifted trace, then $P_V$ is a $\Gamma$-equivariant conditional Hilbert--Schmidt operator. The proof of $(2)\Rightarrow (3)$ in Theorem \ref{thm:compchar} now immediately implies the result.
\end{proof}

We saw in Lemma \ref{lem:closed} that the set $\cAP^2_{Q,N}$ of conditionally almost periodic elements in $L^2(N)$ is always a closed Hilbert subspace. However its intersection $\cAP_{Q,N}$ with $N$ is in general only an SOT-closed linear subspace of $N$ that may not be closed under multiplication and/or involution. We describe below an example of this phenomenon due to Austin, Eisner and Tao (see \cite[Example 4.2]{austin}). If this happens, one would not be able to reasonably consider $\cAP_{Q,N}$ as the maximal compact extension of $Q$ inside $N$ within the framework of actions on von Neumann algebras. 

\begin{example}\label{ex:AET}
Consider $Q\coloneqq L(\Z)=L(\langle a\rangle)\subset L(\langle a,b\rangle) = L(\bF_2) \eqqcolon N$. We consider the action of $\Z$ on $N$ determined by the automorphism $\alpha$ of $\bF_2$ given by $\alpha(a) = a$ and $\alpha(b) = ba$, which obviously leaves $Q$ invariant. It is then easy to see that $b$ is almost periodic relative to $Q$, as it is contained in the $\Z$-invariant $Q$-module $V$ of rank 1 given by $V\coloneqq \overline{\mathrm{span}}\{\widehat{ba^n}\mid n\in \Z\}=bL^2(Q)\subset L^2(N)$. However, $b^2$ is not almost periodic relative to $Q$, since the elements $\alpha^n(b^2) = ba^nba^n$ for $n\in \Z$ are pairwise orthogonal relative to $Q$. 
    
Furthermore, one can observe that it is easy to get an ergodic such example, i.e., an action $\Gamma\act N$ such that the space of fixed points $N^\Gamma$ is trivial ($N^\Gamma = \C 1$). Indeed, one can change the above action on $Q=L(\Z)\cong L^\infty(\bT)$ to be associated to any ergodic action on $\bT$, such as irrational rotation. We would still have that $b$ is contained in $V$ as defined above, and furthermore $\alpha^n(b^2) = ba\alpha^{n-1}(a)ba\alpha^{n-1}(a)$, which is still a sequence of pairwise orthogonal elements relative to $Q$. 
\end{example}

In a preprint of Chifan and Peterson \cite{chifan-peterson}, it is observed that $\cAP_{Q,N}$ is a von Neumann subalgebra under certain additional assumptions. For this, we first introduce the following definition: We call a von Neumann subalgebra $Q\subset N$ \textit{quasi-regular} if the quasi-normalizer $\cQN_N(Q)\coloneqq\{x\in N\mid \exists x_1, \ldots, x_k\in N: xQ\subset \sum_{i=1}^k Qx_i \text{ and } Qx\subset \sum_{i=1}^k x_iQ\}$ generates $N$ as a von Neumann algebra.

Unwrapping the definitions, it is not too hard to see that under the additional assumption that $Q$ is quasi-regular in $N$, we do always get that the conditionally almost periodic elements form a von Neumann subalgebra. This was first observed by Chifan and Peterson in \cite{chifan-peterson}, but we provide a proof for completeness.

\begin{proposition}\label{prop:quasireg}\cite{chifan-peterson}
Let $\Gamma\act (N,\tau)$ be a tracial $W^*$-dynamical system, and let $Q\subset N$ be a quasi-regular and $\Gamma$-invariant von Neumann subalgebra. Then $\cAP_{Q,N}$ forms a von Neumann subalgebra of $N$.
\end{proposition}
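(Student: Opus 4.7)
The plan is to verify that $\cAP_{Q,N}$ is closed under multiplication and involution; combined with Lemma~\ref{lem:closed}(ii), this will imply it is a von Neumann subalgebra of $N$.

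The key preparatory step will be an enhancement of Lemma~\ref{lem:cap}(2) that uses quasi-regularity: for any $\xi\in\cAP_{Q,N}$ and $\eps>0$ I will produce an approximation $\|\sigma_\gamma(\xi)-\sum_{i=1}^k \eta_i\kappa_i(\gamma)\|_2\leq\eps$ (uniformly in $\gamma\in\Gamma$) whose \emph{generators} $\eta_i$ lie in $\cQN_N(Q)$, with uniformly bounded $Q$-valued coefficients $\kappa_i(\gamma)$. I would start from any approximation given by Lemma~\ref{lem:cap}(2) with generators $\eta_i'\in N$, and then use that $\cQN_N(Q)$ is a $\ast$-subalgebra of $N$ with $\cQN_N(Q)''=N$: Kaplansky density then yields $\eta_i\in\cQN_N(Q)$ of uniformly bounded operator norm with $\|\eta_i-\eta_i'\|_2$ arbitrarily small. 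The estimate $\|(\eta_i-\eta_i')\kappa_i(\gamma)\|_2\leq\|\eta_i-\eta_i'\|_2\cdot\|\kappa_i(\gamma)\|_\infty$ then preserves the approximation up to a uniformly small error, thanks to the uniform control on $\|\kappa_i(\gamma)\|_\infty$.

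Given this enhancement, both closure properties will follow via Pimsner--Popa basis expansions. For multiplication, I would take $\xi,\eta\in\cAP_{Q,N}$, approximate $\sigma_\gamma(\xi)\approx\sum_i \eta_i\kappa_i(\gamma)$ and $\sigma_\gamma(\eta)\approx\sum_j \mu_j\lambda_j(\gamma)$ with $\eta_i,\mu_j\in\cQN_N(Q)$, and then write
\[
\sigma_\gamma(\xi\eta)\approx\sum_{i,j}\eta_i\bigl(\kappa_i(\gamma)\mu_j\bigr)\lambda_j(\gamma).
\]
The quasi-normalizer property of $\mu_j$ gives $Q\mu_j\subseteq\sum_l \mu_{j,l}Q$ for some fixed finite family; fixing a Pimsner--Popa basis for this right $Q$-module and expanding $\kappa_i(\gamma)\mu_j$ in it will produce coefficients $q_{i,j,l}(\gamma)\in Q$ that are uniformly bounded (since $\|\kappa_i(\gamma)\mu_j\|_\infty\leq\|\kappa_i(\gamma)\|_\infty\|\mu_j\|_\infty$ is uniformly bounded). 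This rewrites the right-hand side as a sum of fixed vectors $\eta_i\mu_{j,l}$ against bounded $Q$-valued coefficients $q_{i,j,l}(\gamma)\lambda_j(\gamma)$, so Lemma~\ref{lem:cap}(2) yields $\xi\eta\in\cAP_{Q,N}$. For involution, adjoining the approximation of $\xi$ gives $\sigma_\gamma(\xi^*)\approx\sum_i\kappa_i(\gamma)^*\eta_i^*$; since $\eta_i^*\in\cQN_N(Q)$, the relation $Q\eta_i^*\subseteq\sum_n\tilde\eta_{i,n}Q$ together with an analogous Pimsner--Popa expansion rewrites this in the required right-module form with bounded $Q$-valued coefficients.

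The main obstacle, and the point where quasi-regularity is decisively used, is the preparatory step of refining arbitrary $L^2$-approximating generators into generators inside $\cQN_N(Q)$; the subsequent algebraic manipulations are then routine consequences of the defining property of the quasi-normalizer. The Austin--Eisner--Tao Example~\ref{ex:AET} shows that without a hypothesis of this nature $\cAP_{Q,N}$ may fail to be closed under multiplication, so some such refinement of the approximating set is unavoidable.
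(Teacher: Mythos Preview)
Your proposal is correct and follows the same overall strategy as the paper: invoke Lemma~\ref{lem:closed} for linearity and SOT-closure, refine the approximating generators into $\cQN_N(Q)$ via Kaplansky density, and then exploit the quasi-normalizer structure to rewrite products (and adjoints) in right-module form with uniformly bounded $Q$-coefficients. The difference lies in the implementation of this last step. The paper works with the $Q$-\emph{bimodule} $V$ generated by the $\eta_j^{2}$'s and invokes \cite[Lemma~3.5]{FGS} (cf.\ \cite[Theorem~1.4.2]{popa-betti}) to approximate $P_V$ by maps $\eta\mapsto\sum_m x_m E_Q(x_m^*\eta)$ with $x_m\in N$, modulo central projections $z_n\nearrow 1$. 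You instead use the one-sided inclusion $Q\mu_j\subseteq\sum_l\mu_{j,l}Q$ directly and expand $\kappa_i(\gamma)\mu_j$ in a Pimsner--Popa basis $\{e_m\}$ of the finitely generated right $Q$-module $\overline{\sum_l\mu_{j,l}Q}$, with the uniform coefficient bound coming from $\|\langle e_m,\kappa_i(\gamma)\mu_j\rangle_Q\|_\infty\leq\|L_{e_m}\|\,\|\kappa_i(\gamma)\|_\infty\|\mu_j\|_\infty$. Your route is more self-contained (no external citation needed), at the cost that the resulting generators $\eta_i e_m$ may lie only in $L^2(N)$ rather than $N$; this is harmless here since Lemma~\ref{lem:cap}(2) explicitly allows $L^2$-generators.
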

\begin{proof}
We already know from Lemma~\ref{lem:closed} that $\cAP\coloneqq \cAP_{Q,N}$ is a linear subspace which is closed in the strong operator topology. We thus only have to argue that it is closed under multiplication and convolution. 

Let $\xi_1,\xi_2\in \cAP$ and fix $\eps>0$. By definition, for $t=1,2$ we can find $\eta_1^{t}, \ldots, \eta_k^{t}\in N$ such that for every $\gamma\in \Gamma$, there exists $\kappa_i^{t}(\gamma)\in Q$ such that
\begin{align*}
\sup_{i,\gamma,t}\norm{\kappa_i^{t}(\gamma)}_\infty&\leq M<\infty,\\
\norm{\sigma_\gamma(\xi_1) - \sum_{i=1}^k \eta_i^{1}\kappa_i^{1}(\gamma)}_2 &\leq \frac{\eps}{4\norm{\xi_2}_\infty}, \text{ and}\\
\norm{\sigma_\gamma(\xi_2) - \sum_{i=1}^k \eta_i^{2}\kappa_i^{2}(\gamma)}_2 &\leq \frac{\eps}{4M\sum_i\norm{\eta_i^1}_\infty}.
\end{align*}
In particular, we have
\[
\norm{\sigma_\gamma(\xi_1\xi_2) - \sum_{i,j=1}^k \eta_i^{1}\kappa_i^{1}(\gamma)\eta_j^{2}\kappa_j^{2}(\gamma)}_2 \leq \frac{\eps}{2}.
\]
Since $\cQN_N(Q)''=N$, we can moreover assume that $\eta_i^{t}\in \cQN_N(Q)$.

Denote by $V\subset L^2(N)$ the $Q$-bimodule generated by $\eta_1^{2}, \ldots, \eta_k^{2}$, and by $P_V:L^2(N)\rightarrow V$ the orthogonal projection onto $V$. Using \cite[Lemma~3.5]{FGS} (cf. \cite[Theorem~1.4.2]{popa-betti}), we can find a sequence of projections $z_n\in Q'\cap N$ such that $z_n\rightarrow 1$ in the strong operator topology, together with finitely many mutually $Q$-orthogonal elements $x_{n,1}, \ldots, x_{n,l}\in N$ for every $n$, such that for every $\eta\in N$ we have
\begin{equation}\label{eq:E_Q}
z_nP_Vz_n(\eta) = \sum_m x_{n,m}E_Q(x_{n,m}^*\eta).
\end{equation}
Since $P_V-z_nP_Vz_n\rightarrow 0$ in the strong operator topology as $n\rightarrow\infty$, there exists $n$ large enough such that for all $j$,
\[
\norm{P_V(\eta_j^{2}) - z_nP_Vz_n(\eta_j^{2})}_2\leq \frac{\eps}{2 M^2 k^2 \max_i \norm{\eta_i^1}_\infty}.
\] 
Fix such $n$ and denote $x_m\coloneqq x_{n,m}$. Since $V$ is a $Q$-bimodule and $z_n\in Q'\cap N$, we note that for every $a,b\in Q$ and $\eta\in N$ we have
\[
P_V(a\eta b)=aP_V(\eta)b, \quad\text{and}\quad z_nP_Vz_n(a\eta b) = az_nP_Vz_n(\eta)b.
\]
Hence, using \eqref{eq:E_Q}, we get
\begin{align*}
&\norm{\sum_{i,j} \kappa_i^1(\gamma)\eta_j^{2} \kappa_j^2(\gamma)- \sum_{i,j,m} x_m E_Q(x_m^* \kappa_i^1(\gamma) \eta_j^{2} \kappa_j^2(\gamma))}_2  \\
&\qquad\qquad=\norm{\sum_{i,j} \kappa_i^1(\gamma)\eta_j^{2}\kappa_j^2(\gamma) - z_nP_Vz_n\big(\sum_{i,j} \kappa_i^1(\gamma)\eta_j^{2}\kappa_j^2(\gamma)\big)}_2\\
&\qquad\qquad\leq \sum_{i,j} \norm{P_V(\kappa_i^1(\gamma)\eta_j^{2}\kappa_j^2(\gamma)) - z_nP_Vz_n(\kappa_i^1(\gamma)\eta_j^{2}\kappa_j^2(\gamma))}_2\\ 
&\qquad\qquad\leq \sum_{i,j} M^2\norm{P_V(\eta_j^{2}) - z_nP_Vz_n(\eta_j^{2})}_2\\
&\qquad\qquad\leq \frac{\eps}{2\max_i \norm{\eta_i^1}_\infty}.
\end{align*}
Denoting $\kappa_{i,m}(\gamma)\coloneqq \sum_{j} E_Q(x_m^* \kappa_i^1(\gamma) \eta_j^{2} \kappa_j^2(\gamma))\in Q$, it is then a straightforward calculation to check that 
\[
\sup_{i,m,\gamma}\norm{\kappa_{i,m}(\gamma)}_\infty \leq M^2 \max_{m} \norm{x_m}_\infty \max_j \norm{\eta_j^{2}}_\infty<\infty,
\]
and
\begin{align*}
&\norm{\sigma_\gamma(\xi_1\xi_2) - \sum_{i,m} (\eta_i^1 x_m)\kappa_{i,m}(\gamma)}_2 \\
&\qquad\leq \frac{\eps}{2} + \norm{\sum_i \eta_i^1\Big(\sum_j \kappa_i^{1}(\gamma)\eta_j^{2}\kappa_j^{2}(\gamma) - \sum_{j,m} x_m E_Q(x_m^* \kappa_i^1(\gamma) \eta_j^{2} \kappa_j^2(\gamma))\Big)}_2\\
&\qquad\leq \frac{\eps}{2} + \max_i \norm{\eta_i^1}_\infty\frac{\eps}{2 \max_i \norm{\eta_i^1}_\infty}\\
&\qquad= \eps.
\end{align*}
Hence $\{\eta_i^1x_m\}_{i,m}$ and $\kappa_{i,m}(\gamma)$ witness that $\xi_1\xi_2\in \cAP$. The proof that $\cAP$ is closed under convolution proceeds similarly and is left to the reader.
\end{proof}

This now leads to the following definition. 

\begin{definition}
Let $\Gamma\act (N,\tau)$ be a tracial $W^*$-dynamical system, and let $Q\subset N$ be a $\Gamma$-invariant von Neumann subalgebra. Assume $Q\subset N$ is quasi-regular. Then the von Neumann subalgebra $\cAP_{Q,N}\subset N$ consisting of all almost periodic elements relative to $Q$ is called the \textit{maximal compact extension} of $Q$ inside $N$.
\end{definition}

\begin{remark}\label{rk:MaxCpt}
	It follows easily from the proof of Theorem \ref{thm:compchar}, that if the space $\cAP_Q$ of almost periodic elements relative to $Q$ forms a von Neumann subalgebra, it can equivalently be described as 
	\[
	\cAP_Q = \text{closure of } \mathrm{span}\{K\ast_Q x\mid K\in HS_Q\; \Gamma\text{-equivariant}, x\in N\}.
	\]
\end{remark}

\begin{example}
    Let $(N,\tau,\sigma)$ be a $W^*$-dynamical $\Gamma$-system. Then we can consider the ``maximal compact part'' of the action, given by $\cAP_\C\subset N$.
\end{example}

\begin{remark}\label{ex:qreg}
Example \ref{ex:AET} does not only give an example of an action where the almost periodic elements don't form a von Neumann subalgebra, it also shows that even if $Q\subset N$ is quasi-regular, it is not necessarily true that also $\cAP_{Q}\subset N$ is quasi-regular again, nor that $\cAP_{\cAP_{Q}}\subset N$ is a von Neumann subalgebra. Indeed, in the aforementioned example, we see that $Q=\cAP_{\C}$, but $\cAP_Q\subset N$ is not closed under multiplication.
\end{remark}

\section{Joinings}\label{sec:joingings}

Analogous to classical ergodic theory (e.g., see \cite[\S 6]{glasner}), one can consider \textit{joinings} of two $W^*$-dynamical systems. This notion is also considered for general $W^*$-dynamical systems in \cite{joining-1}. In the tracial setting, the definition can be phrased as follows.

\begin{definition}\label{def:joining}
    Let $\Gamma \act (M,\tau_M)$ and $\Gamma\act (N,\tau_N)$ be tracial $W^*$-dynamical systems, and suppose $Q\subset M,N$ is a common $\Gamma$-invariant von Neumann subalgebra. A \textit{joining} of $M$ and $N$ over $Q$ is a $\Gamma$-invariant state $\varphi$ on the algebraic tensor product $M\algot N^{\op}$ such that $\varphi|_{M\ot 1} = \tau_M$, $\varphi|_{1\ot N^{\op}} = \tau_{N}$ and $\varphi|_{Q\algot Q^{\op}} = \Delta_Q$, where $\Delta_Q$ is the ``diagonal'' state on $Q\algot Q^{\op}$ given by $\Delta_Q(a\ot b)=\tau_Q(ab)$.
\end{definition}

\begin{remark}
Note that we can only define the joining $\varphi$ on the algebraic tensor product $M\algot N^{\op}$, since the diagonal state $\Delta_Q$ is in general not bounded and not normal, and thus doesn't necessarily extend to the von Neumann algebraic tensor product $Q\otb Q$. Indeed, if $Q$ is for instance a separable II$_1$ factor, then by Connes' theorem \cite[Theorem~5.1]{connes-hyperfinite}, $\Delta_Q$ is not min-continuous on $Q\algot Q$ unless $Q$ is hyperfinite. We would like to thank Cyril Houdayer for kindly pointing out to us that the definition as it appeared in the first arXiv-version was incorrect. However, we note that all results involving joinings from that first version remain true with the above corrected definition (where we modified the proofs accordingly).
\end{remark}

\begin{remark}
    Through the GNS-construction, one can also formulate joinings in terms of pointed bimodules and therefore also ucp maps. We refer to Lemma~\ref{lem:disjoint}, where this is needed and worked out, and to \cite{joining-1} for further results and details.
\end{remark}

\begin{example}
    Given $M,N,Q$ as in Definition \ref{def:joining}, there always exists a canonical joining $\varphi$ of $M$ and $N$ over $Q$ defined by $\varphi(x\ot y) = \Delta_Q(E_Q(x)\ot E_Q(y)) = \tau_Q(E_Q(x)E_Q(y))$. 
    We call this joining the \textit{relatively independent joining of $M$ and $N$ over $Q$} and denote it by $\tau_M\ot_Q\tau_N$. 
\end{example}

We will often want to work with the GNS construction (i.e. separation-completion) of the algebraic tensor product $M\algot N^\op$ associated with $\tau_M\ot_Q\tau_N$, so we fix its notation now.

\begin{definition}
Given $M,N,Q$ as in Definition \ref{def:joining}, we denote by $L^2(M\algot N^\op,\tau_M\ot_Q\tau_N)$ the separation-completion of $M\algot N^\op$ with respect to the relatively independent joining $\tau_M\ot_Q\tau_N$. 
\end{definition}

We observe next that this GNS representation can be identified with the usual Connes fusion tensor product of the standard representations $L^2(M)$ and $L^2(N)$ over $Q$. Nevertheless, it will sometimes be useful to take the joining point of view.

\begin{lemma}\label{lem:joinConnes}
Given $M,N,Q$ as in Definition \ref{def:joining}, we have a canonical isomorphism of Hilbert spaces
\[
L^2(M\algot N^\op,\tau_M\ot_Q\tau_N) \cong L^2(M,\tau_M)\ot_Q L^2(N^\op,\tau_N).
\]
\end{lemma}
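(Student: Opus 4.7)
The plan is to build the claimed isomorphism explicitly by sending algebraic elementary tensors to Connes' fusion elementary tensors, and then verify that inner products agree on the dense subspace of simple tensors.

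First, I would define a linear map $\Phi$ on $M\algot N^\op$ by $\Phi(x\ot y^\op) = \hat x\ot_Q \hat y$, where I view $L^2(N^\op,\tau_N)$ as $L^2(N,\tau_N)$ (identical as Hilbert spaces since $\tau_N$ is tracial) equipped with the left $Q$-module structure coming from left multiplication by $Q\subset N$; this is precisely the bimodule structure on the second factor one must use for Connes' fusion in order to match the state $\tau_M\ot_Q\tau_N$.

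Second, I would unpack both inner products on simple tensors. On the GNS side, using the opposite multiplication rule $(y_1^\op)^* y_2^\op = (y_2 y_1^*)^\op$, one obtains
\[
\langle x_1\ot y_1^\op,\; x_2\ot y_2^\op\rangle_{\tau_M\ot_Q\tau_N} = \tau_Q\bigl(E_Q(x_1^* x_2)\,E_Q(y_2 y_1^*)\bigr).
\]
On the Connes' fusion side, combining the formula recalled in Section~\ref{sec:bimod} with the identity $\langle \hat x_1,\hat x_2\rangle_Q = E_Q(x_1^* x_2)$ from Example~\ref{ex:PsubM}, one obtains
\[
\langle \hat x_1\ot_Q \hat y_1,\; \hat x_2\ot_Q \hat y_2\rangle = \langle \hat y_1,\; E_Q(x_1^* x_2)\hat y_2\rangle_{L^2(N)} = \tau_N\bigl(y_1^* E_Q(x_1^* x_2)\, y_2\bigr).
\]
The two expressions agree in one line via the trace property and the $E_Q$-module identity: $\tau_N(y_1^* E_Q(x_1^* x_2) y_2) = \tau_N(E_Q(x_1^* x_2) y_2 y_1^*) = \tau_Q(E_Q(x_1^* x_2) E_Q(y_2 y_1^*))$.

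Third, since $\Phi$ preserves the sesquilinear form, it automatically annihilates the GNS null space and descends to an isometry from $L^2(M\algot N^\op,\tau_M\ot_Q\tau_N)$ into $L^2(M)\ot_Q L^2(N^\op)$. Surjectivity is immediate: the image contains every simple tensor $\hat x\ot_Q \hat y$ with $x\in M$, $y\in N$, and these span a dense subspace of Connes' fusion (using that every element of $M$ is left $Q$-bounded in $L^2(M)$, so the algebraic tensor product $M\odot N$ sits densely inside the fusion after separation-completion). Thus $\Phi$ extends to the desired unitary.

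The only real subtlety, and hence the main thing to be careful about, is the bookkeeping of the various $Q$-module structures arising from the identification $L^2(N^\op)=L^2(N)$: once one fixes the convention that the left $Q$-action on $L^2(N^\op)$ used in the fusion corresponds to left multiplication by $Q$ on $L^2(N)$, the rest reduces to a single direct computation using only the defining properties of the trace and the conditional expectation.
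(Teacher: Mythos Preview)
Your proposal is correct and is precisely the ``straightforward computation'' the paper explicitly leaves to the reader; there is no alternative argument in the paper to compare against. Your careful handling of the $Q$-module convention on $L^2(N^\op)$ is exactly the only point requiring attention, and your verification that the two sesquilinear forms agree via the trace property and $Q$-bimodularity of $E_Q$ is the intended computation.
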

\begin{proof}
This is a straightforward computation and is left to the interested reader.
\end{proof}

The following proposition is an analogue of a well-known important result in Furstenberg--Zimmer structure theory (see, e.g., \cite{furstenberg-book}) in the von Neumann algebraic setting, and it will make use crucially of the identifications in Theorem~\ref{thm:compchar}. It will for instance be useful later when discussing the Host--Kra--Ziegler tower of compact extensions coming from cubic systems. We state the result for a joining of a tracial $W^*$-dynamical system with itself, since that is the setting in which we will use it and in which we discussed conditional Hilbert--Schmidt operators. 

\begin{proposition}\label{prop:invJoin}
Let $\Gamma\act (N,\tau_N)$ be a tracial $W^*$-dynamical system, and suppose $Q\subset N$ is a $\Gamma$-invariant von Neumann subalgebra. If $\xi\in HS_Q\subset L^2(N)\ot_Q L^2(N^\op)$ is $\Gamma$-invariant, then $\xi\in \cAP^2_{Q,N}\ot_Q \cAP^2_{Q,N}$.
\end{proposition}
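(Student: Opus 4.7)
The idea is to mimic the functional calculus argument used in the implication $(2)\Rightarrow(3)$ of Theorem~\ref{thm:compchar}, but now for an operator going between two different $Q$-modules. Analogously to Example~\ref{ex:HS}, where an element of $L^2(N)\ot_Q L^2(N)$ is identified with a conditional Hilbert--Schmidt operator on $L^2(N)$, I would first associate to $\xi$ a conditional Hilbert--Schmidt operator $T_\xi:L^2(N^\op)\to L^2(M)$, via the identification of Connes' fusion tensor products with Hilbert--Schmidt kernels (here using $L^2(M)$ as a right $Q$-module and $L^2(N^\op)$ as a left $Q$-module, or equivalently $L^2(N)$ as a right $Q$-module under the flip). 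The fact that $\xi$ is $\Gamma$-invariant under the diagonal action translates immediately into $\Gamma$-equivariance of $T_\xi$, i.e., $\sigma_\gamma^M\circ T_\xi=T_\xi\circ\sigma_\gamma^{N^\op}$ for every $\gamma\in\Gamma$.

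Next, consider the positive operators $T_\xi T_\xi^*\in B(L^2(M))$ and $T_\xi^* T_\xi\in B(L^2(N^\op))$. Both are conditional Hilbert--Schmidt (as they are products of conditional Hilbert--Schmidt operators with bounded $Q$-module maps) and both commute with the $\Gamma$-action by the equivariance of $T_\xi$. For each $\eps>0$, applying Borel functional calculus, set
\[
p_\eps \coloneqq 1_{[\eps,\infty)}(T_\xi T_\xi^*)\in B(L^2(M)),\qquad q_\eps\coloneqq 1_{[\eps,\infty)}(T_\xi^* T_\xi)\in B(L^2(N^\op)).
\]
These are $\Gamma$-invariant $Q$-module projections onto closed $\Gamma$-invariant $Q$-submodules $\cH_\eps^M\subset L^2(M)$ and $\cH_\eps^N\subset L^2(N^\op)$. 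Arguing exactly as in the proof of $(2)\Rightarrow(3)$ of Theorem~\ref{thm:compchar} (invoking Proposition~\ref{prop:nc:CHS}), each of these submodules is finitely generated as a $Q$-module, and therefore $\cH_\eps^M\subseteq \cAP^2_{Q,M}$ and $\cH_\eps^N\subseteq \cAP^2_{Q,N}$ by Theorem~\ref{thm:compchar}(3) (in the relevant direction).

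Finally, since $p_\eps$ and $q_\eps$ are $Q$-module maps acting on the respective tensor factors, they induce a bounded compression operator on the Connes' fusion tensor product, yielding
\[
\xi_\eps \;\coloneqq\; (p_\eps\ot_Q q_\eps)\,\xi \;\in\; \cH_\eps^M\ot_Q \cH_\eps^N \;\subseteq\; \cAP^2_{Q,M}\ot_Q \cAP^2_{Q,N}.
\]
As $\eps\downarrow 0$, $p_\eps$ increases strongly to the orthogonal projection onto $\overline{\mathrm{range}(T_\xi)}$ and $q_\eps$ to the orthogonal projection onto $\overline{\mathrm{range}(T_\xi^*)}$; a direct computation (using that $\xi$ is supported, as a kernel, on the product of these two ranges) then shows $\xi_\eps\to\xi$ in $L^2(M)\ot_Q L^2(N^\op)$. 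Since $\cAP^2_{Q,M}\ot_Q \cAP^2_{Q,N}$ is closed in $L^2(M)\ot_Q L^2(N^\op)$ (as the Connes fusion of two closed submodules), the conclusion follows.

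\textbf{Main obstacle.} The bookkeeping of the bimodule structures is the subtle point: one must check that the identification $\xi\leftrightarrow T_\xi$ respects the $Q$-module structures on the two sides (with the convention for $L^2(N^\op)$ versus $L^2(N)$), that the spectral projections $p_\eps,q_\eps$ really extend to $Q$-balanced operators on the fused tensor product, and that the convergence $\xi_\eps\to\xi$ holds in the fusion norm rather than merely weakly. These are mild generalizations of arguments already used in Section~\ref{sec-compact}, but they require care to transport between $L^2(M)$, $L^2(N)$, and their opposites.
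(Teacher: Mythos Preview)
Your proposal is correct and follows essentially the same route as the paper: identify $\xi$ with a $\Gamma$-equivariant conditional Hilbert--Schmidt operator, cut down via spectral projections $1_{[\eps,\infty)}$ to land in finitely generated $\Gamma$-invariant $Q$-submodules (hence in the $\cAP^2$ spaces), and let $\eps\to 0$. The paper's proof is terser and applies the spectral projection on only one side, implicitly using the relation $p_\eps T_\xi = T_\xi q_\eps$ (from functional calculus / polar decomposition) to conclude that $p_\eps\xi$ already lies in $\cAP^2_{Q,M}\ot_Q \cAP^2_{Q,N}$; your two-sided compression $(p_\eps\ot_Q q_\eps)\xi$ gives the same element and makes this point explicit.
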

\begin{proof}
Assume w.l.o.g. that $\xi\geq 0$ as an operator on $L^2(N)$. As in the proof of $(2)\Rightarrow (3)$ in Theorem~\ref{thm:compchar} we see that the range of $p_\eps\coloneqq 1_{[\eps,\infty)}(\xi):L^2(N)\rarrow L^2(N)$ is a $Q$-finitely generated submodule of $L^2(N)$. This easily implies (by Theorem~\ref{thm:compchar}(3)) that $p_\eps \xi \in \cAP^2_{Q,M}\ot_Q \cAP^2_{Q,N}$. Since $p_\eps \xi\rarrow \xi$ as $\eps\rarrow 0$, we get that also $\xi\in \cAP^2_{Q,M}\ot_Q \cAP^2_{Q,N}$. 
\end{proof}

\begin{corollary}\label{cor:invariantvectors}
    Let $\Gamma\act (N,\tau_N)$ be a tracial $W^*$-dynamical system. Denote by $I\subset N\otb N$ the subalgebra of invariant vectors for the diagonal product action of $\Gamma$ on $N\otb N$. Then $I\subset \cAP_N\otb \cAP_N$.
\end{corollary}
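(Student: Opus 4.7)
The aim is to derive the corollary as a direct specialization of Proposition~\ref{prop:invJoin} to the trivial common subalgebra $Q=\C$, with some routine bookkeeping to pass between the tensor product $M\otb N$ featuring in the corollary's statement and the ambient space $L^2(M)\ot_\C L^2(N^\op)$ used in the proposition. First I would check that $\cAP_M\coloneqq\cAP_{\C,M}$ and $\cAP_N\coloneqq\cAP_{\C,N}$ are genuine $\Gamma$-invariant von Neumann subalgebras, so that $\cAP_M\otb\cAP_N$ makes sense. This is automatic: the trivial inclusion $\C\subset M$ is quasi-regular because for any $x\in M$ we have $x\C=\C x=\C\cdot x$, so taking $x_1=x$ shows $x\in\cQN_M(\C)$ and hence $\cQN_M(\C)''=M$. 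Proposition~\ref{prop:quasireg} then yields $\cAP_M$ as a $\Gamma$-invariant von Neumann subalgebra of $M$; likewise for $\cAP_N$.

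Given $x\in I$, I would then view $x$ as a vector of $L^2(M\otb N)\cong L^2(M)\ot L^2(N)\cong L^2(M)\ot_\C L^2(N^\op)$, where the last isomorphism uses the canonical unitary $L^2(N)\cong L^2(N^\op)$ (which intertwines the $\Gamma$-action, since $\sigma_\gamma$ acts the same on the underlying vector spaces of $N$ and $N^\op$). Under this identification, $x$ becomes a $\Gamma$-invariant vector in $L^2(M)\ot_\C L^2(N^\op)$, so Proposition~\ref{prop:invJoin} applied with $Q=\C$ gives $x\in\cAP^2_M\ot_\C\cAP^2_N$.

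Finally, using that $\cAP_M\otb\cAP_N\subset M\otb N$ is a von Neumann subalgebra whose $L^2$-space is precisely $\cAP^2_M\ot_\C\cAP^2_N$, the element $x\in M\otb N$ lying in this subspace is fixed by the trace-preserving conditional expectation $E_{\cAP_M\otb\cAP_N}$ and hence belongs to $\cAP_M\otb\cAP_N$, proving the corollary. The only real obstacle I foresee is the careful tracking of the opposite-algebra identifications between the statement of the corollary and Proposition~\ref{prop:invJoin}; the substantive analytic content is entirely contained in that proposition.
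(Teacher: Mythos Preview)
Your proposal is correct and follows exactly the route the paper intends: the corollary is stated without proof as an immediate specialization of Proposition~\ref{prop:invJoin} to $Q=\C$, and you have carefully filled in the routine identifications (quasi-regularity of $\C\subset M$, $L^2(M\otb N)\cong L^2(M)\ot_\C L^2(N^\op)$, and the passage from the $L^2$-level back to the von Neumann algebra via the conditional expectation).
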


In the commutative setting, the identification in Lemma \ref{lem:joinConnes} can be carried over to the measure space level\footnote{Similar constructions exist for non-standard spaces and in the category of probability algebras by working with canonical models and disintegrations, e.g., see \cite{jt-foundational}.}, by observing that $X_1\times X_2$ equipped with the relatively independent joining measure $\mu_1\times_Y\mu_2$ is isomorphic as a measure algebra to $(X_1\times_Y X_2,\mu_1\times_Y \mu_2)$ as defined in Subsection \ref{ssec:relprod}, where $(Y,\nu)$ is a common factor of $(X_1,\mu_1)$ and $(X_2,\mu_2)$. In the von Neumann algebraic setting however, the ``relatively independent product'' $N_1\ot_Q N_2$ of $Q\subset N_1$ and $Q\subset N_2$ is (usually) not well-defined as a tracial von Neumann algebra if approached the same way. Given a tracial von Neumann algebra $Q$, a right $Q$-module $\cH$ and a left $Q$-module $\cK$, the Connes fusion tensor product $\cH\ot_Q \cK$ is a Hilbert space, but not necessarily a $Q$-module. Indeed, one could only act on $\cH$ (respectively $\cK$) on the right (respectively left), which causes problems with the definition of the Connes fusion tensor product, since it is easy to see that $\xi q\ot\eta = \xi\ot q\eta$ for any $q\in Q$ and bounded vectors $\xi\in\cH$, $\eta\in \cK$, but $Q$ is not necessarily abelian. Moreover if $\cH$ and $\cK$ were $Q$-bimodules, $\cH\ot_Q \cK$ would not be a (right or left) $Q$-module in such a way as to identify the original (right, respectively left) actions on $\cH$ and $\cK$ used to construct $\cH\ot_Q \cK$. 

Nevertheless, the above approach works when $\cH=L^2(N_1)$ and $\cK=L^2(N_2)$ for tracial von Neumann algebras $N_1$, $N_2$ with $Q$ as a common subalgebra \textit{of the center}. In this case we have well-defined actions of $N_1$ and $N_2^{\op}$ on $L^2(N_1)\ot_Q L^2(N_2)$ by
\begin{equation}\label{eq:relprod}
	x\cdot (\xi\ot_Q \eta) \coloneqq x\xi \ot_Q \eta, \qquad\text{and}\qquad (\xi\ot_Q \eta)\cdot y \coloneqq \xi\ot_Q \eta y
\end{equation}
for $x\in N_1$, $y\in N_2$, $\xi\in L^2(N_1)$, $\eta\in L^2(N_2)$, and the two actions of $Q$ coincide by the construction of the Connes fusion tensor product.

\begin{remark}
For general $Q\subset N$, we can look at the ``doubling'' $L^2(N)\ot_Q L^2(N)$. In this case, it does come from a von Neumann algebra that can be viewed as a canonical ``relatively independent product'' of $N$ with itself over the subalgebra $Q$. This von Neumann algebra is given by the aforementioned basic construction $\langle N,e_Q\rangle\subset B(L^2(N))$ generated by $N$ and the projection $e_Q$ onto the subspace $L^2(Q)$. However, as observed in the explanation preceding Lemma~\ref{lem:HS}, $\langle N,e_Q\rangle$ is in general not a tracial von Neumann algebra.
\end{remark}

\begin{definition}\label{def-relprod}
Suppose $Q$ is a common subalgebra of the centers of $N_1$ and $N_2$. We will call the von Neumann subalgebra of $B(L^2(N_1)\ot_Q L^2(N_2))$ generated by $N_1$ and $N_2$ for the actions defined by \eqref{eq:relprod} the \textit{relatively independent product of $N_1$ and $N_2$ over $Q$} and denote it by $N_1\otb_Q N_2$.
\end{definition}

\begin{remark}
If $Q$ is a common subalgebra of the centers of von Neumann algebras $N_1$, $N_2$, and $N_3$, we observe that we have canonical isomorphisms 
\[
(N_1\otb_Q N_2)\otb_Q N_3 \cong N_1\otb_Q (N_2\otb_Q N_3) \eqqcolon N_1\otb_Q N_2\otb_Q N_3,
\]
and
\[
N_1\otb_Q N_2\cong N_2\otb_Q N_1.
\]
\end{remark}

Assuming the above situation, we will next prove the extension of a classical result from ergodic theory, characterizing the maximal compact extension inside a relatively independent product, which we formulated as Theorem \ref{thm:RelProd} in the introduction. In the first half of the proof, we follow very closely the original proof of Furstenberg \cite[Theorem~7.1]{furstenberg-szemeredi} (see also \cite[Theorem~9.21]{glasner}). However, we provide the proof on a purely algebraic level, whereas the original proof is ``point-based'', and we avoid the use of disintegration.

\begin{proof}[Proof of Theorem \ref{thm:RelProd}]
Note that the maximal compact extensions in the statement of the theorem indeed exist thanks to Proposition \ref{prop:quasireg}, by observing that any subalgebra of the center is quasi-regular. It is easy to see that $P_1\otb_Q P_2\subset P$, and we proceed with showing the reverse inclusion. 

Assume $f\in P\ominus P_1\otb_Q P_2$ is an element such that the $\Gamma$-invariant $Q$-module $V$ generated by $f$ is of finite rank, and let $f_1, \ldots, f_r$ be a $Q$-orthonormal basis for $V$. Then for every $1\leq i\leq r$, $f_i\in P\ominus P_1\otb_Q P_2$, and if we write $\gamma \cdot f_i = \sum_{j=1}^r \lambda_{ij}(\gamma) f_j$, then $(\lambda_{ij}(\gamma))_{i,j=1}^r$ is a unitary matrix over $Q$ for every $\gamma\in\Gamma$. We now define the element
\[
\psi\coloneqq (\id\ot\id\ot E_Q)\left(\sum_{i=1}^r \iota_{13}(f_i)\iota_{23}(f_i^*)\right) \in L^2(N_1)\ot_Q L^2(N_1)\ot_Q L^2(Q) \cong L^2(N_1)\ot_Q L^2(N_1),
\]
where $\iota_{13},\iota_{23}: N_1\otb_Q N_2\rarrow N_1\otb_Q N_1\otb_Q N_2$ denote the canonical embeddings determined by $\iota_{13}(x\ot_Q y) = x\ot 1\ot y$ and $\iota_{23}(x\ot_Q y) = 1\ot x\ot y$. For every $\gamma\in\Gamma$, using that $(\lambda_{ij}(\gamma))_{i,j=1}^r$ is a unitary matrix over $Q$, a straightforward calculation then gives that $\gamma\cdot \psi = \psi$. Therefore, we can view $\psi\in L^2(N_1)\ot_Q L^2(N_1)$ as a $\Gamma$-invariant conditional Hilbert-Schmidt operator. As in the proof of Theorem~\ref{thm:compchar}, we get that the image of $\psi$ is spanned by the $\psi$-invariant and $\Gamma$-invariant $Q$-finitely generated submodules. Take such a submodule $W$ and let $\phi_1, \ldots, \phi_s$ be a $Q$-orthonormal basis for $W$. For $1\leq i\leq r$, $1\leq j\leq s$, we define the elements
\[
g_{ij}\coloneqq (E_Q\ot\id)(f_i (\phi_j^*\ot 1)) \in L^2(Q)\ot_Q L^2(N_2) \cong L^2(N_2).
\]
Writing $\gamma\cdot \phi_i = \sum_{j=1}^s \eta_{ij}(\gamma)\phi_j$, a straightforward computation yields
\[
\gamma\cdot g_{ij} = \sum_{m=1}^r \sum_{n=1}^s \lambda_{im}(\gamma)\eta_{jn}(\gamma)^* g_{mn},
\]
for every $\gamma\in\Gamma$. Hence $g_{ij}\in L^2(P_2)$. By construction, we then have for every $1\leq i\leq r$, $1\leq j,k\leq s$ that
\[
\phi_k\ot_Q g_{ij} \in L^2(P_1)\ot_Q L^2(P_2),
\]
and thus $f_i$ is perpendicular to $\phi_k\ot_Q g_{ij}$. Hence for $1\leq j,k\leq s$, we have
\begin{align*}
0&= (E_Q\ot E_Q)\left(\sum_{i=1}^r f_i (\phi_k^*\ot_Q g_{ij}^*)\right)\\
&= (E_Q\ot E_Q)\left(\sum_{i=1}^r f_i (\phi_k^*\ot_Q 1)(1\ot_Q g_{ij}^*)\right)\\
&= (E_Q\ot E_Q)\left(\sum_{i=1}^r f_i [(\phi_k^*\ot_Q 1)][1\ot (E_Q\ot\id)(f_i^* (\phi_j\ot 1))])\right).
\end{align*}
where in the last equality we used the definition of $g_{ij}$ considered inside $L^2(N_2)$. Interpreting the elements between the large brackets as living in $L^2(N_1)\ot_Q L^2(Q)\ot_Q L^2(N_2)$ ($\cong L^2(N_1) \ot_Q L^2(N_2)$), we can write
\begin{align*}
(E_Q\ot E_Q)&\left(\sum_{i=1}^r f_i [(\phi_k^*\ot_Q 1)][1\ot (E_Q\ot\id)(f_i^* (\phi_j\ot 1))]\right)\\
&= (E_Q\ot \id\ot E_Q)\left(\sum_{i=1}^r \iota_{13}(f_i (\phi_k^*\ot_Q 1))\iota_{23}((E_Q\ot\id)(f_i^* (\phi_j\ot 1)))\right)\\
&= (E_Q\ot E_Q\ot E_Q)\left(\sum_{i=1}^r \iota_{13}(f_i (\phi_k^*\ot_Q 1))\iota_{23}(f_i^* (\phi_j\ot 1)))\right).
\end{align*}
Taking together terms appropriately and observing that $\iota_{13}(\phi_k^*\ot_Q 1) = (\phi_k^*\ot_Q 1\ot_Q 1)$ commutes with $\iota_{23}(f_i^*)$ yields
\begin{align*}
(E_Q\ot E_Q&\ot E_Q)\left(\sum_{i=1}^r \iota_{13}(f_i (\phi_k^*\ot_Q 1))\iota_{23}(f_i^* (\phi_j\ot 1)))\right)\\
&= (E_Q\ot E_Q\ot E_Q) \left(\left[\sum_{i=1}^r (\id\ot\id\ot E_Q)(\iota_{13}(f_i)\iota_{23}(f_i^*))\right] (\phi_k^*\ot_Q\phi_j\ot_Q 1)\right).
\end{align*}
Re-interpreting the elements between brackets to be inside $L^2(N_1)\ot_Q L^2(N_1)$ ($\cong L^2(N_1)\ot_Q L^2(N_1)\ot_Q L^2(Q)$), and using the definition of $\psi$, we get
\begin{align*}
(E_Q\ot E_Q\ot E_Q) &\left(\left[\sum_{i=1}^r (\id\ot\id\ot E_Q)(\iota_{13}(f_i)\iota_{23}(f_i^*))\right] (\phi_k^*\ot_Q\phi_j\ot_Q 1)\right)\\
&= (E_Q\ot E_Q) (\psi(\phi_k^*\ot_Q\phi_j)).
\end{align*}
Finally, applying the definitions of the conditional convolution \eqref{def:conditionalconvolution} and the $Q$-inner product, we get
\begin{align*}
(E_Q\ot E_Q) (\psi(\phi_k^*\ot_Q\phi_j))&= E_Q((\id\ot E_Q)(\psi(1\ot \phi_j)) \phi_k^*)\\
&= \langle \psi\ast_Q \phi_j,\phi_k\rangle_Q.
\end{align*}
Following the above string of equalities, we conclude that for any $1\leq j,k\leq s$ we have $\langle \psi\ast_Q \phi_j,\phi_k\rangle_Q = 0$. Hence $\psi|_W=0$, and since $W$ was arbitrary, it follows that $\psi=0$.

We next prove that this implies that $f_i=0$ for every $1\leq i\leq r$, and hence $f=0$, which will finish the proof. Fix a $Q$-orthonormal basis $\{e_j\}_{j\in J}$ of $L^2(N_2)$ as a right $Q$-module, and for $1\leq i\leq r$ write
\[
f_i = \sum_j f_i^{(j)}\ot_Q e_j.
\]
Then we get 
\[
\psi = \sum_{i=1}^r \sum_{j,k} f_i^{(j)}\ot_Q f_i^{(k)*} E_Q(e_je_k^*) = \sum_{i=1}^r \sum_{j} f_i^{(j)}\ot_Q f_i^{(j)*}.
\]
As a conditional Hilbert-Schmidt operator, this essentially means that $\psi$ is written as a sum of rank 1 operators over $Q$ given by $M_{f_i^{(j)}}M_{f_i^{(j)}}^*$, where $M_{f_i^{(j)}}: L^2(Q)\rarrow L^2(N_1)$ denotes the $Q$-linear operator given by multiplication by $f_i^{(j)}$. Following \cite[Proposition~8.4.2]{AP}, we then get
\begin{align*}
0 &= \hat\tau(\psi)\\
&= \hat\tau(\sum_{i=1}^r \sum_{j} f_i^{(j)}\ot_Q f_i^{(j)*})\\
&= \hat\tau(\sum_{i=1}^r \sum_{j} M_{f_i^{(j)}}M_{f_i^{(j)}}^*)\\
&= \tau(\sum_{i=1}^r \sum_{j} M_{f_i^{(j)}}^*M_{f_i^{(j)}})\\
&= \sum_{i=1}^r \sum_{j} \tau_Q(E_Q(f_i^{(j)^*}f_i^{(j)}))\\
&= \sum_{i=1}^r \sum_{j} \tau_{N_1}(f_i^{(j)^*}f_i^{(j)}).
\end{align*}
We conclude that for all $i,j$, $f_i^{(j)}=0$, and thus every $f_i=0$, which finished the proof of Theorem~\ref{thm:RelProd}.
\end{proof}

As a special case of Theorem \ref{thm:RelProd}, we deduce the well-known result of Furstenberg (\!\!\cite[Theorem~7.1]{furstenberg-szemeredi}, see also \cite[Theorem~9.21]{glasner}) by which the proof is inspired, but where we can remove any countability/separability assumptions.

\begin{theorem}\label{thm:RelProdComm}
	Let $(X_1,\mu_1,\sigma_1)$ and $(X_2,\mu_2, \sigma_2)$ be probability algebra dynamical $\Gamma$-systems with factor maps $(X_1,\mu_1,\sigma_1)\to (Y,\nu)$ and $(X_2,\mu_2,\sigma_2)\to (Y,\nu)$. Denote by $\rho_i:Z_i\rarrow Y$ the maximal compact extension below $X_i$. Consider the canonical factor map $\pi: X_1\times_Y X_2\rarrow Y$, and denote by $\rho:Z\rarrow Y$ the maximal compact extension below $X_1\times_Y X_2$. Then
	\[
	Z\cong Z_1\times_Y Z_2.
	\]
\end{theorem}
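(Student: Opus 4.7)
The plan is to obtain Theorem~\ref{thm:RelProdComm} as a direct corollary of Theorem~\ref{thm:RelProd} by reformulating the measure-theoretic setup in von Neumann algebraic language. Concretely, set $N_i \coloneqq L^\infty(X_i,\mu_i)$ with its integration trace, and $Q \coloneqq L^\infty(Y,\nu)$, all regarded with the naturally induced actions of $\Gamma$. Each factor map $(X_i,\mu_i,\sigma_i)\to(Y,\nu)$ yields a $\Gamma$-equivariant unital trace-preserving inclusion $Q\subset N_i$, and since $N_i$ is abelian, the subalgebra $Q$ is automatically contained in the center of $N_i$, and the actions of $\Gamma$ on $Q$ coming from the two inclusions agree with the one induced by $\Gamma\act(Y,\nu)$.

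The next step is to identify the relatively independent product at both levels. Using the identification \eqref{eq:RelProdL2} (and its bimodule-level extension discussed in Section~\ref{ssec:relprod}), together with the fact that for commutative algebras $N_i^{\op}=N_i$, one sees that the von Neumann algebra $N_1\otb_Q N_2^{\op}$ from Definition~\ref{def-relprod} is canonically isomorphic (as a tracial $W^*$-dynamical $\Gamma$-system equipped with the inclusion $Q\hookrightarrow N_1\otb_Q N_2$) to $L^\infty(X_1\times_Y X_2, \mu_1\times_Y\mu_2)$. Similarly, writing $P_i\coloneqq L^\infty(Z_i)\subset N_i$, the maximal compact extension of $Y$ inside $X_i$ in the probability-algebra sense corresponds to $P_i$ in the von Neumann algebraic sense; this is the content of Theorem~\ref{thm:compchar} (specifically the equivalence $(1)\!\iff\!(3)$, which matches the classical module-theoretic description of compact extensions over $Q$), combined with the fact that commutative inclusions are automatically quasi-regular so that the maximal compact extension exists as a genuine von Neumann subalgebra by Proposition~\ref{prop:quasireg}.

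With these dictionaries in place, Theorem~\ref{thm:RelProd} directly gives that the maximal compact extension of $Q$ inside $N_1\otb_Q N_2^{\op}$ equals $P_1\otb_Q P_2^{\op}$. Translating back through the same dictionary, the right-hand side is $L^\infty(Z_1\times_Y Z_2)$, while the left-hand side is $L^\infty(Z)$. Hence $Z\cong Z_1\times_Y Z_2$ as probability algebra $\Gamma$-systems over $Y$, which is the desired conclusion.

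I do not expect any serious obstacle: the only mildly delicate point is making the probability-algebra/von Neumann algebra dictionary rigorous without countability or separability hypotheses, but this is exactly the setting in which Theorem~\ref{thm:compchar} and Theorem~\ref{thm:RelProd} were established. No use of disintegrations or standard Borel structure is required, which is why the classical Furstenberg statement can be recovered here in full generality.
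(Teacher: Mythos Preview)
Your proposal is correct and matches the paper's approach: the paper presents Theorem~\ref{thm:RelProdComm} as a direct special case of Theorem~\ref{thm:RelProd} (with no further proof given), and your write-up simply spells out the commutative/von Neumann dictionary needed to make that specialization precise. The only minor remark is that quasi-regularity is not needed here since $Q$ is central, which already suffices for Proposition~\ref{prop:quasireg} to apply.
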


\section{Weakly mixing extensions and dichotomies}\label{sec-dichotomy}

Weakly mixing extensions are the complementary notion to compact extensions. These have been well-studied in the von Neumann algebraic setting and have for instance been used extensively in Popa's deformation/rigidity theory, e.g., see \cite{popa-betti,popa-2,popa-1,popa,popa-vaes}. The following proposition due to Popa (see \cite[Lemma 2.1]{popa}) lists some equivalent characterizations of weakly mixing extensions for tracial $W^*$-dynamical systems. Note that item (3) in Proposition \ref{prop:PopaWM} complements property (2) in Theorem \ref{thm:compchar}, which we will use below to deduce some dichotomies analogous to the commutative case. 

\begin{proposition}\label{prop:PopaWM}
Let $\Gamma\act (N,\tau)$ be a tracial $W^*$-dynamical system, and let $Q\subset N$ be a $\Gamma$-invariant von Neumann subalgebra. Then the following properties are equivalent. 
    \begin{itemize}
		\item[(1)] For any finite set $F\subset N\ominus Q$ and every $\eps>0$ there exists $\gamma\in \Gamma$ such that $$\|E_Q(f\sigma_\gamma(g))\|_2\leq \eps$$ for all $f,g\in F$. 
		\item[(1')] For any finite set $F\subset L^2(N)\ominus L^2(Q)$ with $E_Q(ff^*)$ bounded for all $f\in F$ and every $\eps>0$ there exists $\gamma\in \Gamma$ such that $$\|E_Q(f\sigma_\gamma(g))\|_2\leq \eps$$ for all $f,g\in F$. 
		\item[(2)] There exists a net $(\gamma_i)_{i\in I}$ in $\Gamma$, with the property that for any finite set $\Lambda\subset \Gamma$ there exists $i_\Lambda$ such that $\gamma_i\not\in \Lambda$ for all $i\geq i_\Lambda$, and an orthonormal basis $\{1\}\cup \{f_j\}_{j\in J}$ of $L^2(N)$ relative to $Q$ such that $$\lim_{i} \norm{E_Q(f^*_j \sigma_{\gamma_i}(f_{j'}))}_2\to 0$$
		for all $j,j'\in J$.  
		\item[(3)] Any $\Gamma$-invariant $K\in L^2(N)\ot_Q L^2(N)$ lies in $L^2(Q)$. 
	\end{itemize}
\end{proposition}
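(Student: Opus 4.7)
The plan is to prove the equivalences via $(1)\Leftrightarrow (1')$, $(1)\Leftrightarrow (2)$, and $(1)\Leftrightarrow (3)$. The equivalence $(1)\Leftrightarrow (1')$ is a straightforward approximation: $(1')\Rightarrow (1)$ is immediate since $N\ominus Q \subset L^2(N)\ominus L^2(Q)$ and $E_Q(ff^*)\leq \|f\|_\infty^2 \cdot 1$ for $f\in N$; and $(1)\Rightarrow (1')$ follows by approximating each $f\in L^2(N)\ominus L^2(Q)$ with bounded $E_Q(ff^*)$ by elements of $N\ominus Q$ in $\|\cdot\|_2$-norm, with the uniform boundedness of $E_Q(ff^*)$ providing uniform control of the error in $\|E_Q(\,\cdot\,\sigma_\gamma(\,\cdot\,))\|_2$.

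For $(1)\Leftrightarrow (2)$, I fix an orthonormal basis $\{1\}\cup \{f_j\}_{j\in J}$ of $L^2(N)$ over $Q$ with each $f_j\in N\ominus Q$ (cf.\ \cite[Proposition~8.4.11]{AP}, possibly after a density approximation), so that $\langle f_j, f_j\rangle_Q$ is a projection. For $(1)\Rightarrow (2)$, I index the net by triples $(F,\Lambda,n)$ with $F\subset J$ and $\Lambda\subset\Gamma$ finite and $n\in \mathbb{N}$, seeking $\gamma_{F,\Lambda,n}\in \Gamma\setminus\Lambda$ with $\max_{j,j'\in F}\|\langle f_j, \sigma_\gamma(f_{j'})\rangle_Q\|_2 \leq 1/n$. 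Existence of some $\gamma$ satisfying the bound follows from $(1)$ applied to $\{f_j, f_j^* : j\in F\}$; the principal technical step is ensuring $\gamma\notin \Lambda$. For this I argue by contradiction: if all good $\gamma$'s lay in the finite set $\Lambda$, iterating $(1)$ along $n\to\infty$ and pigeonholing in $\Lambda$ would produce some $\lambda\in \Lambda$ with $\langle f_j, \sigma_\lambda(f_{j'})\rangle_Q = 0$ for every $j, j'\in J$, which (since $\sigma_\lambda$ preserves $L^2(Q)^\perp$) forces $\sigma_\lambda(f_{j'}) = 0$, contradicting $f_{j'}\neq 0$. Conversely, $(2)\Rightarrow (1)$ follows by expanding arbitrary $f, g\in N\ominus Q$ in the basis as $f^* = \sum_j f_j a_j$ and $g = \sum_k f_k b_k$ (finitely, up to controlled error), writing $\langle f^*, \sigma_\gamma(g)\rangle_Q = \sum_{j,k} a_j^* \langle f_j, \sigma_\gamma(f_k)\rangle_Q \sigma_\gamma(b_k)$, and using the triangle inequality with uniform bounds on $\|a_j\|_\infty, \|b_k\|_\infty$ together with the asymptotic vanishing from (2).

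For $(1)\Leftrightarrow (3)$, I use a Mazur-type argument on $L^2(N)\otimes_Q L^2(N)$. For $(3)\Rightarrow (1)$ (contrapositive), suppose $(1)$ fails for some $F = F^*\subset N\ominus Q$ and $\epsilon > 0$; let $T := \sum_{f\in F}f\otimes_Q f^*$, and let $K$ be the minimum-norm element of $\overline{\mathrm{co}}\{(\sigma_\gamma\otimes\sigma_\gamma)(T) : \gamma\in\Gamma\}$, equivalently the orthogonal projection of $T$ onto the $\Gamma$-invariant subspace of $L^2(N)\otimes_Q L^2(N)$. A direct computation gives $\langle T, (\sigma_\gamma\otimes\sigma_\gamma)T\rangle = \sum_{f,g\in F}\|E_Q(f^*\sigma_\gamma(g))\|_2^2 \geq \epsilon^2$ for every $\gamma$ (using $F = F^*$), whence $\langle T, K\rangle \geq \epsilon^2 > 0$ by linearity and continuity. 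On the other hand, $E_Q(f) = 0$ for $f\in F$ forces $T$ orthogonal to the diagonal embedding of $L^2(Q)$ into $L^2(N)\otimes_Q L^2(N)$ via $q\mapsto q\otimes_Q 1$; if $(3)$ held, $K\in L^2(Q)$ would give $\langle T, K\rangle = 0$, a contradiction. For $(1)\Rightarrow (3)$, given a $\Gamma$-invariant $K\in L^2(N)\otimes_Q L^2(N)$, decompose $K = K_0 + K_1$ with $K_0\in L^2(Q)$ diagonally and $K_1\perp L^2(Q)$, both $\Gamma$-invariant, expand $K_1 = \sum_{j\geq 0}f_j\otimes_Q \xi_j$ with $\xi_j\in p_j L^2(N)$ and $\sum_j \|\xi_j\|^2 < \infty$ using the basis, and use $\Gamma$-invariance together with the asymptotic vanishing from (2) and a truncation exploiting the summability to force $K_1 = 0$.

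The principal obstacle is the wandering property of the net in $(1)\Rightarrow (2)$: since $(1)$ only asserts the existence of a single good $\gamma$, producing a net that both satisfies the asymptotic orthogonality \emph{and} escapes every finite subset of $\Gamma$ is not an immediate consequence of $(1)$ and requires the iterative pigeonhole argument above.
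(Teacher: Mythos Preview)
The paper does not give a detailed proof of this proposition: it simply cites \cite[Lemma~2.10]{popa} and remarks that the argument there (stated for countable groups and separable algebras) carries over to the general setting once one replaces sequences by nets and uses the existence of $Q$-orthonormal bases in the non-separable case. Your outline is consistent with Popa's strategy and supplies details the paper omits, so in that sense your approach agrees with (the reference behind) the paper's.

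There is, however, a gap in your argument for the wandering property in $(1)\Rightarrow(2)$. As written, for a fixed triple $(F,\Lambda,n)$ you ``iterate $(1)$ along $n\to\infty$'' and pigeonhole in the finite set $\Lambda$ to obtain $\lambda\in\Lambda$ with $\langle f_j,\sigma_\lambda(f_{j'})\rangle_Q=0$. But this iteration, with $F$ held fixed, yields vanishing only for $j,j'\in F$, not for all $j,j'\in J$ as you claim; and from vanishing on the finite set $F$ you cannot conclude $\sigma_\lambda(f_{j'})=0$, since the $f_j$ with $j\in F$ do not span $L^2(N)\ominus L^2(Q)$. One clean fix is to enlarge $F$ before invoking $(1)$: given $(F,\Lambda,n)$, apply $(1)$ to $\tilde F=F\cup F^*\cup\bigcup_{\lambda\in\Lambda}\sigma_\lambda^{-1}(F)$ with $\epsilon=\min\bigl(1/n,\tfrac12\min_{j\in F}\|p_j\|_2\bigr)$; if the resulting $\gamma$ lay in $\Lambda$, then with $f=f_j^*\in\tilde F$ and $g=\sigma_\gamma^{-1}(f_j)\in\tilde F$ one would get $\|p_j\|_2=\|E_Q(f_j^*f_j)\|_2\leq\epsilon<\|p_j\|_2$, a contradiction. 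Alternatively, one can run the pigeonhole over the full directed set of pairs $(F,n)$ rather than over $n$ alone: a net with values in a finite set is cofinally constant, and cofinality over $(F,n)$ then genuinely yields $\langle f_j,\sigma_\lambda(f_{j'})\rangle_Q=0$ for all $j,j'\in J$, whence $\sigma_\lambda(f_{j'})=0$ as you want.
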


\begin{proof}
We note that \cite[Lemma 2.10]{popa} is stated and proved for countable groups acting on separable tracial von Neumann algebras. The same proof works for arbitrary discrete groups acting on arbitrary tracial von Neumann algebras by replacing sequences with nets in (2) and observing that any right $Q$-module still has a (not necessarily countable) orthonormal basis (cf. \cite{AP}).
\end{proof}

\begin{definition}
	If any (and thus all) of the properties in Proposition \ref{prop:PopaWM} is satisfied, then we say that $N$ is \emph{weakly mixing relative to} $Q$, or that $N$ is a \emph{weakly mixing extension} of $Q$. 
	
Given any tracial $W^*$-dynamical inclusion $\Gamma\act (Q\subset N)$, we denote
\[
\cWM^2_{Q,N}\coloneqq L^2(N)\ominus \cAP^2_{Q,N},
\]
and
\[
\cWM_{Q,N}\coloneqq \cWM^2_{Q,N}\cap N.
\]
Obviously, $\cWM^2_{Q,N}$ is a $\Gamma$-invariant Hilbert subspace of $L^2(N)$.
\end{definition}

By Proposition \ref{prop:PopaWM} and Theorem \ref{thm:compchar}, we can deduce some compact/weak mixing dichotomies. The Hilbert space $L^2(N)$ splits as an orthogonal sum of its relatively compact and weakly mixing parts without any further assumption on the inclusion, similarly to the commutative situation. However on the von Neumann algebra level we need to assume that the inclusion $Q\subset N$ is quasi-regular.

\begin{proposition}[$L^2$-dichotomy]
Let $\Gamma\act (N,\tau)$ be a tracial $W^*$-dynamical system, and let $Q\subset N$ be a $\Gamma$-invariant von Neumann subalgebra. Then $\cWM^2_{Q,N}$ is weakly mixing relative to $Q$, in the sense that property (1') from Proposition \ref{prop:PopaWM} holds with $L^2(N)\ominus L^2(Q)$ replaced by $\cWM^2_{Q,N}$. In particular, we can decompose $L^2(N)$ as
	\[
	L^2(N) = \cAP^2_{Q,N} \oplus \cWM^2_{Q,N}
	\]
	into a relatively compact and a relatively weakly mixing part.
\end{proposition}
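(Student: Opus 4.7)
The orthogonal decomposition $L^2(N) = \cAP^2_{Q,N} \oplus \cWM^2_{Q,N}$ is immediate from the definition of $\cWM^2_{Q,N}$ together with Lemma~\ref{lem:closed}(i), which shows $\cAP^2_{Q,N}$ to be a closed, $\Gamma$-invariant subspace of $L^2(N)$.

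To establish that $\cWM^2_{Q,N}$ is weakly mixing relative to $Q$, I plan to adapt the minimal-norm convex hull argument from the proof of $(1)\Rightarrow(2)$ of Theorem~\ref{thm:compchar}. Fix a finite set $F \subset \cWM^2_{Q,N}$ with $E_Q(ff^*)$ bounded for every $f \in F$ (which ensures that $f^*$ is left-$Q$-bounded as an element of the right $Q$-module $L^2(N)$), together with $\epsilon > 0$. Form the positive element
\[
X \;:=\; \sum_{f \in F} f \otimes_Q f^* \;\in\; L^2(N) \otimes_Q L^2(N),
\]
and let $K$ be the unique element of minimum norm in the closed convex hull of the $\Gamma$-orbit $\{\sigma_\gamma(X) : \gamma \in \Gamma\}$. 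Then $K$ is automatically $\Gamma$-invariant.

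The key step is to show $K = 0$. Viewing $K$ as a conditional Hilbert--Schmidt operator on $L^2(N)$ via Example~\ref{ex:HS}, the spectral argument from $(2)\Rightarrow(3)$ of Theorem~\ref{thm:compchar}, applied to the self-adjoint and anti-self-adjoint parts of $K$ (both $\Gamma$-invariant and conditional Hilbert--Schmidt), shows that the range of $K$ is contained in $\cAP^2_{Q,N}$. Combining the pairing identity
\[
\langle f,\, K \ast_Q g \rangle_{L^2(N)} \;=\; \langle K,\, f \otimes_Q g^* \rangle_{L^2(N) \otimes_Q L^2(N)}
\]
(a direct computation as in the proof of $(1)\Rightarrow(2)$ of Theorem~\ref{thm:compchar}) with the orthogonality $F \perp \cAP^2_{Q,N}$ yields $\langle K, X \rangle = 0$. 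By $\Gamma$-invariance of $K$, we get $\langle K, \sigma_\gamma X \rangle = 0$ for every $\gamma$, and convexity plus continuity then force $\langle K, K \rangle = 0$, so $K = 0$.

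Since $0$ lies in the closed convex hull of $\{\sigma_\gamma X\}_\gamma$ and $X$ corresponds to a positive conditional Hilbert--Schmidt operator, each inner product $\langle \sigma_\gamma X, X \rangle = \sum_{f, g \in F} \|E_Q(\sigma_\gamma(f^*) g)\|_2^2$ is non-negative. Approximating $0$ by a convex combination of $\Gamma$-translates of $X$ and then pairing with $X$ therefore gives, via pigeonhole, some $\gamma \in \Gamma$ with $\|E_Q(\sigma_\gamma(f^*) g)\|_2 < \epsilon$ for all $f, g \in F$. The main obstacle is translating this bound---which naturally involves $\sigma_\gamma(f^*)$---into the form $\|E_Q(f \sigma_\gamma g)\|_2 < \epsilon$ of property~(1'); this requires verifying that $\cWM^2_{Q,N}$ is closed under involution (so that $F$ may be enlarged to $F \cup F^*$ without leaving $\cWM^2_{Q,N}$), which in turn follows from the equivalence between the right- and left-module characterizations in Theorem~\ref{thm:compchar}(3).
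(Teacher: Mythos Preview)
Your approach is essentially the paper's: the paper's proof is a one-line sketch instructing the reader to mimic Popa's $(3)\Rightarrow(1')$ argument and contradict Theorem~\ref{thm:compchar}(2), and you have written out precisely that---the minimal-norm element $K$ in the convex hull of the orbit of $X=\sum_{f\in F} f\otimes_Q f^*$, followed by the observation that any $\Gamma$-invariant conditional Hilbert--Schmidt operator has range in $\cAP^2_{Q,N}$, forcing $K=0$. One small technical point: the hypothesis $E_Q(ff^*)$ bounded makes $f^*$ (not $f$) left-$Q$-bounded, so to stay within the paper's convention for the Connes fusion (bounded vector in the first slot) you should form $X=\sum_{f\in F} f^*\otimes_Q f$; the resulting pairing then yields $\|E_Q(\sigma_\gamma(f)g^*)\|_2=\|E_Q(g\sigma_\gamma(f^*))\|_2$ directly, and your closure-under-involution argument via the left/right symmetry in Theorem~\ref{thm:compchar}(3) handles the remaining adjoint, though note that enlarging $F$ to $F\cup F^*$ also requires $E_Q(f^*f)$ bounded, which is not part of the stated hypothesis---this asymmetry is glossed over in the paper as well.
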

\begin{proof}
One can mimic the proof of $(3)\Rightarrow (1')$ from \cite[Lemma 2.10]{popa} with $L^2(N)\ominus L^2(Q)$ replaced by $\cWM^2_{Q,N}$, and reach a contradiction with property (2) in Theorem \ref{thm:compchar} if $\cWM^2_{Q,N}$ wouldn't satisfy property (1'). We leave the details to the interested reader.
\end{proof}

\begin{corollary}
Let $\Gamma\act (N,\tau)$ be a tracial $W^*$-dynamical system, and let $Q\subset N$ be a $\Gamma$-invariant von Neumann subalgebra. If the inclusion $Q\subset N$ is quasi-regular, then the space of relatively weakly mixing elements $\cWM_Q\subset N$ is given by $\cWM_Q = (\cAP_Q)^\perp = \{x\in N\mid E_{\cAP_Q}(x)=0\}$.
\end{corollary}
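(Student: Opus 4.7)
The inclusion $\cWM_Q \subseteq (\cAP_Q)^\perp$ is immediate from $\cAP_Q \subseteq \cAP^2_{Q,N}$: any $x \in \cWM_Q$ is $L^2$-orthogonal to $\cAP^2_{Q,N}$ and hence to $\cAP_Q$, which is to say $E_{\cAP_Q}(x)=0$. For the reverse inclusion, the crucial step I would establish is that, under quasi-regularity,
\[
\cAP^2_{Q,N} \;=\; L^2(\cAP_Q)
\]
as closed subspaces of $L^2(N)$. Granted this identification, any $x \in N$ with $E_{\cAP_Q}(x)=0$ satisfies $x \perp L^2(\cAP_Q) = \cAP^2_{Q,N}$, so that $x \in \cWM^2_{Q,N} \cap N = \cWM_Q$.

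To prove $\cAP^2_{Q,N} = L^2(\cAP_Q)$, the inclusion $L^2(\cAP_Q) \subseteq \cAP^2_{Q,N}$ follows by taking the $\|\cdot\|_2$-closure of $\cAP_Q \subseteq \cAP^2_{Q,N}$. For the converse, I plan to first strengthen Theorem~\ref{thm:compchar}(2) to the assertion that $\cAP^2_{Q,N}$ itself coincides with the $\|\cdot\|_2$-closure of $\mathrm{span}\{K\ast_Q\eta : K \in L^2(N) \otimes_Q L^2(N)\ \Gamma\text{-invariant},\ \eta \in L^2(N)\}$. This will be done by adapting the proof of $(1)\Rightarrow(2)$ in Theorem~\ref{thm:compchar} from $f \in N$ to a left-$Q$-bounded element $\xi \in \cAP^2_{Q,N}$ assumed orthogonal to the above span. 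The convex-hull construction applied to $\xi\otimes_Q\xi^*$ and the $\Gamma$-invariance argument forcing $K=0$ go through verbatim; in the final step, the left-$Q$-boundedness of $\xi$ guarantees that $E_Q(\sigma_\gamma(\xi)^*\sigma_\gamma(\xi))$ lies in $Q$ with $\|\cdot\|_1$-norm equal to $\|\xi\|_2^2$, so that by Cauchy--Schwarz the smallness of its $\|\cdot\|_2$-norm still forces $\xi = 0$. Density of left-$Q$-bounded vectors inside the closed right $Q$-submodule $\cAP^2_{Q,N}$ then upgrades this to the whole subspace.

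Combining this characterization with Remark~\ref{rk:MaxCpt}, which under quasi-regularity describes $\cAP_Q$ as the closure of $\mathrm{span}\{K\ast_Q x : x \in N\}$, and using $\|\cdot\|_2$-density of $N$ in $L^2(N)$ together with $\|\cdot\|_2$-continuity of $\eta \mapsto K\ast_Q\eta$, one sees that the $\|\cdot\|_2$-closures of the spans with $\eta \in N$ and with $\eta \in L^2(N)$ agree; comparing, this common closure is simultaneously $L^2(\cAP_Q)$ and $\cAP^2_{Q,N}$, yielding the desired identification.

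The main obstacle I anticipate is the adaptation of the Theorem~\ref{thm:compchar} argument from $N$ to $L^2(N)$: one must carefully check that $\xi\otimes_Q\xi^*$ is a meaningful element of $L^2(N)\otimes_Q L^2(N)$ for left-$Q$-bounded $\xi$, and that every estimate in the original proof of $(1)\Rightarrow(2)$ remains valid without the hypothesis $\xi \in N$, in particular the boundedness of $E_Q(\sigma_\gamma(\xi)^*\sigma_\gamma(\xi))$ and the passage from $\|\cdot\|_2$ to $\|\cdot\|_1$ at the end.
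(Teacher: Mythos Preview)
The paper gives no proof of this corollary at all; it is stated as an immediate consequence of the preceding $L^2$-dichotomy together with Proposition~\ref{prop:quasireg}. You have correctly isolated the one nontrivial step that is being taken for granted, namely the identification $\cAP^2_{Q,N}=L^2(\cAP_Q)$, and your plan to establish $\cAP^2_{Q,N}=\overline{\mathrm{span}\{K\ast_Q\eta\}}^{\|\cdot\|_2}$ by running the $(1)\Rightarrow(2)$ argument of Theorem~\ref{thm:compchar} for left $Q$-bounded almost periodic vectors is sound and goes beyond what the paper spells out.

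There is, however, a circularity in your last paragraph. Your step 4a gives $\cAP^2_{Q,N}=\overline{\mathrm{span}\{K\ast_Q\eta:\eta\in L^2(N)\}}$, and density of $N$ in $L^2(N)$ shows this closure equals $\overline{\mathrm{span}\{K\ast_Q x:x\in N\}}$. But Remark~\ref{rk:MaxCpt}, read literally, asserts only that $\cAP_Q$ (a subset of $N$) coincides with this closure, which can at best mean $\cAP_Q = N\cap\overline{\mathrm{span}\{K\ast_Q x\}}$. Taking the $\|\cdot\|_2$-closure of both sides yields $L^2(\cAP_Q)\subset\overline{\mathrm{span}\{K\ast_Q x\}}=\cAP^2_{Q,N}$, which is the inclusion you already had; it does \emph{not} give the reverse inclusion. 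In other words, you have not yet shown that $K\ast_Q\eta\in L^2(\cAP_Q)$ for $\Gamma$-invariant $K$, or equivalently that every $\Gamma$-invariant finitely generated right $Q$-submodule of $L^2(N)$ sits inside $L^2(\cAP_Q)$. This is precisely where quasi-regularity must be used substantively (beyond merely invoking that $\cAP_Q$ is an algebra): an argument in the spirit of the proof of Proposition~\ref{prop:quasireg}, approximating a Pimsner--Popa basis of such a submodule via elements of $\cQN_N(Q)$ and the projections $z_nP_Vz_n$, should close this gap. Without it, your argument so far only reproves $L^2(\cAP_Q)\subset\cAP^2_{Q,N}$.
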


Next we state a version of the notion of \textit{disjointedness} in the non-commutative setting. By analogy with the commutative setting, we will then use it together with our Theorem \ref{thm:compchar} to characterize weakly mixing extensions.

\begin{definition}
	Suppose $\Gamma\act (M,\tau_M)$ and $\Gamma\act (N,\tau_N)$ are tracial $W^*$-dynamical systems and $Q\subset M,N$ is a common $\Gamma$-invariant subalgebra. We say that $M$ and $N$ are \textit{disjoint over $Q$} if $\tau_M\ot_Q\tau_N$ is the only joining of $M$ and $N$ over $Q$.
\end{definition}

In Proposition~\ref{thm-disjoint}, we will characterize weakly mixing extensions as being exactly those that are disjoint from all compact extensions (under the assumption of quasi-regularity). The special case $Q=\C 1$ for general (not necessarily tracial) $W^*$-dynamical systems was established for abelian groups by Duvenhage in \cite{duvenhage-joining2} and for general locally compact groups by Bannon, Cameron and Mukherjee in \cite[Theorem 6.15]{joining-1}. Our approach offers an efficient proof for this fact in the tracial setting. For example, in \cite{joining-1} it had to be checked by hand that relatively almost periodic elements form a von Neumann subalgebra, which is immediate from the characterization in Lemma \ref{lem:cap} when $Q=\C 1$.

\begin{proposition}\label{thm-disjoint}
Let $\Gamma\act (N,\tau)$ be a tracial $W^*$-dynamical system, and let $Q\subset N$ be a quasi-regular and $\Gamma$-invariant von Neumann subalgebra.  Then $N$ is a weakly mixing extension of $Q$ if and only if it is disjoint from every system which is a compact extension of $Q$.
\end{proposition}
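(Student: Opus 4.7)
The plan is to prove the two implications separately, using the correspondence between joinings and normal ucp maps recalled in Subsection~\ref{sec:bimod}.

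For the forward direction, suppose $N$ is weakly mixing over $Q$, let $M$ be a compact extension of $Q$, and let $\varphi$ be a joining of $M$ and $N$ over $Q$. The GNS construction applied to $\varphi$ yields a pointed $M$-$N$-bimodule $(\cH_\varphi, \xi_\varphi)$ with $\xi_\varphi$ cyclic and left $N$-bounded, and the correspondence then produces a normal ucp map $\Phi \colon M \to N$ satisfying $\tau_N(\Phi(m) n) = \varphi(m \otimes n^{\op})$ for all $m \in M$ and $n \in N$. Since $\varphi|_{Q \algot Q^{\op}} = \Delta_Q$, one verifies $q \xi_\varphi = \xi_\varphi q$ for every $q \in Q$, and deduces that $\Phi$ is $Q$-bimodular, $\tau$-preserving, $\Gamma$-equivariant, and restricts to the identity on $Q$. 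Extracting these structural properties from the joining axioms is the main technical step of the proof. The key consequence is that such a $\Phi$ preserves relative almost periodicity: applying $\Phi$ to the approximations of Lemma~\ref{lem:cap} and exploiting right $Q$-modularity, $\Gamma$-equivariance, and $2$-norm contractivity of $\Phi$ yields $\Phi(\cAP_{Q,M}) \subset \cAP_{Q,N}$. Since $M = \cAP_{Q,M}$ while weak mixing forces $\cAP_{Q,N} = Q$ (via Proposition~\ref{prop:PopaWM}(3) combined with the characterization of Theorem~\ref{thm:compchar}(2)), we conclude $\Phi(M) \subset Q$, and $\Phi$ is thereby identified with the conditional expectation $E_Q \colon M \to Q$. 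Substituting back gives $\varphi = \tau_M \otimes_Q \tau_N$.

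For the reverse direction, assume $N$ is not weakly mixing over $Q$. The failure of Proposition~\ref{prop:PopaWM}(3) produces a $\Gamma$-invariant $K \in L^2(N) \otimes_Q L^2(N)$ outside $L^2(Q)$, and the spectral argument in the proof of Theorem~\ref{thm:compchar} then yields non-trivial $\Gamma$-invariant finitely generated $Q$-submodules of $L^2(N)$ not contained in $L^2(Q)$, and hence almost periodic elements of $N$ outside $Q$. By quasi-regularity and Proposition~\ref{prop:quasireg}, $M \coloneqq \cAP_{Q,N}$ is a von Neumann subalgebra strictly containing $Q$, and by construction a compact extension of $Q$. It now suffices to exhibit two distinct joinings of $M$ and $N$ over $Q$. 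The relatively independent joining $\tau_M \otimes_Q \tau_N$ is one; a second ``diagonal'' joining is defined by $\varphi_{\mathrm{diag}}(m \otimes n^{\op}) \coloneqq \tau_N(mn)$, with positivity following from the identity $\sum_{i,j} \tau_N(m_i^* m_j n_j n_i^*) = \|\sum_i m_i n_i\|_2^2$ and the joining axioms verified directly from the inclusion $M \hookrightarrow N$ being $\tau$-preserving, $Q$-bimodular, and $\Gamma$-equivariant. Taking any $m \in M \setminus Q$ and $n = m^*$, Pythagoras gives $\varphi_{\mathrm{diag}}(m \otimes (m^*)^{\op}) = \|m\|_2^2 > \|E_Q(m)\|_2^2 = (\tau_M \otimes_Q \tau_N)(m \otimes (m^*)^{\op})$, distinguishing the two joinings and completing the argument.
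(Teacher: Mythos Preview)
Your proposal is correct and follows essentially the same approach as the paper. Both directions match: for the forward implication, the paper also passes through the ucp map $\Phi:M\to N$ associated to the joining, notes that $\Phi|_Q=\id_Q$ and $\Phi$ is $\Gamma$-equivariant, and shows that $\Phi$ carries relatively almost periodic elements to relatively almost periodic elements (the paper phrases this as a contradiction with $T_\Phi(\xi)\notin L^2(Q)$, you phrase it as $\Phi(M)\subset Q$ directly, but the content is identical); for the reverse implication, the paper's joining $\varphi(x\ot y)=\Delta_P(x\ot E_P(y))=\tau_N(xE_P(y))=\tau_N(xy)$ on $P\algot N^{\op}$ with $P=\cAP_{Q,N}$ is exactly your diagonal joining $\varphi_{\mathrm{diag}}$, and your verification that it differs from the relatively independent joining is the explicit version of what the paper leaves implicit.
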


We split the proof into two lemmas. The first one is essentially an adaptation of the argument in \cite[Theorem 6.8]{joining-1}. We provide the proof for the sake of completeness, as it considerably simplifies thanks to being in the tracial setting and the characterization of almost periodic elements in Lemma \ref{lem:cap}. Note that the condition that the inclusion $Q\subset N$ is quasi-regular is not needed for this direction.

\begin{lemma}\label{lem:disjoint}
	Suppose $\Gamma\act (Q\subset N)$ is a relatively weakly mixing tracial $W^*$-dynamical inclusion. Then it is disjoint from every system which is a compact extension of $Q$.
\end{lemma}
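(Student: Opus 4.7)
The plan is to encode any joining $\varphi$ of $N$ and $M$ over $Q$ as a bounded contractive operator $T \colon L^2(M) \to L^2(N)$ that is $\Gamma$-equivariant and $Q$-bimodular, and then to force $T = E_Q$ using the contrast between weak mixing of $N$ over $Q$ and compactness of $M$ over $Q$. Concretely, the formula
\[
\langle T(\hat y), \hat x\rangle_{L^2(N)} \;:=\; \varphi(x^* \otimes y^{\op}), \qquad x \in N,\ y \in M,
\]
defines a contraction $T \colon L^2(M) \to L^2(N)$ by Cauchy--Schwarz for the state $\varphi$. Its $\Gamma$-equivariance follows from the $\Gamma$-invariance of $\varphi$; its $Q$-bimodularity follows from the diagonal condition $\varphi|_{Q \odot Q^{\op}} = \Delta_Q$, which via a short GNS computation (considering $\|\pi(q\otimes 1)\xi_\varphi - \pi(1\otimes q^{\op})\xi_\varphi\|^2$ and extending by commutation and cyclicity) forces the left $N$-action and the right $M$-action on $\cH_\varphi$ to coincide when restricted to $Q$. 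Working directly on $L^2$ this way sidesteps the machinery of completely positive maps, which is in line with the simplification the authors mention in the tracial setting.

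Given $T$, the main step transports relative compactness from $M$ into a statement about $T(\hat y)$ inside $L^2(N)$. For $y \in M$ and $\eps > 0$, Lemma \ref{lem:cap} provides $\eta_1, \dots, \eta_k \in M$ and a uniformly bounded family $(\kappa_i(\gamma))_{i,\gamma} \subset Q$ approximating the orbit of $y$. Applying $T$ and using contractivity, $\Gamma$-equivariance, and right $Q$-linearity, the approximation is transferred into
\[
\Big\|\sigma_\gamma(T(\hat y)) - \sum_{i=1}^k T(\hat\eta_i)\,\kappa_i(\gamma)\Big\|_2 \;\leq\; \eps,
\]
with the same $Q$-coefficients $\kappa_i(\gamma)$ and with $T(\hat\eta_i) \in L^2(N)$. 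Lemma \ref{lem:cap} in the reverse direction (which allows $L^2$-valued approximants) then gives $T(\hat y) \in \cAP^2_{Q,N}$. Weak mixing of $N$ over $Q$ forces $\cAP^2_{Q,N} = L^2(Q)$, so $T(\hat y) \in L^2(Q)$.

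To conclude, one identifies $T(\hat y)$ inside $L^2(Q)$. Using the agreement of the two $Q$-actions on $\cH_\varphi$, a direct computation yields
\[
\langle T(\hat y),\hat q\rangle_{L^2(N)} \;=\; \varphi(q^* \otimes y^{\op}) \;=\; \varphi(1 \otimes (yq^*)^{\op}) \;=\; \tau_M(yq^*) \;=\; \tau_Q(q^* E_Q(y))
\]
for every $q \in Q$, so that $T(\hat y)$ and $\widehat{E_Q(y)}$ pair identically with every $\hat q$; faithfulness of $\tau_Q$ on $L^2(Q)$ then forces $T(\hat y) = \widehat{E_Q(y)}$. Hence $\varphi(x \otimes y^{\op}) = \tau_N(x E_Q(y)) = (\tau_N \otimes_Q \tau_M)(x \otimes y^{\op})$ for all $x \in N$ and $y \in M$, establishing disjointness. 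I expect the most delicate point to be the verification of $Q$-bimodularity of $T$, which hinges on the GNS-level agreement of the two $Q$-actions on $\cH_\varphi$ and careful bookkeeping with the reversed multiplication in $M^{\op}$; once $T$ has been set up with its properties, the compactness-to-weak-mixing step is essentially immediate from Lemma \ref{lem:cap}.
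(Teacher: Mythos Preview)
Your proposal is correct and follows essentially the same route as the paper: both encode the joining as a $\Gamma$-equivariant, right $Q$-linear contraction $T\colon L^2(M)\to L^2(N)$ (the paper obtains it via the cp map $\Phi$ associated to the pointed bimodule $(\cH_\varphi,\xi_\varphi)$, while you write down the pairing directly), transfer relative almost periodicity through $T$ using Lemma~\ref{lem:cap}, and then use weak mixing to force the image into $L^2(Q)$, identifying $T$ with $E_Q$. The only cosmetic difference is that the paper phrases the final step as a contradiction, whereas you argue directly that $T(\hat y)=\widehat{E_Q(y)}$; the underlying computation is the same.
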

\begin{proof}
Suppose $\Gamma\act (Q\subset M)$ is a relatively compact tracial $W^*$-dynamical system which is not disjoint from $\Gamma\act (Q\subset N)$, and let $\varphi:M\algot N^{\op}\rightarrow\C$ be a non-trivial joining. Consider the $M$-$N$-bimodule $L^2(M\algot N^{\op},\varphi)$ which is the separation-completion of $M\algot N^{\op}$ for $\varphi$, and denote by $\xi_\varphi$ the class of $1_M\ot 1_N$. Let $\Phi:M\rarrow N$ denote the corresponding completely positive map (see Section~\ref{sec:bimod}), and denote by $T_\Phi:L^2(M)\rarrow L^2(N)$ the associated linear operator. Observe that by construction, we have that $\Phi|_Q = \id_Q$. Denoting by $\pi_M:\Gamma\rarrow \cU(L^2(M))$ and $\pi_N:\Gamma\rarrow \cU(L^2(N))$ the corresponding Koopman representations of the given actions, we see that moreover
	\[
	T_\Phi \circ \pi_M = \pi_N \circ T_\Phi.
	\]
	It is not difficult to check that $T_\Phi(L^2(M))\subseteq L^2(Q)$ if and only if $\varphi$ was the relatively independent joining $\tau_M\ot_Q \tau_N$ (since in that case, necessarily $T_\Phi = E_Q$ as it is a completely positive projection onto $Q$). Since we are assuming this is not the case, we can find $\xi\notin L^2(Q)$ such that $T_\Phi(\xi)\notin L^2(Q)$. Since $M$ is a compact extension of $Q$, this means that $\xi$ is relatively almost periodic, that is,  for every $\eps>0$, there exist $\eta_1, \ldots, \eta_k\in L^2(M)$ such that for every $\gamma\in \Gamma$, there exist $\kappa_i(\gamma)\in Q$ with $\sup_{i,\gamma}\norm{\kappa_i(\gamma)}<\infty$ and 
	\[
	\norm{\gamma\cdot \xi - \sum_{i=1}^k \eta_i\kappa_i(\gamma)}_2 \leq \eps.
	\]
	Using that $T_\Phi|_{L^2(Q)} = \id$, it is then a straightforward calculation to check that for any given $\gamma\in\Gamma$ we have
	\[
	\norm{\gamma\cdot T_\Phi(\xi) - \sum_{i=1}^k T_\Phi(\eta_i)\kappa_i(\gamma)}_2 \leq \eps,
	\]
	in other words $T_\Phi(\xi)\in L^2(N)$ is relatively almost periodic, contradicting the fact that $N$ is a weakly mixing extension of $Q$.
\end{proof}

\begin{lemma}
Suppose the tracial $W^*$-dynamical inclusion $\Gamma\act (Q\subset N)$, with $Q\subset N$ quasi-regular, is disjoint from every system which is a compact extension of $Q$. Then $N$ is a weakly mixing extension of $Q$.
\end{lemma}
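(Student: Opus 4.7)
I argue by contrapositive. Suppose $N$ is not a weakly mixing extension of $Q$; the goal is to construct a compact extension of $Q$ that is not disjoint from $N$.

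The natural candidate is $M \coloneqq \cAP_{Q,N}$, the space of relatively almost periodic elements, which by Proposition~\ref{prop:quasireg} (invoking quasi-regularity) is a $\Gamma$-invariant von Neumann subalgebra of $N$, and by construction is a compact extension of $Q$. To see that $Q \subsetneq M$, observe that under quasi-regularity the identity $\cWM_{Q,N} = \ker E_{\cAP_{Q,N}}$ from the preceding corollary, combined with the $L^2$-dichotomy, forces the identification $\cAP^2_{Q,N} = L^2(M)$. Since $N$ is not weakly mixing over $Q$, the dichotomy gives $\cAP^2_{Q,N} \supsetneq L^2(Q)$, whence $M \supsetneq Q$.

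I next define a joining $\varphi\colon M \algot N^{\op} \to \C$ via
\[
\varphi(a \ot b^{\op}) \coloneqq \tau_N(ab),
\]
using the concrete inclusion $M \subset N$. The axioms of Definition~\ref{def:joining} are readily verified: the marginal conditions reduce to $\tau_M = \tau_N|_M$ and $\tau_Q = \tau_N|_Q$; $\Gamma$-invariance is inherited from $\tau_N$; and positivity holds because the cyclicity of $\tau_N$ yields, for any $z = \sum_i a_i \ot b_i^{\op}$,
\[
\varphi(z^*z) = \sum_{i,j}\tau_N(a_i^* a_j b_j b_i^*) = \sum_{i,j}\tau_N\bigl((a_i b_i)^*(a_j b_j)\bigr) = \Bigl\|\sum_i a_i b_i\Bigr\|_2^2 \geq 0.
\]
To see that $\varphi$ differs from the relatively independent joining, pick any $a \in M \setminus Q$; the strict inequality $\|E_Q(a)\|_2 < \|a\|_2$ then gives
\[
\varphi(a \ot (a^*)^{\op}) = \|a\|_2^2 > \|E_Q(a)\|_2^2 = (\tau_M \ot_Q \tau_N)(a \ot (a^*)^{\op}),
\]
contradicting the assumed disjointedness of $N$ from the compact extension $M$.

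The main obstacle is establishing the strict inclusion $Q \subsetneq M$: without quasi-regularity, $\cAP_{Q,N}$ need not even be a von Neumann subalgebra, and the crucial identification $\cAP^2_{Q,N} = L^2(\cAP_{Q,N})$ breaks down. This is precisely where the quasi-regularity hypothesis is essential; all remaining steps are routine verifications.
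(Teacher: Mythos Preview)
Your argument is correct and follows essentially the same route as the paper: both take the contrapositive, set $P = M = \cAP_{Q,N}$ via Proposition~\ref{prop:quasireg}, and exhibit the ``diagonal'' joining coming from the inclusion $M\subset N$. Your formula $\varphi(a\ot b^{\op})=\tau_N(ab)$ is exactly the paper's $\varphi(x\ot y)=\Delta_P(x\ot E_P(y))$ once one unwinds $\Delta_P$ and uses that $\tau_N\circ E_P=\tau_N$ and $E_P$ is $P$-bimodular; and your test element $a\ot(a^*)^{\op}$ with $a\in M\setminus Q$ is the obvious way to verify $\varphi\neq\tau_M\ot_Q\tau_N$. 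The only real difference is that you spell out details the paper leaves implicit: the verification of positivity and $\Gamma$-invariance of $\varphi$, and in particular the strict inclusion $Q\subsetneq M$, which you extract from the $L^2$-dichotomy and the identification $\cAP^2_{Q,N}=L^2(\cAP_{Q,N})$ in the quasi-regular case (cf.\ Remark~\ref{rk:MaxCpt} and the corollary preceding this lemma), whereas the paper simply asserts the existence of such a $P$.
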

\begin{proof}
Assume $N$ is not a weakly mixing extension of $Q$, and use Proposition~\ref{prop:quasireg} to find a subsystem $P\subset N$ such that $P$ is a compact extension of $Q$. Then it is easily seen that $\varphi: P\algot N^{\op}\rarrow \C$ defined by 
	\[
	\varphi(x\ot y) = \Delta_P(x\ot E_P(y))
	\]
is a joining of $P$ and $N$ over $Q$ which is different from $\tau_P\ot_Q\tau_N$.
\end{proof}

\section{Cubic systems and the Host--Kra--Ziegler tower of compact extensions}\label{sec-hk}

\textbf{Convention.} Throughout this section, $\Gamma$ will denote a countable discrete abelian group, and all von Neumann algebras are assumed to have separable predual.

Following closely the constructions in \cite{host-kra}, we will introduce Host--Kra cubic systems associated to an ergodic $W^*$-dynamical system, which will provide us with a tower of compact extensions, generally different from, but somewhat finer than, the Furstenberg--Zimmer tower of maximal compact extensions. Moreover, contrary to the case of maximal compact extensions, we don't need any additional assumptions on the inclusions $Q\subset N$ to make sure the subsequent extensions are von Neumann subalgebras, thus giving a complete analogue of the Host--Kra--Ziegler tower of compact extensions from the commutative case.

We will be dealing with $2^k$-tensor powers of von Neumann algebras, and we start with introducing some notation. Let $k\geq 0$ be an integer. Denote $V_k\coloneqq \{0,1\}^k$ and $N^{[k]}\coloneqq N^{\ot 2^k}$ for a von Neumann algebra $N$. We will use elements $\epsilon$ of $V_k$ to index elements $x=(x_\epsilon)_{\epsilon\in V_k}$ of $N^{[k]}$. Given a map $f:M\rarrow N$ between von Neumann algebras, we denote by $f^{[k]}:M^{[k]}\rarrow N^{[k]}$ the componentwise application of $f$, that is, $(f^{[k]}(x))_\epsilon = f(x_\epsilon)$.

Note that we can identify $N^{[k+1]}$ with $N^{[k]}\ot N^{[k]}$. By convention, we will do this based on the last coordinate, writing $x = (x',x'')$ for $x\in N^{[k+1]}$ where 
\[
x'_\epsilon = x_{\epsilon 0} \quad \text{and} \quad x''_\epsilon = x_{\epsilon 1}.
\]

For the remainder of the section, we fix an ergodic action $\Gamma\act^\sigma (N,\tau)$. For each $k\geq 0$ we can consider the diagonal action $\sigma^{[k]}$ of $\Gamma$ on $N^{[k]}$, and we denote by $\cI^{[k]}\subset N^{[k]}$ the subalgebra of $\Gamma$-invariant vectors. We will next define by induction a $\Gamma$-invariant state $\tau^{[k]}$ on $N^{[k]}$, which will essentially arise as an iterated joining over the $\cI^{[j]}$. Before we do this, we make the following observation.

\begin{lemma}\label{lem:hyperfinite}
    For every $k\geq 0$, $\cI^{[k]}$ is a hyperfinite von Neumann algebra.
\end{lemma}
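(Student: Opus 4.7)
The plan is to show that $\cI^{[k]}$ is injective as a von Neumann algebra with separable predual; Connes' theorem will then yield hyperfiniteness. The first and key step is to identify the maximal compact part $\cAP\coloneqq \cAP_{\C,N}$ of the ambient action as a hyperfinite algebra. By Theorem~\ref{thm:compchar}(3), $L^2(\cAP)$ is the closure of the union of finite-dimensional $\Gamma$-invariant subspaces over $\C$, so the Koopman representation of $\Gamma$ on $L^2(\cAP)$ has pure point spectrum and decomposes into character subspaces $H_\chi$ indexed by $\chi\in\hat\Gamma$. Ergodicity of $\Gamma\act N$ (which descends to $\cAP$) forces each nonzero $H_\chi$ to be one-dimensional and spanned by a unitary $u_\chi\in\cAP$; these unitaries multiply according to a $\mathbb{T}$-valued $2$-cocycle $c$ on the countable abelian spectrum group $\Lambda\coloneqq\{\chi\in\hat\Gamma : H_\chi\neq 0\}$. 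This identifies $\cAP$ with the twisted group von Neumann algebra $L(\Lambda,c)$, which is hyperfinite since $\Lambda$ is amenable.

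Next I would iterate Theorem~\ref{thm:RelProd} with $Q=\C$ (which lies in every center). Using the binary product structure $N^{[k]}=N^{[k-1]}\otimes N^{[k-1]}$, and that the identification $N^\op\cong N$ via $x\mapsto x^*$ is $\Gamma$-equivariant and preserves almost periodicity, induction on $k$ gives
\[
\cAP_{\C,N^{[k]}} \cong \cAP^{\otimes 2^k}.
\]
Since finite tensor products of hyperfinite tracial von Neumann algebras remain hyperfinite, $\cAP_{\C,N^{[k]}}$ is hyperfinite.

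Finally, any $\Gamma$-fixed element of $N^{[k]}$ has a singleton, hence precompact orbit and is therefore relatively almost periodic, giving the inclusion $\cI^{[k]}\subseteq \cAP_{\C,N^{[k]}}$. Since $\Gamma$ is countable amenable, averaging along a F\o{}lner sequence yields a $\tau$-preserving conditional expectation $\cAP_{\C,N^{[k]}}\to \cI^{[k]}$, so $\cI^{[k]}$ is the image of a conditional expectation from an injective algebra, hence itself injective; Connes' theorem now finishes the argument. The main obstacle is the first step: extracting the character structure of $\cAP$ in a clean way requires care, and an alternative would be to extend $\Gamma\act\cAP$ to a compact metrizable group action (using precompactness of orbits in $L^2$) and to invoke Wassermann's theorem that ergodic actions of compact groups on tracial von Neumann algebras produce hyperfinite algebras.
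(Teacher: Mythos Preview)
Your proposal is correct and follows essentially the same route as the paper: show that $\cAP_N$ is hyperfinite, use Theorem~\ref{thm:RelProd} (with $Q=\C$) inductively to identify $\cAP_{\C,N^{[k]}}$ with $\cAP_N^{\otimes 2^k}$, and then observe that $\cI^{[k]}$ sits inside this hyperfinite tracial algebra, hence is itself hyperfinite.

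Two small remarks on execution. First, for the hyperfiniteness of $\cAP_N$ the paper simply invokes the theorem of H\o egh-Krohn, Landstad and St\o rmer \cite{HLS} (ergodic compact actions land on hyperfinite algebras), which is exactly the ``alternative'' you mention at the end---you attribute it to Wassermann, but the reference the paper uses is \cite{HLS}. Your explicit twisted-group-algebra description of $\cAP$ is morally this theorem in the abelian case; it works, but as you note, showing that the $L^2$-eigenvectors actually lie in $N$ (so that $\xi^*\xi\in\C$ forces $\xi$ bounded) needs a line of justification. Second, your final step---producing a conditional expectation onto $\cI^{[k]}$ via a F\o lner average---is unnecessary: since $(\cAP_N)^{\otimes 2^k}$ is tracial, the trace-preserving conditional expectation onto any von Neumann subalgebra exists automatically, and injectivity passes down through it. The paper simply states ``since $\cAP_N$ is hyperfinite, the same holds for $\cI^{[k]}$'' without further comment.
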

\begin{proof}
By the well-known theorem of H\o egh-Krohn, Landstad and St\o rmer \cite{HLS}, we know that if $\Gamma$ admits an ergodic compact action on a von Neumann algebra $M$, then $M$ is necessarily hyperfinite. Therefore the maximal compact part $\cAP_N\subset N$ is hyperfinite.

Since $\Gamma\act (N,\tau)$ is ergodic, we firstly have that $\cI^{[0]}$ is trivial, and therefore hyperfinite. Let $k\geq 1$. Then by Proposition~\ref{cor:invariantvectors}, 
\[
\cI^{[k]} \subset \cAP_{N^{[k-1]}}\ot \cAP_{N^{[k-1]}}.
\]
By Theorem~\ref{thm:RelProd} and induction, we moreover have
\[
\cAP_{N^{[k-1]}} = (\cAP_N)^{\ot 2^{k-1}},
\]
and hence
\[
\cI^{[k]} \subset (\cAP_N)^{\ot 2^{k}}.
\]
Since $\cAP_N$ is hyperfinite, the same holds for $\cI^{[k]}$.
\end{proof}

\begin{definition}
Let $\tau^{[0]}=\tau$. Assuming $\tau^{[k]}$ is defined, we define $\tau^{[k+1]}$ on $N^{[k+1]}\cong N^{[k]}\otb N^{[k]}$ by
\[
\tau^{[k+1]}(x\ot y) \coloneqq \tau^{[k]}(E_{\cI^{[k]}}(x)E_{\cI^{[k]}}(y)).
\]
Note that $\tau^{[k+1]}:N^{[k+1]}\rarrow\C$ is well-defined, since by Lemma~\ref{lem:hyperfinite}, $\cI^{[k]}$ is hyperfinite, and hence by Connes' theorem \cite[Theorem~5.1]{connes-hyperfinite}, the map $\cI^{[k]}\algot (\cI^{[k]})^\op\rarrow \cI^{[k]}: a\ot b\mapsto ab$ is min-continuous.
\end{definition}

\begin{remark}
    Note that the definition essentially states that $\tau^{[k+1]}$ is the relatively independent joining of $(N^{[k]},\tau^{[k]})$ with itself over $\cI^{[k]}$. However, we do not use this terminology here, because we have only defined joinings for \textit{tracial} von Neumann algebras in this paper.
\end{remark}

We note that since the system $\Gamma\act^\sigma (N,\tau)$ is assumed to be ergodic, $\cI^{[0]}$ is trivial and $\tau^{[1]} = \tau\ot\tau$. Also, if the system is weakly mixing, then each $\cI^{[k]}$ is trivial and $\tau^{[k]} = \tau^{\ot 2^k}$ for every $k\geq 0$. On the other hand, if the system is not weakly mixing, then the $\cI^{[k]}$ are not trivial for $k\geq 1$, and reveal some information about the ``compact parts'' of the system.

Before continuing, we introduce the so-called \textit{side transformations}. For this, note that we can view $V_k$ as indexing the vertices of the $k$-cube, and so we can talk about its faces. More precisely, for $0\leq l\leq k$, $J\subset \{1, \ldots, k\}$ with $\abs{J}=k-l$, and $\eta\in \{0,1\}^J$, we call the subset
\[
\alpha\coloneqq \{\epsilon\in V_k\mid \epsilon_j=\eta_j \text{ for every } j\in J\}
\]
a face of dimension $l$ of $V_k$, or an $l$-face. We also call the faces of dimension $k-1$ the \textit{sides} of $V_k$.

Let $\alpha$ be a face of $V_k$ and for every $\gamma\in\Gamma$, denote by $\sigma_{\gamma,\alpha}^{[k]}:N^{[k]}\rarrow N^{[k]}$ the map given by
	\begin{align*}
		(\sigma_{\gamma,\alpha}^{[k]} x)_\epsilon = 
		\begin{cases}
			\sigma_\gamma(x_\epsilon) &\text{if } \epsilon\in\alpha,\\
			x_\epsilon &\text{otherwise}.
		\end{cases}
	\end{align*}
	We call $\sigma_{\gamma,\alpha}^{[k]}$ a face transformation. If $\alpha$ is a side we also call it a side transformation. We denote by $\cT_{k-1}^{[k]}$ the group generated by all side transformations, that is, 
	\[
	\cT_{k-1}^{[k]} \coloneqq \langle\sigma_{\gamma,\alpha}\mid \gamma\in \Gamma, \alpha \text{ a side of } V_k\rangle
	\]
	The subgroup generated by those side transformations $\sigma_{\gamma,\alpha}$ where $\alpha$ is a side not containing $0$ will be denoted by $\cT_*^{[k]}$.

\begin{lemma}\label{lem:invTk-1}
For every $k\geq 1$, the state $\tau^{[k]}$ is invariant for the action of $\cT_{k-1}^{[k]}$ on $N^{[k]}$. 
\end{lemma}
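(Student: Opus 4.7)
The plan is to proceed by induction on $k$, using the identification $N^{[k]} \cong N^{[k-1]} \otb N^{[k-1]}$ based on the last coordinate. The base case $k=1$ is immediate: ergodicity forces $\cI^{[0]} = \C$, so $\tau^{[1]} = \tau \otimes \tau$, and the two side transformations $\sigma_{\gamma,\{0\}}^{[1]}$ and $\sigma_{\gamma,\{1\}}^{[1]}$ act as $\sigma_\gamma$ on one factor and trivially on the other, hence preserve $\tau \otimes \tau$.

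For the inductive step, I would split the $2k$ sides of $V_k$ into two types according to whether the fixed coordinate is the last one (``horizontal'' sides) or one of the first $k-1$ (``vertical'' sides). A horizontal side $\alpha$ with $\epsilon_k = 0$ (say) corresponds, under $N^{[k]} \cong N^{[k-1]} \otb N^{[k-1]}$, to the automorphism $\sigma_\gamma^{[k-1]} \otimes \id$. Since $\sigma_\gamma^{[k-1]}$ fixes every element of $\cI^{[k-1]}$ and is $\tau^{\otimes 2^{k-1}}$-preserving, it commutes with $E_{\cI^{[k-1]}}$ and in fact satisfies $E_{\cI^{[k-1]}} \circ \sigma_\gamma^{[k-1]} = E_{\cI^{[k-1]}}$. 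Plugging this into the defining formula of $\tau^{[k]}$ gives invariance under horizontal side transformations.

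For a vertical side $\alpha$ with $\epsilon_j$ fixed for some $j<k$, the corresponding transformation acts as $\sigma_{\gamma,\alpha'}^{[k-1]} \otimes \sigma_{\gamma,\alpha'}^{[k-1]}$, where $\alpha'$ is the analogous side of $V_{k-1}$. The key observation is that since $\Gamma$ is abelian, $\sigma_{\gamma,\alpha'}^{[k-1]}$ commutes with every $\sigma_\delta^{[k-1]}$; hence $\sigma_{\gamma,\alpha'}^{[k-1]}$ preserves $\cI^{[k-1]}$ as a subalgebra. Being $\tau^{\otimes 2^{k-1}}$-preserving and preserving $\cI^{[k-1]}$, it then commutes with the conditional expectation $E_{\cI^{[k-1]}}$. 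Writing $T \coloneqq \sigma_{\gamma,\alpha'}^{[k-1]}$, we therefore obtain
\[
\tau^{[k]}((T \otimes T)(x \otimes y)) = \tau^{[k-1]}\bigl(T(E_{\cI^{[k-1]}}(x)) \cdot T(E_{\cI^{[k-1]}}(y))\bigr) = \tau^{[k-1]}\bigl(T(E_{\cI^{[k-1]}}(x) E_{\cI^{[k-1]}}(y))\bigr),
\]
and the inductive hypothesis $\tau^{[k-1]} \circ T = \tau^{[k-1]}$ (valid since $T \in \cT_{k-2}^{[k-1]}$) yields the desired equality $\tau^{[k]} \circ (T \otimes T) = \tau^{[k]}$.

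The main subtlety is ensuring that $\sigma_{\gamma,\alpha'}^{[k-1]}$ preserves $\cI^{[k-1]}$ so that it commutes with $E_{\cI^{[k-1]}}$; this is exactly where commutativity of $\Gamma$ enters in an essential way, and without it side transformations would in general fail to preserve the invariant subalgebra. All other ingredients — the identification $N^{[k]} \cong N^{[k-1]} \otb N^{[k-1]}$, the well-definedness of $\tau^{[k]}$ from Connes' theorem applied to the hyperfinite algebra $\cI^{[k-1]}$ (Lemma \ref{lem:hyperfinite}), and the standard commutation of trace-preserving automorphisms with conditional expectations onto invariant subalgebras — have already been established.
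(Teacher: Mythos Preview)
Your proposal is correct and follows exactly the approach of \cite[Lemma 3.3]{host-kra}, which is precisely what the paper defers to (the paper's proof reads simply ``The proof from \cite[Lemma 3.3]{host-kra} applies mutatis mutandis. Note that we need the commutativity of $\Gamma$ here.''). Your induction splitting the sides into ``horizontal'' (last coordinate fixed) and ``vertical'' (an earlier coordinate fixed) types, together with the observation that commutativity of $\Gamma$ is needed exactly to ensure the vertical side transformations preserve $\cI^{[k-1]}$ and hence commute with $E_{\cI^{[k-1]}}$, is the Host--Kra argument spelled out in full.
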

\begin{proof}
The proof from \cite[Lemma 3.3]{host-kra} applies mutatis mutandis. Note that we need the commutativity of $\Gamma$ here. 
\end{proof}

Let $\cJ^{[k]}\subset N^{[k]}$ denote the subalgebra of invariant vectors under the action of $\cT_*^{[k]}$, and denote by $L^2(N^{[k]},\tau^{[k]})$ the GNS-construction of $N^{[k]}$ with respect to the state $\tau^{[k]}$.

\begin{proposition}\label{prop:zerocoord}
Inside $L^2(N^{[k]},\tau^{[k]})$, we have that $L^2(\cJ^{[k]},\tau^{[k]}) = L^2(N,\tau)\ot 1^{\ot 2^k-1}$, where the first factor corresponds to the coordinate $0\in V_k$.
\end{proposition}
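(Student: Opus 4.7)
The plan is to prove the two inclusions separately, with the nontrivial inclusion $L^2(\cJ^{[k]},\tau^{[k]}) \subseteq L^2(N,\tau)\otimes 1^{\otimes 2^k-1}$ handled by induction on $k$. For the easy direction $\supseteq$, observe that if $\alpha$ is a side of $V_k$ not containing $0$, then $\sigma_{\gamma,\alpha}^{[k]}$ acts as the identity on the coordinate $0$ and as $\sigma_\gamma$ on the other coordinates; since $\sigma_\gamma(1)=1$, any element of the form $x\otimes 1^{\otimes 2^k-1}$ (with $x$ at position $0$) is fixed by $\cT_*^{[k]}$. The base case $k=1$ is also immediate: by ergodicity $\cI^{[0]}=\C$, so $\tau^{[1]}=\tau\otimes\tau$, $\cT_*^{[1]}$ is generated by $\mathrm{id}\otimes\sigma_\gamma$, and the fixed points are $N\otimes 1$.

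For the inductive step, identify $V_{k+1}=V_k^{(0)}\sqcup V_k^{(1)}$ according to the last coordinate, giving $N^{[k+1]}=N^{[k]}\otimes N^{[k]}$. By definition $\tau^{[k+1]}$ is the relative joining over $\cI^{[k]}$, and using Lemma~\ref{lem:joinConnes} (together with traciality of $\tau^{[k]}$, which reconciles the $\tau^{[k]}\otimes_{\cI^{[k]}}\tau^{[k]}$ description on $N^{[k]}\otimes(N^{[k]})^{\op}$ with the one on $N^{[k]}\otimes N^{[k]}$) one obtains a Hilbert space isomorphism
\[
L^2(N^{[k+1]},\tau^{[k+1]})\;\cong\;L^2(N^{[k]},\tau^{[k]})\otimes_{\cI^{[k]}} L^2(N^{[k]},\tau^{[k]}).
\]
Under this identification, the generators of $\cT_*^{[k+1]}$ split into two families: the transformation $\sigma_{\gamma,\alpha_{k+1}}=\mathrm{id}\otimes\sigma^{[k]}_\gamma$ coming from the ``new'' side, and the transformations $\sigma_{\gamma,\alpha_j}=\sigma^{[k]}_{\gamma,\alpha'_j}\otimes\sigma^{[k]}_{\gamma,\alpha'_j}$ for $j=1,\dots,k$ coming from sides of $V_k$ not containing $0$ (applied diagonally on both tensor factors).

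Given $f\in L^2(\cJ^{[k+1]},\tau^{[k+1]})$, the first family forces $f$ to be invariant under $\mathrm{id}\otimes_{\cI^{[k]}}\sigma^{[k]}_\gamma$ for all $\gamma$. Since the fixed vectors of the unitary $\Gamma$-representation on $L^2(N^{[k]},\tau^{[k]})$ are exactly $L^2(\cI^{[k]},\tau^{[k]})$, the corresponding orthogonal projection is $\mathrm{id}\otimes_{\cI^{[k]}}E_{\cI^{[k]}}$ and its range is $L^2(N^{[k]})\otimes_{\cI^{[k]}}L^2(\cI^{[k]})\cong L^2(N^{[k]},\tau^{[k]})$ via $\xi\otimes_{\cI^{[k]}}a\mapsto\xi a$. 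Thus $f=\xi\otimes 1$ for some $\xi\in L^2(N^{[k]},\tau^{[k]})$. Invariance under the second family then yields $\sigma^{[k]}_{\gamma,\alpha'_j}(\xi)\otimes 1=\xi\otimes 1$, so $\xi\in L^2(\cJ^{[k]},\tau^{[k]})$. By the inductive hypothesis, $\xi=g\otimes 1^{\otimes 2^k-1}$ with $g\in L^2(N,\tau)$ at position $0\in V_k$, and therefore $f=g\otimes 1^{\otimes 2^{k+1}-1}$ at position $0\in V_{k+1}$.

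The main obstacle is the careful bookkeeping of the Hilbert space identification $L^2(N^{[k+1]},\tau^{[k+1]})\cong L^2(N^{[k]})\otimes_{\cI^{[k]}} L^2(N^{[k]})$ and showing that the action of the generators of $\cT_*^{[k+1]}$ translates correctly under this identification; once this is set up, each inductive step is essentially an application of the Mean Ergodic Theorem on the relevant tensor factor, followed by an elementary invariance computation on the remaining factor.
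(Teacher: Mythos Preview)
Your proof is correct and follows essentially the same approach as the paper: induction on $k$, using invariance under the ``new'' side $\{0,1\}^k\times\{1\}$ (i.e., $\mathrm{id}\otimes\sigma_\gamma^{[k]}$) to reduce any $\cT_*^{[k+1]}$-invariant vector to the form $\xi\otimes 1$, and then the remaining $k$ sides, which act diagonally as $\sigma^{[k]}_{\gamma,\alpha'_j}\otimes\sigma^{[k]}_{\gamma,\alpha'_j}$, to force $\xi\in L^2(\cJ^{[k]})$ and apply the inductive hypothesis. The only cosmetic difference is that you package the first reduction via Lemma~\ref{lem:joinConnes} and an appeal to ``traciality of $\tau^{[k]}$'', which is not established in the paper for $k\geq 2$ (and Lemma~\ref{lem:joinConnes} is stated only for tracial systems); the paper instead obtains the needed identity $a\otimes e = ae\otimes 1$ in $L^2(N^{[k+1]},\tau^{[k+1]})$ for $e\in\cI^{[k]}$ directly from the defining formula $\tau^{[k+1]}(x\otimes y)=\tau^{[k]}(E_{\cI^{[k]}}(x)E_{\cI^{[k]}}(y))$, which is all that is actually required.
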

\begin{proof}
Clearly, any element from $L^2(N)\ot 1^{\ot 2^k-1}$ is invariant under the action of $\cT_*^{[k]}$. We proceed to prove the reverse inclusion by induction. For $k=1$, we have $N^{[1]}=N\otb N$ and $\cT_*^{[k]}$ contains $\id\times \sigma_\gamma$ for $\gamma\in\Gamma$, which easily implies the result. Now let $k\geq 1$ and suppose $x\in \cJ^{[k+1]}$. Write $x=\sum_i x_{1,i}\ot x_{2,i}$ where for each $i$, $x_{1,i},x_{2,i}\in N^{[k]}$. Let $\alpha$ be the face $\{0,1\}^k\times \{1\}$ of $V_{k+1}$. Hence for any $\gamma\in\Gamma$, we have $\sigma_{\gamma,\alpha}^{[k+1]} = \id_{N^{[k]}}\times \sigma_{\gamma}^{[k]}$. Denote by $I_\alpha\supset \cJ^{[k+1]}$ the subalgebra of invariant vectors under the action of $\sigma_{\gamma,\alpha}^{[k+1]}$, $\gamma\in\Gamma$. From the above, we observe that we have $I_\alpha = N^{[k]}\ot \cI^{[k]}$.
We can then compute
\begin{align*}
	E_{I_\alpha}(x) &= \sum_i E_{I_\alpha}(x_{1,i}\ot x_{2,i}) \\
	&= \sum_i x_{1,i}\ot E_{\cI^{[k]}}(x_{2,i})
\end{align*}
By the definition of $\tau^{[k+1]}$, the latter is equal to $\sum_i x_{1,i} E_{\cI^{[k]}}(x_{2,i})\ot 1$ in the GNS-construction of $N^{[k+1]}$ with respect to $\tau^{[k+1]}$. Since $x\in \cJ^{[k+1]}\subset I_\alpha$ and hence $x=E_{I_\alpha}(x)$, we deduce that 
\[
x = x'\ot 1
\]
for some $x'\in L^2(N^{[k]})$. It is then easy to see that $x'\in L^2(\cJ^{[k]})$, and by induction it thus follows that $x'\in  L^2(N)\ot 1^{\ot 2^k-1}$, which finishes the proof.
\end{proof}

\subsection{Construction of the compact extensions arising from cubic systems}

First, we establish some additional notation, mostly following \cite[Section 4]{host-kra}. We write $V_k^*\coloneqq V_k\setminus \{0\}$, $N^{[k]^*}\coloneqq \ot_{\epsilon\in V_k^*} N$, and $\tau^{[k]^*}$ the restriction of $\tau^{[k]}$ to $N^{[k]^*}$. We represent a point $x\in N^{[k]}$ as $x=(x_0,\tilde x)$, where $x_0\in N$ and $\tilde x\in N^{[k]^*}$, and also write $N^{[k]} = N_0\ot N^{[k]^*}$, where $N_0\cong N$ is the copy of $N$ at coordinate $0\in V^{[k]}$. Next, we note that we can restrict the actions of $\cT_{k-1}^{[k]}$ and its subgroup $\cT_*^{[k]}$ to $N_0$ and $N^{[k]^*}$.
It thus makes sense to define the sets
\[
\cI^{[k]^*}\coloneqq \Inv(\cT_{k-1}^{[k]}|_{N^{[k]^*}})\subset N^{[k]^*},
\]
and
\[
\cJ^{[k]^*}\coloneqq \Inv(\cT_{*}^{[k]}|_{N^{[k]^*}})\subset N^{[k]^*}.
\]
By construction, if $x\in\cJ^{[k]^*}$ then $1\ot x\in\cJ^{[k]}$. Using Proposition \ref{prop:zerocoord}, we can thus find $x_0\in N$ such that $\norm{1\ot x - x_0\ot 1^{\ot 2^k-1}}_{\tau^{[k]}} = 0$. This observation leads us to the following definition.

\begin{definition}
	Given an integer $k\geq 1$, we define $\cZ_{k-1}\subset N$ as
	\[
	\cZ_{k-1} \coloneqq \{x_0\in N\mid \exists x\in N^{[k]^*}\text{ such that } \norm{1\ot x - x_0\ot 1^{\ot 2^k-1}}_{\tau^{[k]}} = 0\}.
	\]
\end{definition}

We note that $\cZ_{k-1}\subset N$ is a von Neumann subalgebra which can be identified with $\cJ^{[k]^*}\subset N^{[k]^*}$. Also, it is easy to see that $\cZ_{k-1}$ is $\Gamma$-invariant, and therefore provides us with a subsystem of the original action $\Gamma\act N$. A first easy observation is that these subsystems form an increasing sequence.

\begin{lemma}\label{lem:cZincreasing}
	For every $k\geq 1$, $\cZ_{k-1}\subseteq\cZ_{k}$, that is,  the $\cZ_k$ form an increasing sequence of subsystems.
\end{lemma}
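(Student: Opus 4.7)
The plan is to produce, from a witness $\xi\in N^{[k]^*}$ for $x_0\in \cZ_{k-1}$, a new witness $\zeta\in N^{[k+1]^*}$ for $x_0\in \cZ_k$ by ``trivially doubling'' along the last coordinate. The key identification is to view $V_{k+1}=V_k\times\{0,1\}$ with the vertex $0\in V_{k+1}$ corresponding to $(0,0)$, so that $N^{[k+1]}\cong N^{[k]}\otimes N^{[k]}$ splits according to the value of the last coordinate. Under this splitting, the vertex $0$ sits in the first ($L$-)copy, and the ``all-$1$'' vector $1^{\otimes 2^k}$ in the second ($R$-)copy projects to $1\in \cI^{[k]}$.

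Concretely, I would define $\zeta\in N^{[k+1]^*}$ by $\zeta_{(\delta,0)}\coloneqq \xi_\delta$ for $\delta\in V_k^*$, and $\zeta_{(\delta,1)}\coloneqq 1$ for all $\delta\in V_k$. Unpacking the definitions in the $L\otimes R$ splitting, $1\otimes\zeta\in N^{[k+1]}$ becomes $(1\otimes\xi)\otimes 1^{\otimes 2^k}$ and $x_0\otimes 1^{\otimes 2^{k+1}-1}$ becomes $(x_0\otimes 1^{\otimes 2^k-1})\otimes 1^{\otimes 2^k}$. Hence
\[
1\otimes\zeta - x_0\otimes 1^{\otimes 2^{k+1}-1} = \eta\otimes 1^{\otimes 2^k},
\]
where $\eta\coloneqq 1\otimes\xi - x_0\otimes 1^{\otimes 2^k-1}\in N^{[k]}$ has $\|\eta\|_{\tau^{[k]}}=0$ by hypothesis.

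It then remains to check that $\|\eta\otimes 1^{\otimes 2^k}\|_{\tau^{[k+1]}}=0$, which follows directly from the defining formula $\tau^{[k+1]}(a\otimes b)=\tau^{[k]}(E_{\cI^{[k]}}(a)E_{\cI^{[k]}}(b))$ applied to $a=\eta^*\eta$ and $b=1$: indeed $E_{\cI^{[k]}}(1)=1$ and $\tau^{[k]}(E_{\cI^{[k]}}(\eta^*\eta))=\tau^{[k]}(\eta^*\eta)=0$. This exhibits $\zeta$ as the required witness and gives $x_0\in\cZ_k$.

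The steps are all routine once the right candidate $\zeta$ is identified; no serious obstacle arises. The only point requiring a modicum of care is bookkeeping of the $V_{k+1}=V_k\times\{0,1\}$ indexing convention (so that the vertex $0$ and the subset $V_{k+1}^*$ are correctly split between the $L$ and $R$ tensor factors), and the verification that $\tau^{[k+1]}$ is well-defined on the resulting element, which is immediate from Lemma~\ref{lem:hyperfinite} and the definition of $\tau^{[k+1]}$.
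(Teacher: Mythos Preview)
Your proof is correct and follows essentially the same approach as the paper. Your witness $\zeta$ is exactly the paper's $\tilde x\coloneqq x\otimes 1^{\otimes 2^k}$ (with $x$ in place of your $\xi$), and you have simply spelled out the ``straightforward calculation'' the paper leaves to the reader, namely that $\tau^{[k+1]}(\eta^*\eta\otimes 1)=\tau^{[k]}(E_{\cI^{[k]}}(\eta^*\eta))=\tau^{[k]}(\eta^*\eta)=0$, using that $\tau^{[k]}$ is invariant under the diagonal $\Gamma$-action (a consequence of Lemma~\ref{lem:invTk-1}).
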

\begin{proof}
If $x_0\in\cZ_{k-1}$, then there exists $x\in N^{[k]^*}$ such that
\[
\norm{1\ot x - x_0 \ot 1^{\ot 2^k-1}}_{\tau^{[k]}} = 0.
\]
Using the definition of $\tau^{[k+1]}$, it is then a straightforward calculation to check that with $\tilde x\coloneqq x\ot 1^{\ot 2^k}\in N^{[k+1]^*}$,
\[
\norm{1\ot \tilde x - x_0 \ot 1^{\ot 2^{k+1}-1}}_{\tau^{[k+1]}} = 0.
\]
Hence $x_0\in\cZ_k$.
\end{proof}

We proceed with the following observation.

\begin{lemma}[\!\!{\cite[Lemma 4.2]{host-kra}}]\label{lem:NotNstar}
Fix $k\geq 1$. Then for every $x\in N$ and $y\in N^{[k]^*}$, we have
\[
\tau^{[k]}(x\ot y) = \tau(E_{\cZ_{k-1}}(x)E_{\cJ^{[k]^*}}(y)),
\]
where we view $E_{\cJ^{[k]^*}}(y)\in \cJ^{[k]^*} \cong\cZ_{k-1}\subset N$.
\end{lemma}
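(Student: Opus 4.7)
The plan is to reduce $y$ to its $\cT_*^{[k]}$-invariant part $E_{\cJ^{[k]^*}}(y)$ using the invariance from Lemma~\ref{lem:invTk-1}, and then use Proposition~\ref{prop:zerocoord} to convert the resulting tensor into a product living on the $0$-coordinate.

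First, each $T \in \cT_*^{[k]}$ acts trivially on the $0$-coordinate (since the generating sides avoid $0$), so $T(x \otimes y) = x \otimes Ty$. By Lemma~\ref{lem:invTk-1} and $\cT_*^{[k]} \subseteq \cT_{k-1}^{[k]}$, the state $\tau^{[k]}$ is $\cT_*^{[k]}$-invariant, hence $\tau^{[k]}(x \otimes y) = \tau^{[k]}(x \otimes Ty)$ for every $T$, and the same holds for convex averages. The group $\cT_*^{[k]}$ is amenable (being a quotient of $\Gamma^k$, with the $k$ sides not containing $0$ giving commuting copies of the abelian $\Gamma$), so the mean ergodic theorem together with weak-$*$ compactness of the norm-bounded orbit $\{Ty\} \subseteq N^{[k]^*}$ yields convex averages of $Ty$ converging to $E_{\cJ^{[k]^*}}(y) \in \cJ^{[k]^*}$. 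Passing to the limit gives
\[
\tau^{[k]}(x \otimes y) = \tau^{[k]}(x \otimes E_{\cJ^{[k]^*}}(y)).
\]

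Next, since $\cT_*^{[k]}$ fixes the $0$-coordinate, $1 \otimes E_{\cJ^{[k]^*}}(y) \in \cJ^{[k]}$, so Proposition~\ref{prop:zerocoord} supplies $z_0 \in N$ with $1 \otimes E_{\cJ^{[k]^*}}(y) = z_0 \otimes 1^{\otimes 2^k - 1}$ in $L^2(N^{[k]}, \tau^{[k]})$; by the definition of $\cZ_{k-1}$, $z_0 \in \cZ_{k-1}$, and it realizes $E_{\cJ^{[k]^*}}(y)$ under the identification $\cJ^{[k]^*} \cong \cZ_{k-1}$. Left-multiplying this $L^2$-identity by $x \otimes 1^{\otimes 2^k - 1}$ (a bounded operator on $L^2$) gives $x \otimes E_{\cJ^{[k]^*}}(y) = xz_0 \otimes 1^{\otimes 2^k - 1}$ in $L^2$. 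A straightforward induction on $k$, using $E_{\cI^{[j]}}(1) = 1$ and the recursive definition of $\tau^{[j+1]}$, shows $\tau^{[k]}(w \otimes 1^{\otimes 2^k - 1}) = \tau(w)$ for any $w \in N$, so
\[
\tau^{[k]}(x \otimes E_{\cJ^{[k]^*}}(y)) = \tau(xz_0) = \tau(E_{\cZ_{k-1}}(x)\, z_0) = \tau(E_{\cZ_{k-1}}(x)\, E_{\cJ^{[k]^*}}(y)),
\]
the middle equality using the bimodule and trace-preserving properties of $E_{\cZ_{k-1}}$ together with $z_0 \in \cZ_{k-1}$.

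The main technical delicacy is the first step: $\tau^{[k]^*}$ need not be tracial on $N^{[k]^*}$, so the existence of the conditional expectation $E_{\cJ^{[k]^*}}$ is not automatic. However, only $\cT_*^{[k]}$-invariance (not traciality) of $\tau^{[k]^*}$ is needed to make the Koopman representation unitary and apply the mean ergodic theorem, and $E_{\cJ^{[k]^*}}(y) \in \cJ^{[k]^*}$ can be extracted as a weak-$*$ cluster point of Følner averages in the norm-bounded orbit. All remaining manipulations occur purely at the $L^2$-level and are unaffected by non-traciality.
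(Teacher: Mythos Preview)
Your proof is correct and follows essentially the same route as the paper: invoke $\cT_*^{[k]}$-invariance of $\tau^{[k]}$ (from Lemma~\ref{lem:invTk-1}) to replace $y$ by its invariant projection $E_{\cJ^{[k]^*}}(y)$, then use the identification $\cJ^{[k]^*}\cong\cZ_{k-1}$ to move everything to the $0$-coordinate, and finally use that $\tau^{[k]}$ restricts to $\tau$ there. The only cosmetic differences are that the paper obtains $E_{\cJ^{[k]^*}}(y)$ as the unique element of minimal norm in the closed convex hull of the orbit in $L^2(N^{[k]^*},\tau^{[k]^*})$ rather than via F\o lner averages and the mean ergodic theorem, and it invokes the identification $\cJ^{[k]^*}\cong\cZ_{k-1}$ directly rather than citing Proposition~\ref{prop:zerocoord}; your more explicit discussion of amenability of $\cT_*^{[k]}$ and of the non-traciality issue is a welcome clarification but does not change the argument.
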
  
\begin{remark}
    Intuitively, we can view this result as stating that $(N^{[k]},\tau^{[k]})$ is the relatively independent joining of $(N,\tau)$ and $(N^{[k]^*},\tau^{[k]^*})$ over $\cZ_{k-1}$ (which is identified with $\cJ^{[k]^*}\subset N^{[k]^*}$).
\end{remark}
\begin{proof}
The proof is essentially an adaptation of the proof of \cite[Lemma 4.2]{host-kra}. Let $x\in N$, and $y\in N^{[k]^*}$. By Lemma \ref{lem:invTk-1}, 
\[
\tau^{[k]}(x\ot y) = \tau^{[k]}(x\ot \sigma(y))
\]
for every $\sigma\in \cT_*^{[k]}$. Therefore
\[
\tau^{[k]}(x\ot y) = \tau^{[k]}(x\ot E_{\cJ^{[k]^*}}(y)),
\]
since $E_{\cJ^{[k]^*}}(y)$ is the unique element of minimal norm in the closure of the convex hull of $\{\sigma(y)\mid \sigma\in\cT_*^{[k]}\}\subset L^2(N^{[k]^*},\tau^{[k]^*})$. Since $\cZ_{k-1}$ and $\cJ^{[k]^*}$ are identified, $\tau^{[k]}|_{N\ot 1^{\ot 2^k-1}} = \tau\ot 1^{\ot 2^k-1}$, and $\tau^{[k]}|_{1\ot N^{\ot 2^k-1}} = 1\ot \tau^{[k]^*}$, we thus get
\begin{align}\label{eq:NotNstar}
\begin{split}
	\tau^{[k]}(x\ot y) &= \tau^{[k]}(x\ot E_{\cJ^{[k]^*}}(y))\\
	&= \tau^{[k]}(xE_{\cJ^{[k]^*}}(y)\ot 1^{\ot 2^k-1})\\
	&= \tau(xE_{\cJ^{[k]^*}}(y))\\
	&= \tau(E_{\cZ_{k-1}}(x)E_{\cJ^{[k]^*}}(y)).
\end{split}
\end{align}
\end{proof}

Similar to the commutative setting in \cite{host-kra}, we will next characterize the subsystem $\cZ_k$ in terms of a Gowers--Host--Kra seminorm, which we first extend to the von Neumann algebraic framework. For $\epsilon\in V_k$, we write $\abs{\epsilon}\coloneqq \epsilon_1+\epsilon_2+\ldots+\epsilon_k$.

\begin{definition}\label{def-seminorm}
    Fix $k\geq 1$, and $x\in N$. We define
	\[
	\tnorm{x}_k \coloneqq \tau^{[k]}\Big(\bigotimes_{\epsilon\in V_k} C^{\abs{\epsilon}}(x)\Big)^{\!1/2^k},
	\]
	where $C(x)=x^*$ is the adjoint operator, that is,  $C^n(x)=x$ if $n$ is even and $C^n(x)=x^*$ if $n$ is odd.
\end{definition}

First of all, we note that by the definition of $\tau^{[k]}$,
\[
\tau^{[k]}\Big(\bigotimes_{\epsilon\in V_k} C^{\abs{\epsilon}}(x)\Big) = \tau^{[k-1]}\Big(E_{\cI^{[k-1]}}\Big(\bigotimes_{\eta\in V_{k-1}} C^{\abs{\eta}}(x)\Big)E_{\cI^{[k-1]}}\Big(\bigotimes_{\eta\in V_{k-1}} C^{\abs{\eta}}(x)^*\Big)\Big) \geq 0,
\]
and thus $\tnorm{x}_k$ is well-defined.

\begin{lemma}[\!\!{\cite[Lemma 3.9]{host-kra}}]\label{lem:proptnorm}
	Fix $k\geq 1$. The following hold.
	\begin{itemize}
		\item[(i)] When for each $\epsilon\in V_k$, we have $x_\epsilon\in N$, then
		\[
		\Big|\tau^{[k]}\Big(\bigotimes_{\epsilon\in V_k} x_\epsilon\Big)\Big| \leq \prod_{\epsilon\in V_k} \tnorm{x_\epsilon}_k.
		\]
		\item[(ii)] $\tnorm{.}_k$ is a seminorm on $(N,\tau)$.
		\item[(iii)] For any $x\in N$, $\tnorm{x}_k\leq \tnorm{x}_{k+1}$.
	\end{itemize}
\end{lemma}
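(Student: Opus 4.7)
The three items are established along the lines of the classical argument \cite[Lemma 3.9]{host-kra}, with (iii) and (ii) being routine consequences of the definitions plus Cauchy--Schwarz, while (i) is the heart of the matter.

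For (iii), set $Z = \bigotimes_{\eta\in V_k} C^{|\eta|}(x) \in N^{[k]}$. Writing $\epsilon\in V_{k+1}$ as $(\eta,i)$ with $\eta\in V_k$, $i\in\{0,1\}$, and using $|\epsilon|=|\eta|+i$, one observes that $\bigotimes_{\epsilon\in V_{k+1}} C^{|\epsilon|}(x) = Z \otimes Z^*$, and unfolding the definition of $\tau^{[k+1]}$ gives
\[
\tnorm{x}_{k+1}^{2^{k+1}} = \tau^{[k]}\bigl(E_{\cI^{[k]}}(Z)\,E_{\cI^{[k]}}(Z)^*\bigr),
\]
while $\tnorm{x}_k^{2^k} = \tau^{[k]}(Z) = \tau^{[k]}(E_{\cI^{[k]}}(Z))$ by the trace-preserving property of $E_{\cI^{[k]}}$. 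The Cauchy--Schwarz inequality for the state $\tau^{[k]}$, applied to the pair $(E_{\cI^{[k]}}(Z),1)$, then yields $\tnorm{x}_k^{2^{k+1}} \leq \tnorm{x}_{k+1}^{2^{k+1}}$, hence (iii).

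For (i), the plan is to carry out $k$ successive rounds of Cauchy--Schwarz, each along one coordinate direction of $V_k$. The first round, splitting $V_k$ along the $k$-th coordinate, uses the defining joining identity $\tau^{[k]}(\bigotimes_\epsilon x_\epsilon) = \tau^{[k-1]}(E_{\cI^{[k-1]}}(\bigotimes_\eta x_{\eta 0})\,E_{\cI^{[k-1]}}(\bigotimes_\eta x_{\eta 1}))$ together with the Cauchy--Schwarz inequality for the state $\tau^{[k-1]}$, the identity $E_{\cI^{[k-1]}}(y)^* = E_{\cI^{[k-1]}}(y^*)$, and the tracial character of $\tau^{[k-1]}$ on $\cI^{[k-1]}$, to bound $|\tau^{[k]}(\bigotimes_\epsilon x_\epsilon)|^2$ by a product of two $\tau^{[k]}$-expressions of the form $\tau^{[k]}\bigl(\bigotimes_\eta x_{\eta i}\otimes \bigotimes_\eta x_{\eta i}^*\bigr)$ for $i=0,1$. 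To iterate this step in the remaining $k-1$ coordinates, one needs the \emph{axis-symmetry} of $\tau^{[k]}$: the fact that, although $\tau^{[k]}$ is constructed by processing the last coordinate first, it also arises as the relatively independent joining of $(N^{[k-1]},\tau^{[k-1]})$ with itself over $\cI^{[k-1]}$ along \emph{any} coordinate axis of $V_k$. Granting this, $k$ rounds of Cauchy--Schwarz produce the claimed bound $|\tau^{[k]}(\bigotimes_\epsilon x_\epsilon)|^{2^k}\leq \prod_\epsilon \tnorm{x_\epsilon}_k^{2^k}$. Item (ii) is then an easy consequence of (i): positivity and homogeneity are immediate, and the triangle inequality is obtained by expanding $\tnorm{x+y}_k^{2^k}$ into a sum of $2^{2^k}$ terms indexed by choice functions $\phi:V_k\to\{x,y\}$, bounding each by (i), and recognizing the resulting sum as $(\tnorm{x}_k+\tnorm{y}_k)^{2^k}$.

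The principal obstacle is the axis-symmetry of $\tau^{[k]}$ underlying the iterated Cauchy--Schwarz in (i). In the Host--Kra setting this is a classical Fubini-type statement on product measure spaces; in the present non-commutative framework it requires carefully verifying that the tower of conditional expectations $E_{\cI^{[j]}}$ is compatible with the tensor-product decompositions of $N^{[k]}$ along all coordinate axes, and that the iterated joining construction yields a state invariant under all coordinate permutations of the cube $V_k$. I expect this to follow by an inductive adaptation of \cite[Section~3]{host-kra}, with Connes' theorem (already invoked in the very definition of $\tau^{[k]}$) controlling the algebraic tensor products at play.
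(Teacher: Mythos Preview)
Your proposal is correct and follows the same route as the paper, which simply records that ``the proof of \cite[Lemma~3.9]{host-kra} applies mutatis mutandis.'' You are in fact more explicit than the paper: you spell out the Cauchy--Schwarz step for (iii), the iterated Cauchy--Schwarz for (i), and the standard expansion argument for (ii), and you correctly isolate the one point requiring care in the non-commutative setting---the axis-symmetry of $\tau^{[k]}$ needed to peel off coordinates in any order---which the paper does not address separately (its Lemma~\ref{lem:invTk-1} on side-transformation invariance is the relevant ingredient, but the passage from that to full coordinate-permutation symmetry is left implicit).
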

\begin{proof}
The proof of \cite[Lemma 3.9]{host-kra} applies mutatis mutandis.
\end{proof}

We will now establish the following characterization of $\cZ_{k-1}$ in terms of the seminorm $\tnorm{.}_k$.

\begin{lemma}[\!\!{\cite[Lemma 4.3]{host-kra}}]\label{lem:normchar}
	Fix $x\in N$. Then the following are equivalent.
	\begin{enumerate}
		\item $E_{\cZ_{k-1}}(x) = 0$.
		\item $\tnorm{x}_k = 0$.
	\end{enumerate}
\end{lemma}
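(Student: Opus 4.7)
The plan is to derive both implications from two previously established ingredients: Lemma \ref{lem:NotNstar}, which gives the key identity $\tau^{[k]}(x \otimes y) = \tau(E_{\cZ_{k-1}}(x)\, E_{\cJ^{[k]^*}}(y))$ for $x \in N$ and $y \in N^{[k]^*}$, together with the Gowers--Cauchy--Schwarz inequality of Lemma \ref{lem:proptnorm}(i).

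For (1) $\Rightarrow$ (2): since the coordinate at $0 \in V_k$ in the product defining $\tnorm{x}_k^{2^k}$ contributes $C^0(x) = x$, one may write $\tnorm{x}_k^{2^k} = \tau^{[k]}(x \otimes y)$ with $y \coloneqq \bigotimes_{\epsilon \in V_k^*} C^{\abs{\epsilon}}(x) \in N^{[k]^*}$. Lemma \ref{lem:NotNstar} then expresses the right-hand side as $\tau(E_{\cZ_{k-1}}(x)\, E_{\cJ^{[k]^*}}(y))$, which is $0$ as soon as $E_{\cZ_{k-1}}(x) = 0$.

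For (2) $\Rightarrow$ (1), suppose $\tnorm{x}_k = 0$. Lemma \ref{lem:proptnorm}(i) applied with the coordinate at $0$ set to $x$ yields
\[
\Bigl| \tau^{[k]}\bigl(x \otimes \bigotimes_{\epsilon \in V_k^*} x_\epsilon\bigr) \Bigr| \;\leq\; \tnorm{x}_k \prod_{\epsilon \in V_k^*} \tnorm{x_\epsilon}_k \;=\; 0
\]
for every choice of $x_\epsilon \in N$. By linearity the functional $y \mapsto \tau^{[k]}(x \otimes y)$ vanishes on the algebraic tensor product of copies of $N$ indexed by $V_k^*$; I would then extend it to all of $N^{[k]^*}$ using the Cauchy--Schwarz bound $|\tau^{[k]}(x \otimes y)|^2 \leq \tau(x^*x)\, \tau^{[k]^*}(y^*y)$ together with the $\sigma$-weak density of the algebraic tensor product in $N^{[k]^*}$ (normality of $\tau^{[k]}$, which follows inductively from its definition and Connes' theorem applied to the hyperfinite algebras $\cI^{[j]}$, is implicit here). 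Applying Lemma \ref{lem:NotNstar} once more gives $\tau(E_{\cZ_{k-1}}(x)\, E_{\cJ^{[k]^*}}(y)) = 0$ for every $y \in N^{[k]^*}$. Using the identification $\cJ^{[k]^*} \cong \cZ_{k-1}$, I pick $y \in \cJ^{[k]^*}$ corresponding to $E_{\cZ_{k-1}}(x)^* \in \cZ_{k-1}$, so that $E_{\cJ^{[k]^*}}(y) = y$ is identified with this element; this forces $\|E_{\cZ_{k-1}}(x)\|_2^2 = 0$, hence $E_{\cZ_{k-1}}(x) = 0$.

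The only real subtlety is the density/continuity step in (2) $\Rightarrow$ (1) and keeping careful track of the identification between $\cJ^{[k]^*}$ and $\cZ_{k-1}$; modulo these technicalities, the argument reduces to two clean applications of Lemma \ref{lem:NotNstar} bracketing the Gowers--Cauchy--Schwarz estimate.
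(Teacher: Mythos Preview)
Your proof is correct and follows essentially the same route as the paper: both directions hinge on Lemma~\ref{lem:NotNstar}, with Lemma~\ref{lem:proptnorm}(i) supplying the vanishing on elementary tensors in the $(2)\Rightarrow(1)$ direction, followed by a density argument and the identification $\cJ^{[k]^*}\cong\cZ_{k-1}$. The paper phrases the final step as ``$\tau(xw)=0$ for every $w\in\cZ_{k-1}$'' rather than picking a specific $y$, but this is purely cosmetic; your remark about normality of $\tau^{[k]}$ is not really needed, since the Cauchy--Schwarz bound you wrote already gives continuity of $y\mapsto\tau^{[k]}(x\otimes y)$ in the $\tau^{[k]^*}$-2-norm, and Kaplansky density then suffices.
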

\begin{proof}
The proof proceeds essentially as in \cite[Lemma 4.3]{host-kra}, but we provide the details for the reader's convenience.

$(1)\Rightarrow (2)$: Assume $E_{\cZ_{k-1}}(x) = 0$. Using Lemma~\ref{lem:NotNstar} together with the definition of $\tnorm{x}_k$, we immediately get that $\tnorm{x}_k=0$.

$(2)\Rightarrow (1)$: Assume $\tnorm{x}_k=0$. By Lemma \ref{lem:proptnorm}(a), given any $x_\epsilon\in N$ for $\epsilon\in V_k^*$, 
\[
\Big|\tau^{[k]}\Big(x\ot\bigotimes_{\epsilon\in V_k^*} x_\epsilon\Big)\Big| \leq \tnorm{x}_k\cdot\prod_{\epsilon\in V_k^*} \tnorm{x_\epsilon}_k = 0.
\]
By density, it follows that
\[
|\tau^{[k]}(x\ot y)| = 0,
\]
for every $y\in N^{[k]^*}$, and in particular for every $y\in \cJ^{[k]^*}$. Through the identification of $\cJ^{[k]^*}$ and $\cZ_{k-1}$, this implies that $\tau(xw)=0$ for every $w\in \cZ_{k-1}$ and hence $E_{\cZ_{k-1}}(x)=0$.
\end{proof}

\begin{theorem}\label{thm:cZcompact}
For every $k\geq 1$, $\cZ_k$ is a compact extension of $\cZ_{k-1}$.
\end{theorem}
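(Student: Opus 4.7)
The plan is to verify condition (2) of Theorem~\ref{thm:compchar} for the inclusion $\cZ_{k-1}\subset\cZ_k$. Given any nonzero $f\in L^2(\cZ_k)$, the goal is to produce a $\Gamma$-invariant $K\in L^2(\cZ_k)\ot_{\cZ_{k-1}} L^2(\cZ_k)$ with $\langle K\ast_{\cZ_{k-1}} f,f\rangle\neq 0$; this rules out any nonzero $f$ being orthogonal to the span in Theorem~\ref{thm:compchar}(2), so the span is dense in $L^2(\cZ_k)$. The natural candidate is $K_f$, the orthogonal projection of $\eta := f\ot_{\cZ_{k-1}}f^*$ onto the closed subspace of $\Gamma$-invariant vectors. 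A direct computation gives $\|K_f\|^2 = \langle K_f,\eta\rangle = \langle K_f\ast_{\cZ_{k-1}} f,f\rangle$, so the problem reduces to showing $K_f\neq 0$ whenever $f\neq 0$.

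Since $\Gamma$ is countable abelian and hence amenable, the mean ergodic theorem along a F\o lner sequence $(F_n)_n$ yields
\begin{equation*}
\|K_f\|^2 \;=\; \lim_n \frac{1}{|F_n|}\sum_{\gamma\in F_n} \|E_{\cZ_{k-1}}(\sigma_\gamma(f)^* f)\|_2^2.
\end{equation*}
The crux of the argument is to bound this quantity below by a positive multiple of $\tnorm{f}_{k+1}^{2^{k+1}}$. Unfolding the cubic seminorm via the splitting $V_{k+1} = V_k\times\{0,1\}$ and setting $U := \bigotimes_{\eta\in V_k} C^{|\eta|}(f)\in N^{[k]}$, one has
\begin{equation*}
\tnorm{f}_{k+1}^{2^{k+1}} \;=\; \tau^{[k+1]}(U\ot U^*) \;=\; \|E_{\cI^{[k]}}(U)\|_{\tau^{[k]}}^2,
\end{equation*}
and applying the mean ergodic theorem inside $(N^{[k]},\tau^{[k]})$ expresses this quantity as a F\o lner average of traces $\tau^{[k]}(U^*\sigma_\gamma^{[k]}(U))$. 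Iterating Lemma~\ref{lem:NotNstar} (to convert $\cI^{[j]}$-projections into $\cZ_{j-1}$-projections) together with Proposition~\ref{prop:invJoin} (to peel off layers of the cube) will then reduce this cubic integral to an expression dominated by the F\o lner average above. Combined with Lemma~\ref{lem:normchar} at level $k+1$, which gives $\tnorm{f}_{k+1}>0$ for nonzero $f\in L^2(\cZ_k)$ since $E_{\cZ_k}(f)=f\neq 0$, this forces $K_f\neq 0$ and completes the verification.

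The main obstacle is carrying out this comparison of the two F\o lner averages carefully in the noncommutative setting. In the commutative case of \cite{host-kra}, the cubic integrand factors neatly over the vertices and the recursive structure of the seminorms gives the identification immediately. Here, however, in $U^*\sigma_\gamma^{[k]}(U)$ the vertex factors alternate between $f^*\sigma_\gamma(f)$ at even-weight vertices and $f\sigma_\gamma(f)^*$ at odd-weight vertices; these are neither equal nor mutually adjoint, so a careful bookkeeping of the side transformations in $\cT_*^{[k+1]}$ together with repeated use of tracial identities will be required, so that all noncommutativity is absorbed into the $\cZ_{k-1}$-valued conditional expectations already appearing in $\|K_f\|^2$.
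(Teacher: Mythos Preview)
Your proposal has a genuine gap at precisely the step you flag as the ``main obstacle,'' and the paper's proof shows that this detour through F\o lner averages is unnecessary.

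The comparison you want — bounding $\tnorm{f}_{k+1}^{2^{k+1}}$ below by the F\o lner average $\lim_n |F_n|^{-1}\sum_{\gamma}\|E_{\cZ_{k-1}}(\sigma_\gamma(f)^*f)\|_2^2$ — is not established, and it is not clear it can be. Even in the commutative case, Lemma~\ref{lem:NotNstar} applied to $g=\bar f\,\sigma_\gamma(f)$ only yields $\tnorm{g}_k^{2^k}\le \|E_{\cZ_{k-1}}(g)\|_2\cdot C_f$, with a first power of the conditional norm rather than a square; summing and applying Cauchy--Schwarz introduces a factor of $|F_n|^{1/2}$ that does not vanish. In the noncommutative case the situation is worse: the vertex factors $f^*\sigma_\gamma(f)$ and $f\sigma_\gamma(f)^*$ do not combine into a single element $g$ to which one could apply the inductive seminorm formula, so even writing $\tau^{[k]}(U^*\sigma_\gamma^{[k]}(U))$ in a form amenable to Lemma~\ref{lem:NotNstar} is unclear. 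Your appeal to ``careful bookkeeping of side transformations and tracial identities'' is not a proof, and I do not see how to make it one.

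The paper's argument uses exactly the two tools you name — Lemma~\ref{lem:NotNstar} and Proposition~\ref{prop:invJoin} — but assembles them far more directly, with no F\o lner averaging at all. The key observation is that Lemma~\ref{lem:NotNstar} identifies $L^2(N^{[k]},\tau^{[k]})$ with the Connes fusion $L^2(N)\ot_{\cZ_{k-1}} L^2(N^{[k]^*},\tau^{[k]^*})$. Proposition~\ref{prop:invJoin} then says that the $\Gamma$-invariants $L^2(\cI^{[k]})$ lie inside $\cAP^2_{\cZ_{k-1},N}\ot_{\cZ_{k-1}} L^2(N^{[k]^*})$. Now take any $x\in N$ with $\hat x\perp \cAP^2_{\cZ_{k-1},N}$; then $y:=\bigotimes_{\epsilon\in V_k}C^{|\epsilon|}(x)=x\ot\tilde y$ is orthogonal to that subspace, hence $E_{\cI^{[k]}}(y)=0$, whence $\tnorm{x}_{k+1}=0$ by definition, and Lemma~\ref{lem:normchar} gives $E_{\cZ_k}(x)=0$. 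This shows $\widehat{\cZ_k}\subset\cAP^2_{\cZ_{k-1},N}$, and projecting the witnessing zonotope vectors by $E_{\cZ_k}$ (which is $\cZ_{k-1}$-bimodular) shows each $f\in\cZ_k$ is already almost periodic relative to $\cZ_{k-1}$ inside $\cZ_k$. No quantitative comparison of seminorms is needed.
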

\begin{proof}
We will work on the level of the $L^2$-spaces. Consider $\cAP^2\coloneqq\cAP^2_{\cZ_{k-1},N}$, the subspace of relatively almost periodic functions relative to $\cZ_{k-1}$. We need to show that $\widehat{\cZ_k}\subset \cAP^2$.

Fix $x\in N$ and assume $E_{\cAP^2}(\hat x)=0$. Consider $y\coloneqq \bigotimes_{\epsilon\in V_k} C^{\abs{\epsilon}}(x)\in N^{[k]}$. Then 
\[
E_{\cAP^2\ot L^2(N^{[k]^*})}(\hat y)=0,
\]
and hence by Proposition \ref{prop:invJoin} also
\[
E_{\cI^{[k]}}(y)=0.
\]
Hence by the definition of the seminorm $\tnorm{x}_{k+1}$ and of $\tau^{[k+1]}$, we get
\[
\tnorm{x}_{k+1}^{2^{k+1}} = \tau^{[k+1]}\Big(\bigotimes_{\epsilon\in V_{k+1}} C^{\abs{\epsilon}}(x)\Big) 
= \tau^{[k]}(E_{\cI^{[k]}}(y)E_{\cI^{[k]}}(y)^*) = 0.
\]
Applying Lemma \ref{lem:normchar}, we thus get $E_{\cZ_k}(x)=0$, which finishes the proof of the theorem.
\end{proof}

We finish this section by noting that in general, $\cZ_{k}$ is not the maximal compact extension of $\cZ_{k-1}$, since this is already the case in the commutative setting (see \cite{host-kra}). However, it is always true that $\cZ_1$ is the maximal compact subsystem of the original system. The above therefore gives an alternative construction of this maximal compact subsystem.

\begin{proposition}\label{prop:Z1max}
$\cZ_1$ is the maximal compact subsystem of the ergodic action $\Gamma\act^\sigma (N,\tau)$.
\end{proposition}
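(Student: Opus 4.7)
The plan is to prove both inclusions $\cZ_1\subset\cAP_{\C,N}$ and $\cAP_{\C,N}\subset\cZ_1$; note that $\cAP_{\C,N}$ is a von Neumann subalgebra of $N$ by Proposition~\ref{prop:quasireg} (since $\C\subset N$ is trivially quasi-regular) and is by construction the maximal compact subsystem. For the first inclusion, I would start by identifying $\cZ_0=\C$: unwinding definitions, $\cZ_0\cong\cJ^{[1]^*}\subset N^{[1]^*}=N$, and on this single copy of $N$ the group $\cT_*^{[1]}$ acts as $\Gamma$, so ergodicity forces $\cJ^{[1]^*}=N^\Gamma=\C$. Theorem~\ref{thm:cZcompact} with $k=1$ then gives that $\cZ_1$ is a compact extension of $\C$, hence $\cZ_1\subset\cAP_{\C,N}$.

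For the reverse inclusion, the plan is to exploit the Peter--Weyl-type decomposition of the compact unitary representation of $\Gamma$ on $\cAP_{\C,N}^2$: since $\Gamma$ is a countable discrete abelian group, every almost periodic vector is an $L^2$-sum of character eigenvectors, so
\[
\cAP_{\C,N}^2=\bigoplus_{\chi\in\widehat\Gamma}V_\chi,\qquad V_\chi=\{\xi\in L^2(N)\mid \sigma_\gamma\xi=\chi(\gamma)\xi\ \forall\gamma\in\Gamma\}.
\]
It thus suffices to show $V_\chi\subset L^2(\cZ_1)$ for every $\chi$, since then intersecting with $N$ will deliver $\cAP_{\C,N}\subset\cZ_1$. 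Given $\xi\in V_\chi$, I would set $\eta=\xi-E_{L^2(\cZ_1)}\xi$, which still lies in $V_\chi$ because $L^2(\cZ_1)$ is $\Gamma$-invariant and so the orthogonal projection commutes with the action; moreover $\eta\perp L^2(\cZ_1)$. The task reduces to showing $\eta=0$.

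The conceptual heart of the argument, and the step I expect to be the main hurdle to carry out cleanly, is the identity $\tnorm{\eta}_2=\|\eta\|_2$ for $\eta\in V_\chi$: once it is established, combining $\eta\perp L^2(\cZ_1)$ with Lemma~\ref{lem:normchar} (extended from $N$ to $L^2(N)$ by $\|\cdot\|_2$-continuity of $\tnorm{\cdot}_2$, itself immediate from the formula displayed next) forces $\tnorm{\eta}_2=0$, and hence $\eta=0$. To prove the identity, I would unfold the definition of $\tau^{[2]}$ to write
\[
\tnorm{\eta}_2^4=\tau^{[1]}\bigl(E_{\cI^{[1]}}(\eta\otimes\eta^*)\,E_{\cI^{[1]}}(\eta\otimes\eta^*)^*\bigr)=\|E_{\cI^{[1]}}(\eta\otimes\eta^*)\|^2_{L^2(N\otimes N)},
\]
and then observe that $\eta\in V_\chi$ forces $\eta^*\in V_{\bar\chi}$, so the diagonal $\Gamma$-action on $L^2(N)\otimes L^2(N)$ fixes $\eta\otimes\eta^*$. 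Consequently $\eta\otimes\eta^*\in L^2(\cI^{[1]})$, the conditional expectation $E_{\cI^{[1]}}$ acts as the identity on it, and $\tnorm{\eta}_2^4=\|\eta\|_2^2\|\eta^*\|_2^2=\|\eta\|_2^4$, as required.
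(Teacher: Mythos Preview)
Your argument is correct, but it proceeds along a different line than the paper's own proof. The first inclusion is handled identically (via $\cZ_0=\C$ and Theorem~\ref{thm:cZcompact}). For the reverse inclusion, the paper does not pass through the Peter--Weyl decomposition at all: instead it takes an arbitrary nonzero $x\in\cAP_{\C,N}$, writes $\tnorm{x}_2^4=\|E_{\cI^{[1]}}(x\otimes x^*)\|_2^2$, and observes that $K\coloneqq E_{\cI^{[1]}}(x\otimes x^*)$ is the minimal-norm element in the closed convex hull of the diagonal orbit of $x\otimes x^*$. If $K=0$, the paper re-runs the second half of the proof of $(1)\Rightarrow(2)$ in Theorem~\ref{thm:compchar} to force $x=0$; hence $K\neq 0$, $\tnorm{x}_2\neq 0$, and Lemma~\ref{lem:normchar} gives $E_{\cZ_1}(x)\neq 0$.

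Your route trades that self-reference to Theorem~\ref{thm:compchar} for an external structural input, namely the decomposition $\cAP^2_{\C,N}=\bigoplus_\chi V_\chi$ valid for abelian $\Gamma$; in exchange you obtain the sharp identity $\tnorm{\eta}_2=\|\eta\|_2$ on eigenvectors, which makes the vanishing argument a one-line computation rather than a contradiction. The paper's proof has the virtue of not using the abelianness of $\Gamma$ at this particular step and of staying within the toolkit already developed; yours is more explicit and arguably cleaner once the spectral decomposition is granted. The two minor points you flag---extending $\tnorm{\cdot}_2$ and Lemma~\ref{lem:normchar} to $L^2(N)$ by continuity, and checking that $\eta^*\in V_{\bar\chi}$ via the modular conjugation---are routine and pose no difficulty.
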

\begin{proof}
Denote by $\cZ\subset N$ the maximal compact subsystem of $\Gamma\act (N,\tau)$, which exists by Proposition \ref{prop:quasireg}. Theorem \ref{thm:cZcompact} implies that $\cZ_1\subset \cZ$. For the converse, take $0\neq x\in \cZ$. It then suffices to show that $E_{\cZ_1}(x)\neq 0$. 
We can compute
\begin{align}
\begin{split}\label{eq:tnormx}
\tnorm{x}_2 &= \tau^{[2]}\Big(\bigotimes_{\epsilon\in V_{2}} C^{\abs{\epsilon}}(x)\Big)\\
&= \tau^{[1]}(E_{\cI^{[1]}}(x\ot x^*)E_{\cI^{[1]}}(x\ot x^*)^*)\\ 
&= (\tau\ot\tau)(E_{\cI^{[1]}}(x\ot x^*)E_{\cI^{[1]}}(x\ot x^*)^*)
\end{split}
\end{align}
We have that $K\coloneqq E_{\cI^{[1]}}(x\ot x^*)$ is the unique element of minimal norm in the closed convex hull of $\{(\sigma_\gamma\times\sigma_\gamma)(x\ot x^*)\mid \gamma\in\Gamma\}\subset L^2(N)\ot L^2(N)$. Now if $K=0$, we could repeat the second half of the proof of $(1)\Rightarrow (2)$ in Theorem \ref{thm:compchar} to conclude that $x=0$, which would contradict our assumption. Hence $K\neq 0$, and therefore by equation \eqref{eq:tnormx} also $\tnorm{x}_2\neq 0$. Applying Lemma \ref{lem:normchar}, we conclude that $E_{\cZ_1}(x)\neq 0$, which finishes the proof.
\end{proof}

Finally, we observe that we have now completed the proof of Theorem~\ref{thm-hk} from the Introduction:

\begin{proof}[Proof of Theorem~\ref{thm-hk}]
The Theorem follows from a combination of Lemma~\ref{lem:cZincreasing}, Lemma~\ref{lem:normchar}, Theorem~\ref{thm:cZcompact}, and Proposition~\ref{prop:Z1max}.
\end{proof}

\begin{remark}
    Suppose $\Gamma=\mathbb{Z}$ and $N=L^\infty(X,\mu)$ for a standard Borel probability space $(X,\mu)$. A deep structure theorem, independently obtained by Host and Kra \cite{host-kra} and Ziegler \cite{ziegler}, states that the commutative von Neumann algebra system $\cZ_k$ is isomorphic to a direct limit of commutative von Neumann algebra systems $M_n$ associated to translations on nilmanifolds $G_n/\Lambda_n$, where $G_n$ is a nilpotent Lie group of nilpotency class $k$ and $\Lambda_n\leq G_n$ is a cocompact lattice. An important step in the proof of the Host--Kra--Ziegler structure theorem is to identify the system $\cZ_k$ as an iterated tower of skew-products
$$
U_1 \rtimes_{\rho_1} U_2 \rtimes_{\rho_2} \ldots \rtimes_{\rho_{k-1}} U_{k}
$$
for some compact abelian groups $U_1,\dots,U_k$ and $U_i$-valued cocycles $\rho_i$ of type\footnote{The type of a cocycle is a cohomological property, roughly meaning that it lifts to a coboundary on the corresponding cubic system, e.g., for type $i+1$ to the cubic system $(N^{[i+1]},\tau^{[i+1]})$, see \cite[Chapter 18, \S 3]{hk-book} for a precise definition.} $i+1$, where the skew-product operation $\rtimes$ is performed from left to right, and $U_1$ has the structure of a Kronecker system, i.e., an ergodic rotation on the compact abelian group $U_1$. 

Host--Kra--Ziegler type structure theorems have been studied for other countable abelian groups $\Gamma$ as well, for example when $\Gamma=\mathbb{F}_p^\omega$ (the direct sum of infinitely many copies of a finite field of prime order $p$) by Bergelson, Tao, and Ziegler in \cite{btz}, when $\Gamma=\bigoplus_{p\in\mathcal{P}} \mathbb{F}_p$ for a multiset of primes $\mathcal{P}$ by Shalom in \cite{shalom3}, and when $\Gamma$ has bounded torsion by the first author, Shalom, and Tao in \cite{jst-tdsystems}. A satisfactory structure theorem for arbitrary countable abelian groups in the case of systems $\cZ_2$ of order $2$ was established  by the first author, Shalom, and Tao in \cite{jst}. Currently, we do not have a satisfactory Host--Kra--Ziegler type structure theorem for actions of arbitrary countable abelian groups for systems of higher order in the commutative setting. 
\end{remark}

\begin{remark}
Generalizing the above geometric representation theory of Host--Kra--Ziegler subsystems $\cZ_k$ to the non-commutative setting necessarily has to start with the maximal compact part of the system $\cZ_1$, which in the commutative setting corresponds to the Kronecker factor, that is, the subsystem generated by the eigenfunctions. In \cite{olesen}, Olesen, Pedersen, and Takesaki study ergodic actions of compact abelian groups on von Neumann algebras and obtain a classification of the corresponding systems in terms of $2$-cocycles on the dual group. After interpreting the results of \cite{olesen} for $\cZ_1$ for a discrete group action, one possible direction to explore the geometric representation theory of the higher-order subsystems $\cZ_k$ could be to study relative versions of the results in \cite{olesen} such as von Neumann algebra valued $2$-cocycles which could encode the higher-order eigenfunctions.   We hope to report tangible findings in future work. 
\end{remark}

\end{document}